\documentclass[10pt,a4paper]{amsart}

\usepackage{amssymb}
\usepackage{hyperref}
\usepackage{enumitem}
%% The following is commands are used for importing various types of graphics.
%\usepackage{epsfig}  		% For postscript
%\usepackage{epic,eepic}       % For epic and eepic output from xfig

%\usepackage{showkeys}

\newtheorem{thm}{Theorem}[section]
\newtheorem{prop}[thm]{Proposition}
\newtheorem{lem}[thm]{Lemma}

\newtheorem{conj}[thm]{Conjecture} 
\newtheorem{cond}[thm]{Condition}

\theoremstyle{definition}
\newtheorem{definition}[thm]{Definition}

\theoremstyle{remark}
\newtheorem{remark}[thm]{Remark}

\numberwithin{equation}{section}

%%% The following show how to make definitions (also called macros or abbreviations).  
\newcommand{\R}{\mathbb{R}}  % The real numbers.
\newcommand{\Q}{\mathbb{Q}} % The rational numbers.
\newcommand{\Z}{\mathbb{Z}} % The integers
\newcommand{\C}{\mathbb{C}} % The complex numbers.
\newcommand{\h}{\mathbb{H}} % The upper half plane
\newcommand{\G}{\mathbb{G}_\mathrm{m}} % The multiplicative group
\newcommand{\GL}{\mathrm{GL}_2^+(\Q)} % Rational matrices with positive determinant
\newcommand{\SL}{\mathrm{SL}_2(\Z)} % Modular group
\newcommand{\GLR}{\mathrm{GL}_2^+(\R)} % Rational matrices with positive determinant
%\newcommand{\F}{\mathcal{F}} % Fundamental domain
 %projective space
\newcommand{\alg}{\overline{\Q}} % algebraic numbers
\newcommand{\falt}{h_\mathrm{F}} %Faltings height
 %Galois group

%%% Mathematical operators which are used asfunctions and have slightly different spacing when typeset than variables are defined as follows:
%%% \DeclareMathOperator{\dist}{dist} % The distance.
\DeclareMathOperator{\re}{Re} % Real part
\DeclareMathOperator{\im}{Im} % Imaginary part
\DeclareMathOperator{\codim}{codim} % Codimension
 % Zariski defect.
\DeclareMathOperator{\defect}{\delta} % defect
 % weakly special defect
 % Mobius defect

\begin{document}
	
	\title{Multiplicative independence of modular functions}
	\author{Guy Fowler}
	\address{Mathematical Institute, University of Oxford, 
		Oxford, OX2 6GG, United Kingdom.}
	\email{\href{mailto:guy.fowler@maths.ox.ac.uk}{guy.fowler@maths.ox.ac.uk}}
	\urladdr{\url{https://www.maths.ox.ac.uk/people/guy.fowler}}
	\date{\today}
	\thanks{\textit{Acknowledgements}: I would like to thank Ehud Hrushovski and Jonathan Pila for useful discussions on the contents of this paper. I am also very grateful to the referee for their many helpful comments and corrections. This work was supported by an EPSRC doctoral scholarship.}
	%\subjclass[2010]{}
	
	\begin{abstract}
	We provide a new, elementary proof of the multiplicative independence of pairwise distinct $\GL$-translates of the modular $j$-function, a result due originally to Pila and Tsimerman. We are thereby able to generalise this result to a wider class of modular functions. We show that this class includes a set comprising modular functions which arise naturally as Borcherds lifts of certain weakly holomorphic modular forms. For $f$ a modular function belonging to this class, we deduce, for each $n \geq 1$, the finiteness of $n$-tuples of distinct $f$-special points that are multiplicatively dependent and minimal for this property. This generalises a theorem of Pila and Tsimerman on singular moduli. We then show how these results relate to the Zilber--Pink conjecture for subvarieties of the mixed Shimura variety $Y(1)^n \times \G^n$ and prove some special cases of this conjecture.
	\end{abstract}
	
	\maketitle

\section{Introduction}\label{sec:intro}

Let $j \colon \h \to \C$ be the modular $j$-function, where $\h$ denotes the complex upper half plane. A $j$-special point is a complex number $\sigma$ such that $\sigma = j(\tau)$ for some $\tau \in \h$ with $[\Q(\tau) : \Q]=2$. The $j$-special points, often called singular moduli, are the $j$-invariants of elliptic curves with complex multiplication. They are algebraic integers with many interesting arithmetic properties, see e.g. \cite{Cox89}. 

In \cite{PilaTsimerman17}, Pila and Tsimerman investigated the multiplicative properties of $j$-special points. In particular, they established the following result. (A set of numbers $\{x_1, \ldots, x_n\}$ is called multiplicatively dependent if there exist $a_1, \ldots, a_n \in \mathbb{Z}$, not all zero, such that $\prod x_i^{a_i}=1$.)

\begin{thm}\cite[Theorem~1.2]{PilaTsimerman17}\label{thm:singdep}
	Let $n \geq 1$. There exist only finitely many $n$-tuples $(\sigma_1, \ldots, \sigma_n)$ of distinct $j$-special points such that the set $\{\sigma_1, \ldots, \sigma_n \}$ is multiplicatively dependent, but no proper subset of $\{\sigma_1, \ldots, \sigma_n \}$ is multiplicatively dependent.
\end{thm}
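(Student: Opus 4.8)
The plan is to deploy the Pila--Zilber point-counting strategy. It will reduce Theorem~\ref{thm:singdep} to a functional-transcendence input which is, in substance, the multiplicative independence of pairwise distinct $\GL$-translates of $j$ --- the companion result reproved in this paper.

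First I would record some reductions and estimates. Fix one such tuple $(\sigma_1,\dots,\sigma_n)$, with $\sigma_i$ a singular modulus of discriminant $\Delta_i<0$, and set $D=\max_i|\Delta_i|$. Minimality forces the relation lattice $\{b\in\Z^n:\prod_i\sigma_i^{b_i}=1\}$ to have rank exactly $1$ --- any sublattice of $\Z^n$ of rank $\geq 2$ contains a nonzero vector with a vanishing coordinate, which would exhibit a dependent proper subset --- with primitive generator $a=(a_1,\dots,a_n)$ satisfying $a_i\neq 0$ for all $i$. As the relation $\prod_i\sigma_i^{a_i}=1$ is Galois-equivariant, every $\Gal(\alg/\Q)$-conjugate of the tuple again consists of pairwise distinct singular moduli, is again minimal, and satisfies the same relation with exponent vector $a$. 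Since the Galois orbit of such a conjugate tuple surjects onto the Galois orbit of its $i_0$-coordinate, where $|\Delta_{i_0}|=D$, and the latter orbit is the full set of $h(\Delta_{i_0})$ singular moduli of discriminant $\Delta_{i_0}$, the orbit of the tuple has size at least $h(\Delta_{i_0})\gg_\varepsilon D^{1/2-\varepsilon}$ by Siegel's theorem. On the other hand, writing each conjugate as $(j(\tau_1),\dots,j(\tau_n))$ with $\tau_i$ the root of a reduced binary quadratic form of discriminant $\Delta_i$ lying in the standard fundamental domain $\mathcal F$ for $\SL$, each $\tau_i$ has degree $2$ and multiplicative height $\ll D^{1/2}$, so the tuple $\tau=(\tau_1,\dots,\tau_n)$ is an algebraic point of degree $\leq 2^n$ and logarithmic height $\ll\log D$, lying in the set $\mathcal Z_a=\{z\in\mathcal F^n:\prod_i j(z_i)^{a_i}=1\}$.

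Now $\mathcal Z_a$ is definable in $\R_{\mathrm{an},\exp}$ (using that $j|_{\mathcal F}$ is), and as $a$ varies these sets form a definable family. Applying the Pila--Wilkie theorem for algebraic points of bounded degree, uniformly over this family, produces for each $\varepsilon'>0$ a constant and a finite list of block families such that in each $\mathcal Z_a$ all but $\ll_{\varepsilon'}(\log D)^{\varepsilon'}$ of our conjugate points lie on positive-dimensional semialgebraic blocks from that list; a standard complexification --- of a real-analytic arc in a block along which $\prod_i j(z_i)^{a_i}$ is constant --- lets one take such blocks complex algebraic. Since $D^{1/2-\varepsilon}$ eventually exceeds any fixed power of $\log D$, if $D$ is sufficiently large then at least one conjugate $\tau$ lies on such a block, giving a positive-dimensional complex algebraic subvariety $B\subseteq\h^n$ with $\prod_i j(z_i)^{a_i}\equiv 1$ on $B$.

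I would then close the argument by functional transcendence. Because $\overline{j(B)}^{\mathrm{Zar}}$ lies in the proper subvariety $\{\prod_i x_i^{a_i}=1\}$ of $\C^n$, the Ax--Lindemann theorem for $j$ places $B$ inside a proper weakly special subvariety $T\subseteq\h^n$, which is special since $B$ carries CM points. Restricting $\prod_i j(z_i)^{a_i}$ to $T$ and using the decomposition of $T$ into $\GL$-Hecke blocks and constant coordinates, the distinctness of the $\sigma_i$ together with $a_i\neq 0$ for all $i$ yields a nontrivial multiplicative relation $\prod_\ell(j\circ g_\ell)^{a_\ell}=\mathrm{const}$ among pairwise distinct $\GL$-translates of $j$, contradicting their multiplicative independence. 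Hence $D$ is bounded, and as there are only finitely many singular moduli of bounded discriminant, only finitely many such tuples exist. I expect the main obstacle to be precisely this last step: pinning down and applying the right functional-transcendence statement for $j$ --- an Ax--Lindemann, or a mixed modular--multiplicative Ax--Schanuel --- and making rigorous the descent from a weakly special subvariety to an honest relation among $\GL$-translates, where the interaction of the modular and multiplicative structures across several blocks is the subtle point; the complexification of Pila--Wilkie blocks and the Galois lower bound for tuples of mixed discriminant are further, though more routine, ingredients.
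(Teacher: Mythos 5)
Your proposal follows the same overall strategy as the paper's proof of the more general Proposition~\ref{prop:conjtup} (Galois orbit lower bound, Pila--Wilkie, complexification of a block, functional transcendence, and finally the multiplicative independence of pairwise distinct $\GL$-translates of $j$), but there is a genuine gap at the counting step. The sets $\mathcal{Z}_a=\{z\in F_j^n:\prod_i j(z_i)^{a_i}=1\}$ do \emph{not} form a definable family as $a$ ranges over $\Z^n$: the exponent vector runs over an infinite discrete set, and the relation cannot be written uniformly in a real parameter $a$ without choosing branches of $\log j(z_i)$, which introduces an unknown integer multiple of $2\pi i$. So you cannot invoke Pila--Wilkie ``uniformly over this family''; applying it to each $\mathcal{Z}_a$ separately gives a constant depending on $a$, and since nothing in your argument bounds the primitive exponent vector $a$ in terms of $D$ (it could a priori grow with $D$), the comparison ``$\gg D^{1/2-\varepsilon}$ conjugates versus the Pila--Wilkie bound'' is not justified.

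The missing ingredients are exactly the arithmetic estimates the paper feeds into this step: $h(\sigma)\ll_\epsilon\lvert\Delta\rvert^{\epsilon}$ for singular moduli, the degree bounds $\lvert\Delta\rvert^{1/2-\epsilon}\ll[\Q(\sigma):\Q]\ll\lvert\Delta\rvert^{1/2+\epsilon}$, and the Loher--Masser theorem (Lemmas~\ref{lem:htspec}, \ref{lem:degspec}, \ref{lem:htmultdep}), which together allow one to choose the relation with $\lvert a_i\rvert\le c\Delta^{n^2}$. One then makes the relation definable by passing to logarithms $u_i$ in a fundamental domain for $\exp$ and recording the branch integer $s$ with $\sum_i a_iu_i=2\pi i s$, and counts the points $(z,a_1,\dots,a_n,s)$ --- quadratic in $z$, integral in the remaining coordinates, of height polynomial in $\Delta$ --- inside a single definable set, with the exponent vector as the parameter of a definable family as in the proof of Proposition~\ref{prop:conjtup}; at that point the functional-transcendence input is naturally the mixed Ax--Schanuel of Section~\ref{sec:ax} (your appeal to modular Ax--Lindemann alone matches your formulation with the relation fixed equal to $1$, which is precisely the formulation whose definability fails). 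Your endgame (weakly special subvariety, distinctness of the $\sigma_i$ forcing distinct translates, all $a_i\neq0$, contradiction with Theorem~\ref{thm:jind}) is correct and is the same as the paper's. Two minor slips, harmless once the above is fixed: the Pila--Wilkie bound is a power of the multiplicative height, so $\ll D^{c\varepsilon'}$ rather than $(\log D)^{\varepsilon'}$, and the preimages satisfy $H(\tau_i)\le 2\lvert\Delta_i\rvert$ rather than $\ll D^{1/2}$.
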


Observe that the independence of proper subsets and the distinctness of the $\sigma_i$ are required to avoid trivialities. Pila and Tsimerman proved Theorem~\ref{thm:singdep} by an o-minimal counting argument; in particular, the result is ineffective. The critical new ingredient in their proof was a functional multiplicative independence result for pairwise distinct $\GL$-translates of the $j$-function. Here and throughout $\GL$ and its subgroups act on $\h$ by M\"{o}bius transformations. 

Functions $f_1, \ldots, f_n \colon \h \to \C$ are called multiplicatively dependent modulo constants if some relation $\prod f_i^{a_i} = c$ holds for $a_i \in \mathbb{Z}$, not all zero, and $c \in \mathbb{C}$; if no such relation holds, then $f_1, \ldots, f_n$ are multiplicatively independent modulo constants. The functional independence result of Pila and Tsimerman was the following.

\begin{thm}\cite[Theorem~1.3]{PilaTsimerman17}\label{thm:jind}
	Let $g_1, \ldots, g_n \in \GL$. Suppose that the functions $j(g_1z), \ldots, j(g_nz)$ are pairwise distinct. Then $j(g_1z), \ldots, j(g_nz)$ are multiplicatively independent modulo constants.
\end{thm}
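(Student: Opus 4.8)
My plan is to argue by contradiction and exploit the $q$-expansion of $j$ together with the action of the modular group. Suppose $\prod_{i=1}^n j(g_iz)^{a_i} = c$ for integers $a_i$ not all zero and some constant $c$. After clearing denominators I may assume all $a_i \neq 0$ and rewrite this as $\prod_{i \in I} j(g_iz)^{a_i} = \prod_{i \in J} j(g_iz)^{-a_i}$ with $a_i > 0$ on both sides and $I, J$ disjoint. The key analytic input is the behaviour of $j$ near the cusp: writing $q = e^{2\pi i z}$, we have $j(z) = q^{-1} + 744 + O(q)$, so $j$ has a simple pole at every cusp of $\SL$ and no zeros in a neighbourhood of any cusp. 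Each $j(g_iz)$ is a modular function for the congruence subgroup $\Gamma_i = g_i^{-1}\SL g_i \cap \SL$, hence for $\Gamma = \bigcap_i \Gamma_i$; so both sides of the relation are modular functions on $X(\Gamma)$, and equality of these functions forces their divisors on $X(\Gamma)$ to coincide.

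The heart of the matter is then a divisor computation on the modular curve $X(\Gamma)$. The function $j(g_iz)$ on $\h$ equals $j \circ g_i$, so its zeros lie over the point $j = 0$ (the elliptic point of order $3$) and its poles lie over the cusps, pulled back via $g_i$. The crucial observation is that the poles of $j(g_iz)$ occur exactly at the cusps of $X(\Gamma)$ lying over the $g_i$-translate of the cusp $\infty$ of $X(1)$, and if $j(g_iz) \neq j(g_kz)$ as functions then, since $\GL$ acts on cusps through $\mathbb{P}^1(\Q)$ and the $j$-function separates $\SL$-orbits, the polar divisors of $j(g_iz)$ and $j(g_kz)$ are supported on disjoint sets of cusps of $X(\Gamma)$ — more precisely, one can choose a cusp $\mathfrak{c}$ of $X(\Gamma)$ at which $j(g_iz)$ has a pole but every $j(g_kz)$ with $k \neq i$ is finite and nonzero. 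Evaluating the order of vanishing at such a $\mathfrak{c}$ on both sides of $\prod_{i \in I} j(g_iz)^{a_i} = \prod_{i \in J} j(g_iz)^{-a_i}$: if $\mathfrak{c}$ is a pole-cusp of some $j(g_{i_0}z)$ with $i_0 \in I$, the left side has a pole of order $a_{i_0} \cdot (\text{ramification}) > 0$ there while the right side is holomorphic and nonzero, a contradiction; symmetrically for $i_0 \in J$. Hence no such relation exists.

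The main obstacle is justifying the combinatorial claim that the polar divisors can be separated — i.e. that pairwise distinctness of the functions $j(g_iz)$ guarantees the existence, for each $i$, of a cusp of $X(\Gamma)$ over which $j(g_iz)$ blows up but the others stay finite. This requires understanding when $j(g_iz) = j(g_kz)$ identically: this happens iff $g_k g_i^{-1} \in \SL$ (equivalently $g_i, g_k$ lie in the same right coset of $\SL$ in $\GL$), since $j$ is exactly invariant under $\SL$. Given distinctness, $g_i g_k^{-1} \notin \SL$, and one must check that the cusp $g_i \cdot \infty \in \SL\backslash \mathbb{P}^1(\Q)$, regarded through the finer quotient by $\Gamma$, is not simultaneously a cusp where both functions are singular with the same "profile"; a clean way is to pass to a common $g$ such that all $g_i = \gamma_i g$ is false in general, so instead I work directly on $X(\Gamma)$ and use that the pole set of $j(g_iz)$ is $\{$cusps $\tau$ of $\Gamma$ : $g_i\tau \in \SL\cdot\infty\}$, and distinctness of the functions implies these pole sets, with multiplicities, are distinct as divisors — then a short linear-algebra / valuation argument at a suitably chosen cusp yields the contradiction. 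An alternative, perhaps cleaner route avoiding cusp combinatorics: take logarithmic derivatives, $\sum a_i \frac{(j \circ g_i)'}{j \circ g_i} = 0$, and analyse the poles of $(j\circ g_i)'/(j\circ g_i)$, which are simple with residue equal to the pole order of $j \circ g_i$ at each cusp and residue $1$ at each point over $j=0$; comparing residues at a cusp where only $j \circ g_{i_0}$ is singular again forces $a_{i_0} = 0$. I expect the cusp-separation step to be where the real work lies, and it is exactly the ingredient that the later, more elementary argument in this paper is designed to streamline and generalise.
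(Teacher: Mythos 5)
Your overall strategy (compare divisors of the two sides of a putative relation $\prod_i j(g_iz)^{a_i}=c$ on a common modular curve) is reasonable in spirit, but the step you single out as the heart of the argument --- cusp separation of the polar divisors --- is false as stated, and this is a genuine gap rather than a technicality. Each $g_i\in\GL$ maps $\mathbb{P}^1(\Q)$ bijectively onto $\mathbb{P}^1(\Q)$, and $j$ has its (unique) pole at the $\SL$-orbit of $\infty$, i.e.\ at all of $\mathbb{P}^1(\Q)$. Hence $j(g_iz)$ has a pole at \emph{every} cusp of $X(\Gamma)$, for every $i$: the polar supports of the functions $j(g_1z),\ldots,j(g_nz)$ are not disjoint --- they are all equal to the full set of cusps. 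So there is no cusp $\mathfrak{c}$ at which one $j(g_iz)$ blows up while the others are finite and nonzero, and the same objection kills the logarithmic-derivative variant (there is no cusp where only one $j\circ g_{i_0}$ is singular). What does distinguish the functions at a cusp is only the \emph{order} of the pole (governed by the widths, i.e.\ by how $g_i$ interacts with the stabiliser of the cusp), and turning a comparison of pole orders across all cusps into a contradiction is precisely the hard combinatorial content of Pila and Tsimerman's original proof (the ``elaborate tree argument'' this paper is avoiding); it is not a short valuation computation. A smaller inaccuracy: $j(g_iz)=j(g_kz)$ identically iff $g_kg_i^{-1}\in\Q^{\times}\cdot\SL$ (equality of classes in $\mathrm{P}\SL\backslash\mathrm{P}\GL$), not iff $g_kg_i^{-1}\in\SL$.

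The separation idea can be made to work, but at the \emph{zeros} rather than the poles, and this is exactly what the paper does. After writing each $g_i$ (modulo $\SL$ on the left) as $z\mapsto r_iz+s_{i,k}$ with $r_i>0$, $0\le s_{i,k}<1$, one chooses $z$ so that the translate with minimal $r$ and minimal $s$ hits the zero $\zeta_6=e^{\pi i/3}$ of $j$, i.e.\ $r_1z+s_{1,1}=\zeta_6$; two elementary facts about where $j$ can vanish (no zeros with imaginary part $>\sqrt{3}/2$, and none on the segment $\im w=\sqrt 3/2$, $1/2<\re w<3/2$) show that no other translate $r_iz+s_{i,k}$ is a zero of $j$ at this point. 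Since none of the translates has a pole at a point of $\h$, evaluating the relation there forces the corresponding exponent to vanish, and one concludes. If you want to salvage your write-up, you should replace the cusp/polar-divisor separation by this zero-separation argument (or supply the full pole-order bookkeeping of the original Pila--Tsimerman proof, which is substantially more work).
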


The proof of Theorem~\ref{thm:jind} in \cite{PilaTsimerman17} is via an elaborate tree argument. In particular, the method there does not readily generalise to other modular functions. A modular function is a meromorphic function $f \colon \h \to \C$ that is invariant under the action of $\SL$ on $\h$ and also ``meromorphic at the cusp''. The condition of $\SL$-invariance implies that $f$ has a Fourier expansion in terms of the nome $q = e^{2 \pi i z}$; being ``meromorphic at the cusp'' is then equivalent to this Fourier series having the form 
\[f(z)=\sum_{n=-m}^{\infty} a(n)  q^n,\]
for some $a(n) \in \C, m \in \Z$. The function $j$ is a Hauptmodul for the modular functions; that is, a function $f$ is a modular function if and only if $f$ may be written as a rational function (with coefficients in $\C$) of $j$ (see \cite[Theorem~11.9]{Cox89}).

In this paper, we provide a new proof of Theorem~\ref{thm:jind}. This uses just elementary properties of $j$. Notably, the proof generalises in a straightforward way to a wide class of modular functions, and so we establish Theorem~\ref{thm:modind}, a generalisation of Theorem~\ref{thm:jind}. To state this result, we make the following definition. Denote by $F_j$ the standard fundamental domain for the action of $\SL$ on $\h$, given by 
\[F_j = \Big\{ z \in \h \colon -\frac{1}{2} < \mathrm{Re}(z) \leq \frac{1}{2}, \lvert z \rvert \geq 1, \mbox{ and } \lvert z \rvert > 1 \mbox{ for } -\frac{1}{2} < \re(z) < 0 \Big\}.\]
The map $j$ restricts to a bijection $j |_{F_j}
 \colon F_j \to \C$. Therefore, any non-zero modular function $f \colon \h \to \C$ has only finitely many zeros and poles in $F_j$, and further if $f$ is also non-constant, then $f$ has at least one zero or pole in $F_j$.
 
\begin{definition}
	Let $f \colon \h \to \C$ be a non-constant modular function. Enumerate the zeros and poles of $f$ contained in $F_j$ as $w_1, \ldots, w_r$, where $\im(w_i) \leq \im(w_{i+1})$. We say that $f$ satisfies the divisor condition if $w_r + s$ is neither a zero nor a pole of $f$ for all $0 < s < 1$.
\end{definition}

\begin{remark}\label{rmk:div}
	If $\im(w_r) \geq 1$, then the divisor condition holds if $\im(w_r) > \im(w_{r-1})$. 
\end{remark}

Our generalisation of Theorem~\ref{thm:jind} is the following result.

\begin{thm}\label{thm:modind}
	Let $f \colon \h \to \C$ be a non-constant modular function satisfying the divisor condition. Let $g_1, \ldots, g_n \in \GL$. If the functions $f(g_1z), \ldots, f(g_nz)$ are pairwise distinct, then they are multiplicatively independent modulo constants.
\end{thm}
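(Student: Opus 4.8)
The plan is to argue by contradiction. Suppose $\prod_{i=1}^{n} f(g_iz)^{a_i} = c$ with $a_i \in \Z$ not all zero and $c \in \C$. Discarding the indices with $a_i = 0$ (the remaining functions are still pairwise distinct), I may assume $a_i \neq 0$ for every $i$; moreover $c \neq 0$, since $f$ non-constant forces each $f(g_iz) \not\equiv 0$, and a product of non-zero meromorphic functions and their inverses is non-zero. Comparing orders of vanishing at an arbitrary point $p \in \h$ (where the order of the non-zero constant $c$ is $0$), the relation gives $\sum_i a_i \operatorname{ord}_p f(g_iz) = 0$. It therefore suffices to produce a single point $p \in \h$ and an index $i_0$ such that $f(g_{i_0}z)$ has a zero or pole at $p$ while every other $f(g_\ell z)$ is finite and non-zero at $p$: then the sum collapses to $a_{i_0}\operatorname{ord}_p f(g_{i_0}z) \neq 0$, a contradiction.

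The first step is to put the $g_i$ into a convenient shape. Because $f$ is $\SL$-invariant, $f(gz)$ depends only on the left coset $\SL g$, and it is unchanged if $g$ is rescaled by a non-zero rational. Clearing denominators and removing the content, I may replace each $g_i$ by a primitive integer matrix of positive determinant, and then by the upper-triangular Hermite representative of its left $\SL$-coset (as in the theory of Hecke operators). Thus, without loss of generality, $g_iz = \alpha_iz + \beta_i$ with $\alpha_i \in \Q_{>0}$ and $\beta_i \in \Q \cap [0,1)$; and since the $f(g_iz)$ are pairwise distinct, the pairs $(\alpha_i,\beta_i)$ are pairwise distinct (equal pairs give identical functions).

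The second step is to locate the highest singularities. As $f$ is non-constant it has a zero or pole in $F_j$, and the divisor condition guarantees that there is a \emph{unique} such singularity $w_r$ of largest imaginary part; write $Y = \im(w_r)$. Using the standard facts that the $F_j$-representative of an $\SL$-orbit maximises imaginary part within that orbit and that the orbit points attaining the orbit's maximal height form a single $\Z$-translation class, together with the $T$-invariance of the zero/pole set of $f$, one deduces that $Y$ is the maximal imaginary part of any singularity of $f$ in $\h$ and that the singularities of $f$ at height exactly $Y$ are precisely $w_r + \Z$. Transporting this by $z \mapsto \alpha_iz + \beta_i$: every zero or pole of $f(g_iz)$ has imaginary part at most $Y/\alpha_i$, with equality exactly on the horizontal set $L_i := \frac{w_r - \beta_i}{\alpha_i} + \frac{1}{\alpha_i}\Z$.

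Now set $\alpha = \min_i \alpha_i$, $J = \{i : \alpha_i = \alpha\}$, fix $i_0 \in J$, and pick $p \in L_{i_0}$, so $p$ is a zero or pole of $f(g_{i_0}z)$ sitting at height $Y/\alpha$. If $\ell \notin J$ then $\alpha_\ell > \alpha$, so every singularity of $f(g_\ell z)$ has imaginary part $\le Y/\alpha_\ell < Y/\alpha$, whence $f(g_\ell z)$ is finite and non-zero at $p$. If $\ell \in J \setminus \{i_0\}$, then a singularity of $f(g_\ell z)$ at height $Y/\alpha$ would lie in $L_\ell$, but $L_{i_0} \cap L_\ell = \varnothing$: these are cosets of $\frac{1}{\alpha}\Z$ differing by $\frac{\beta_{i_0}-\beta_\ell}{\alpha}$, which is not in $\frac{1}{\alpha}\Z$ because $\beta_{i_0} \neq \beta_\ell$; and since no singularity of $f(g_\ell z)$ can exceed height $Y/\alpha$, again $f(g_\ell z)$ is finite and non-zero at $p$. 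Thus $\operatorname{ord}_p \prod_i f(g_iz)^{a_i} = a_{i_0}\operatorname{ord}_p f(g_{i_0}z) \neq 0$, contradicting $\prod_i f(g_iz)^{a_i} = c \neq 0$. I expect the one genuinely delicate point to be the extraction carried out in the second step — deducing, from the divisor condition together with the boundary geometry of $F_j$ (including the elliptic points $i$ and $\rho$), that the singularities of $f$ of maximal imaginary part form the single coset $w_r + \Z$; everything else is bookkeeping.
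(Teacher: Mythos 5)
Your proof is correct and is essentially the paper's own argument: reduce each $g_i$ to $z \mapsto \alpha_i z + \beta_i$ with $\alpha_i \in \Q_{>0}$ and $\beta_i \in \Q \cap [0,1)$, then exhibit a point at which exactly one of the functions $f(g_iz)$ has a zero or pole, using the highest zero/pole $w_r \in F_j$, the divisor condition to separate translates sharing the minimal dilation, and the maximality of $\im(w_r)$ among all zeros and poles of $f$ to dispose of larger dilations. One remark on the step you flag as delicate: the auxiliary ``standard fact'' that the orbit points of maximal imaginary part form a single $\Z$-translation class is false in general (for a representative $w$ on the open boundary arc of $F_j$ one gets the two classes $w+\Z$ and $-\overline{w}+\Z$), but the statement you actually need --- that every zero or pole of $f$ at height $\im(w_r)$ lies in $w_r+\Z$ --- is immediate from the divisor condition together with the $1$-periodicity of the zero/pole set of $f$, since any such point is $w_r+s$ with $s \in \R$ and every $s \notin \Z$ is excluded; the paper avoids even this by choosing its point so that the smallest shift among the smallest dilation lands on $w_r$, whence only translates $w_r+s$ with $0<s<1$ ever need to be ruled out.
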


The divisor condition is satisfied generically by modular functions (though not by all such functions). In Section~\ref{sec:prod}, we show that the divisor condition is in particular satisfied by all of the (infinitely many) non-constant functions belonging to a certain multiplicative group of modular functions with product formulae which arise as Borcherds lifts of some weakly holomorphic modular forms. This gives us a natural set of examples of modular functions satisfying the divisor condition.

For $f \colon \h \to \C$ a modular function, define, in analogy with the case of $j$, an $f$-special point to be a complex number $\sigma$ such that $\sigma =f(\tau)$ for some $\tau \in \h$ with $[\Q(\tau) : \Q]=2$. If $f$ is a modular function, then there exists a rational function $R \in \C(t)$ such that $f(z) = R(j(z))$. The $f$-special points are then precisely the images under $R$ of the $j$-special points. In particular, $f$-special points correspond to the CM elliptic curves, viewing $f$ as a function on the moduli space of elliptic curves over $\C$ given by the modular curve $Y(1) = \SL \backslash \h$. 

If $f$ satisfies the divisor condition, then we are able to establish the following finiteness result on multiplicatively dependent tuples of $f$-special points, analogous to Theorem~\ref{thm:singdep}. 

\begin{thm}\label{thm:moddep}
	Suppose $f$ is a non-constant modular function satisfying the divisor condition. Let $n \geq 1$. There exist only finitely many $n$-tuples $(\sigma_1, \ldots, \sigma_n)$ of distinct $f$-special points such that the set $\{\sigma_1, \ldots, \sigma_n \}$ is multiplicatively dependent, but no proper subset of $\{\sigma_1, \ldots, \sigma_n \}$ is multiplicatively dependent.
\end{thm}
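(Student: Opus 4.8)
The plan is to follow the Pila–Tsimerman strategy for singular moduli (Theorem~\ref{thm:singdep}), now powered by the functional statement Theorem~\ref{thm:modind} in place of Theorem~\ref{thm:jind}. First I would reduce to a counting problem in a suitable definable set. Write $f(z) = R(j(z))$ for a fixed $R \in \C(t)$. An $n$-tuple of distinct $f$-special points $(\sigma_1,\dots,\sigma_n)$ arises from points $\tau_1,\dots,\tau_n \in F_j$, each quadratic, and the multiplicative relation $\prod \sigma_i^{a_i} = 1$ translates, after applying the exponential, into a linear relation among the values $\log f(\tau_i)$. The key Galois-theoretic input, exactly as in \cite{PilaTsimerman17}, is that the Galois orbit of a tuple of singular moduli of discriminants $\Delta_1,\dots,\Delta_n$ is large — polynomially in $\max_i \lvert\Delta_i\rvert$ — provided the discriminants are not too far apart, and one uses Siegel's lower bound on class numbers together with the field-of-definition results for the $\tau_i$ to control the heights and degrees. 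Points with wildly differing discriminants are handled separately by an isogeny/height argument showing the smallest few discriminants must in fact be comparable, or else no minimal relation can hold.

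Next I would set up the o-minimal counting. Consider the definable (in $\R_{\mathrm{an},\exp}$, or already in $\R_{\mathrm{an}}$ after a suitable chart) set
\[
Z = \Big\{ (z_1,\dots,z_n, u_1,\dots,u_n) \in (F_j)^n \times \C^n \ \colon\ u_i = \frac{1}{2\pi i}\log f(z_i) \Big\},
\]
together with the arithmetic condition that the $z_i$ are quadratic and the $u_i$ satisfy an integer linear relation $\sum a_i u_i \in \Z$. A minimal multiplicative dependence among the $\sigma_i$ gives a rational point on a fibre of $Z$ of controlled height and degree (the $a_i$ can be bounded, again as in \cite{PilaTsimerman17}, by a Masser–Wüstholz / linear-forms-in-logarithms estimate or by the elementary bound on multiplicative relations among algebraic numbers of bounded height). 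Applying the Pila–Wilkie counting theorem, if there were infinitely many such minimal tuples, then — since their Galois orbits grow polynomially while Pila–Wilkie forces the rational points to lie on finitely many semialgebraic blocks — there would be a positive-dimensional semialgebraic subset $B \subseteq Z$ on which the linear relation $\sum a_i u_i = \text{const}$ holds identically. Projecting $B$ to the $z$-coordinates and using that the $j(g z)$ for $g \in \GL$ are exactly the branches picked out by such blocks (the Ax–Lindemann-type step: a semialgebraic curve in $\h^n$ along which all the $j(z_i)$ remain modular functions of one another must be a $\GL$-translate curve), I obtain $g_1,\dots,g_n \in \GL$ with $j(g_1 w),\dots,j(g_n w)$ and hence $f(g_1 w),\dots,f(g_n w)$ satisfying a nontrivial relation $\prod f(g_i w)^{a_i} = c$.

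The contradiction then comes from Theorem~\ref{thm:modind}: such a relation forces two of the functions $f(g_i w)$ to coincide, which after unwinding forces $\sigma_i = \sigma_k$ for some $i \neq k$ in infinitely many of the tuples, contradicting distinctness; or else it forces some $a_i = 0$, contradicting minimality. Here one must be slightly careful that distinctness of the $\sigma_i$ really does pass to distinctness of the functions $f(g_i w)$ — this uses that $f$ is non-constant so that $f(g_i w) = f(g_k w)$ identically would already identify $\tau_i$ and $\tau_k$ up to the relevant symmetry, and the divisor condition is exactly what Theorem~\ref{thm:modind} needs to rule out the remaining degenerate possibility.

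I expect the main obstacle to be the Galois-orbit lower bound in the mixed setting: one needs that for a minimal multiplicatively dependent tuple of $f$-special points, the full tuple $(\tau_1,\dots,\tau_n)$ together with the auxiliary logarithm data has a Galois orbit of size growing like a fixed positive power of the height, \emph{uniformly} as the tuple varies, and this requires simultaneously (i) Siegel's class number bound, (ii) an argument that the discriminants $\Delta_i$ cannot spread out arbitrarily — otherwise the smallest-discriminant singular modulus would be a unit-like obstruction to any relation — and (iii) checking that passing from $j$-special points to $f$-special points via the fixed rational map $R$ changes heights and degrees only boundedly. Steps (i) and (iii) are routine; step (ii), the "clustering of discriminants", is the delicate part and is where I would most closely mirror the corresponding lemmas of \cite{PilaTsimerman17}, adapting them to account for the zeros and poles of $R$.
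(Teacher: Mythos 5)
Your outline reproduces, in essence, the paper's proof of the algebraic case (Propositions~\ref{prop:conjtup} and \ref{prop:divconjtup}): arithmetic estimates for $f$-special points, a Loher--Masser bound on the exponents, a Galois-orbit lower bound, Pila--Wilkie counting on a definable set, passage from a semialgebraic block to a complex-algebraic set, a functional-transcendence step producing a weakly special subvariety, and finally a contradiction with Theorem~\ref{thm:modind}. But there is a genuine gap: you write $f(z)=R(j(z))$ with $R\in\C(t)$ and then immediately invoke heights, degrees and Galois orbits of the $\sigma_i=R(j(\tau_i))$. These notions are available, and conjugates of $f$-special points are again $f$-special of the same discriminant (Lemma~\ref{lem:conj}), only when $R$ has algebraic coefficients. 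If $R$ has a transcendental coefficient, the $f$-special points are in general transcendental, so there is no Galois-orbit lower bound (Lemma~\ref{lem:degspec}), no height bound (Lemma~\ref{lem:htspec}), no bound on the exponents via Lemma~\ref{lem:htmultdep}, and no rational/quadratic points to count. Theorem~\ref{thm:moddep}, however, is asserted for arbitrary non-constant $f\in\C(j)$ satisfying the divisor condition, and the paper needs an entire second stage to cover this: a specialisation argument in which one takes the smallest $\alg$-variety $W$ containing the coefficient vector of $R$, reduces $W$ to a curve by Bertini, chooses an algebraic specialisation whose Galois group is large enough (containing a copy of $A_m$) to preserve the non-vanishing and pairwise-distinctness conditions coming from each dependent tuple, applies Lemma~\ref{lem:minmult} to recover minimal dependences after specialisation, and --- crucially --- uses Lemma~\ref{lem:open}, which says the divisor condition persists for specialised parameters on $W$ near the original one. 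Remark~\ref{rmk:divnec} stresses that this last point is exactly where the divisor condition itself (and not merely the functional statement Condition~\ref{conj:ind}) enters; your proposal, which uses the divisor condition only through Theorem~\ref{thm:modind}, cannot see this and therefore proves only the $f\in\alg(j)$ case.

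Two smaller remarks. First, your step (ii), the ``clustering of discriminants'', is neither needed nor part of the argument: the Galois-orbit lower bound is taken with respect to the coordinate of maximal $\lvert\Delta\rvert$, and since every $h(\sigma_i)$ is $O_\epsilon(\Delta^\epsilon)$ and the degrees are polynomial in $\Delta$, the Loher--Masser bound on the exponents and the height of the counted point are already polynomial in the maximal discriminant; no comparability of the discriminants is required, and the claim you propose to prove there is not obviously true. Second, at the functional-transcendence step the paper uses the mixed Ax--Schanuel statements of Section~\ref{sec:ax} (together with the Nash-set argument needed to pass from a real semialgebraic block to a complex-algebraic component); your appeal to an ``Ax--Lindemann-type'' statement for $j$ alone is plausible in spirit, since the multiplicative relation with a fixed constant defines an algebraic subvariety of $Y(1)^n$, but you would still need to justify the real-to-complex algebraization step rather than project a real semialgebraic curve directly.
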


For $f \in \alg(j)$, the proof of this result is via a modification of Pila and Tsimerman's o-minimal counting argument. The extension to $f \in \C(j)$ proceeds by a specialisation argument. We note here that Theorem~\ref{thm:moddep} is ineffective.

Denote by $\G = \G(\C)$ the multiplicative group of complex numbers. An $n$-tuple $(\sigma_1, \ldots, \sigma_n)$ of $f$-special points is multiplicatively dependent if some product $\prod \sigma_i^{a_i}$, with the $a_i \in \Z$ not all zero, lies in the trivial subgroup $\{1\} \leq \G$. A natural extension of Theorem~\ref{thm:moddep} is then to consider, given a fixed subgroup $\Gamma \leq \G$, those $n$-tuples $(\sigma_1, \ldots, \sigma_n)$ of $f$-special points such that, for some $a_i \in \Z$ not all zero, one has $\prod \sigma_i^{a_i} \in \Gamma$. 

When $\Gamma \leq \G$ is of finite rank and $f \in \alg(j)$, we are able to extend Theorem~\ref{thm:moddep} to this setting. Recall that a subgroup $\Gamma \leq \G$ is said to be of finite rank if there exists $\Gamma_0 \subset \Gamma$ such that $\Gamma_0$ is a finitely generated subgroup of $\G$ and, for every $\gamma \in \Gamma$, there exists $m \geq 1$ such that $\gamma^m \in \Gamma_0$. We thereby establish the following result. Here a set $\{x_1, \ldots, x_n\}$ is called $\Gamma$-dependent, for $\Gamma \leq \G$, if there exist $a_1, \ldots, a_n \in \mathbb{Z}$, not all zero, such that $\prod x_i^{a_i} \in \Gamma$.

\begin{thm}\label{thm:gamdep}
	Suppose $f$ is a non-constant modular function satisfying the divisor condition and also $f \in \alg(j)$. Let $\Gamma \leq \G$ be of finite rank and $n \geq 1$. Then there exist only finitely many $n$-tuples $(\sigma_1, \ldots, \sigma_n)$ of distinct $f$-special points such that the set $\{\sigma_1, \ldots, \sigma_n \}$ is $\Gamma$-dependent, but no proper subset of $\{\sigma_1, \ldots, \sigma_n \}$ is $\Gamma$-dependent.
\end{thm}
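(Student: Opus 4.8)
The plan is to bootstrap from Theorem~\ref{thm:moddep} (the case $\Gamma = \{1\}$) using the structure of finite-rank subgroups, following the strategy by which such statements are usually upgraded in the Zilber--Pink/unlikely-intersections literature. First I would reduce to $\Gamma_0$ finitely generated: if $\{\sigma_1,\dots,\sigma_n\}$ is $\Gamma$-dependent via $\prod \sigma_i^{a_i} = \gamma \in \Gamma$, then $\prod \sigma_i^{m a_i} = \gamma^m \in \Gamma_0$ for a suitable $m \geq 1$, so it suffices to prove the finiteness statement with $\Gamma$ replaced by a finitely generated group $\Gamma_0$ (passing to $\Gamma_0$ only enlarges the relevant sets of tuples, and divisibility does not destroy minimality once one is careful — see the obstacle paragraph below). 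Write $\Gamma_0 = \langle \gamma_1, \dots, \gamma_k \rangle$ (we may take the $\gamma_\ell$ multiplicatively independent modulo torsion, and absorb roots of unity separately since $\Gamma$-dependence modulo a torsion element is $\Gamma$-dependence, indeed multiplicative dependence, after raising to a power).

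The key step is then to observe that a $\Gamma_0$-dependent $n$-tuple of $f$-special points gives rise to a genuinely multiplicatively dependent $(n+k)$-tuple $(\sigma_1, \dots, \sigma_n, \gamma_1, \dots, \gamma_k)$, and to run the o-minimal counting argument of Pila--Tsimerman (as modified in the proof of Theorem~\ref{thm:moddep}) on this enlarged tuple, with the crucial point that the extra coordinates $\gamma_1, \dots, \gamma_k$ are \emph{fixed} algebraic numbers rather than varying special points. In the counting setup this means one works in $Y(1)^n \times \G^{n+k}$, but only the first $n$ factors of $\G^{n+k}$ carry a modular-type parametrisation; the last $k$ coordinates contribute a bounded (indeed constant) amount to heights and degrees. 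Concretely: the Galois orbit of $(\sigma_1, \dots, \sigma_n)$ still has size growing like a power of $\max_i \lvert \mathrm{disc}(\sigma_i)\rvert$, the relation $\prod_{i=1}^n \sigma_i^{a_i} = \prod_{\ell=1}^k \gamma_\ell^{b_\ell}$ persists along the Galois orbit once we bound the exponents $a_i, b_\ell$ (using a Bogomolov/lower-bound-for-heights input exactly as in \cite{PilaTsimerman17} to cap the $a_i$, together with capping the $b_\ell$ via the archimedean sizes and the product formula), and the heights of the relevant points in the parametrising domain grow only polynomially. Then the Pila--Wilkie counting theorem applied to the definable set cut out by the $f$-modular functions forces the point to lie on a positive-dimensional semialgebraic block, and the functional transcendence input — here Theorem~\ref{thm:modind} together with the Ax--Schanuel / Ax--Lindemann-type statement for $j$ already used in the proof of Theorem~\ref{thm:moddep} — shows such a block can only arise when some proper sub-tuple is already $\Gamma_0$-dependent (or two of the $\sigma_i$ coincide), contradicting minimality. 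The finiteness conclusion for all large discriminants follows, and the finitely many remaining small cases are handled trivially.

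The main obstacle I expect is the interaction between the divisibility reduction and the minimality hypothesis: a tuple that is minimally $\Gamma$-dependent need not be minimally $\Gamma_0$-dependent, and conversely, so one must phrase the counting argument so that it yields finiteness of \emph{all} $\Gamma$-dependent tuples of a given length $n$ whose coordinates have discriminant above an effective-in-$n$ threshold, rather than only the minimal ones — this is legitimate because minimality is only needed to rule out the ``trivial'' infinite families (repeated coordinates, or a relation supported on a proper subset), and those are exactly what the functional transcendence step excludes. A secondary technical point is bounding the exponents $b_\ell$ in $\prod \gamma_\ell^{b_\ell}$: since the $\gamma_\ell$ are fixed, $\log\lvert\prod \gamma_\ell^{b_\ell}\rvert = \sum b_\ell \log\lvert\gamma_\ell\rvert$ and the various archimedean absolute values give enough linear constraints to bound $\max_\ell \lvert b_\ell\rvert$ in terms of $\max_i \lvert a_i\rvert$ (hence polynomially in the discriminants), unless the $\gamma_\ell$ satisfy an exceptional multiplicative relation, which we excluded by choosing them independent modulo torsion. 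The passage from $f \in \alg(j)$ being essential here (as in Theorem~\ref{thm:gamdep}'s hypothesis) is what lets us keep everything defined over a single number field and use heights; the specialisation trick from Theorem~\ref{thm:moddep} does not obviously survive the introduction of the fixed group $\Gamma$, which is why the statement is restricted to $\alg(j)$.
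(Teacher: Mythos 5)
Your overall strategy is the one the paper actually uses: reduce to a finitely generated group, adjoin its generators as \emph{fixed} coordinates so that a $\Gamma$-dependent $n$-tuple becomes a multiplicatively dependent $(n+k)$-tuple living on a graph subvariety of $Y(1)^n\times\G^{n+k}$, bound the exponents and the heights of the corresponding fundamental-domain preimages polynomially in the maximal discriminant, exploit the Galois orbit (over $K_0=K(b_1,\dots,b_k)$) growing like a power of the discriminant, apply Pila--Wilkie, and then use Ax--Schanuel together with Theorem~\ref{thm:modind} (Condition~\ref{conj:ind}) to turn the resulting positive-dimensional block into a forbidden multiplicative dependence modulo constants among pairwise distinct $\GL$-translates of $f$. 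That is exactly the proof of Proposition~\ref{prop:conjgam} in Section~\ref{sec:finrk}. Two small omissions: you should also replace $\Gamma$ by $\Gamma\cap\alg$ (legitimate because any value $\prod\sigma_i^{a_i}$ is algebraic), since otherwise there is no number field $K_0$ over which to take conjugates or apply height bounds; and the cleaner way to bound \emph{all} exponents, including those on the $b_\ell$, is to apply the Loher--Masser bound (Lemma~\ref{lem:htmultdep}) to a minimal multiplicatively dependent set $S$ with $\{\sigma_1,\dots,\sigma_n\}\subset S\subset\{\sigma_1,\dots,\sigma_n,b_1,\dots,b_k\}$, rather than your archimedean argument, which as stated needs the full $S$-unit log-lattice, not just archimedean places.

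The genuine problem is your handling of the ``main obstacle''. The obstacle is illusory: since $\Gamma_0\leq\Gamma$, a proper subset that is $\Gamma_0$-dependent is a fortiori $\Gamma$-dependent, and raising a $\Gamma$-relation to the $m$th power gives a $\Gamma_0$-relation with exponents still not all zero; hence a minimally $\Gamma$-dependent tuple is automatically minimally $\Gamma_0$-dependent and the reduction preserves exactly the hypothesis you need. Your proposed workaround, namely proving finiteness of \emph{all} $\Gamma$-dependent $n$-tuples whose coordinates have large discriminant, is false: take one minimally dependent pair and pad it with $n-2$ arbitrary $f$-special points of arbitrarily large discriminant (exponents zero on the padding), which still satisfies the definition of $\Gamma$-dependence. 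Minimality cannot be discarded in the counting argument either; it is what guarantees both that Lemma~\ref{lem:htmultdep} applies (all proper subsets of $S$ independent) and, at the endgame, that every exponent $\beta_i$ is non-zero, so that the image under $f$ of the positive-dimensional weakly special $W_1$ really is a relation among \emph{all} the pairwise distinct translates and contradicts Condition~\ref{conj:ind}; the functional transcendence step does not by itself exclude relations supported on proper subsets. So keep the minimality hypothesis throughout and drop the restructuring; with that correction your plan coincides with the paper's argument.
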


The plan of this article is as follows. In Section~\ref{sec:elem}, we give a new proof of Theorem~\ref{thm:jind} and then generalise it to prove Theorem~\ref{thm:modind}. Section~\ref{sec:prod} establishes a natural class of modular functions, arising via Borcherds lifts, to which this theorem applies. In Section~\ref{sec:ax}, we introduce the setting of the mixed Shimura variety $Y(1)^m \times \G^n$ which is used for the remainder of the paper. The proof of Theorem~\ref{thm:moddep} is contained in Section~\ref{sec:tup}. In Section~\ref{sec:ZP}, we relate Theorem~\ref{thm:moddep} to the Zilber--Pink conjecture for $Y(1)^n \times \G^n$. The proof of Theorem~\ref{thm:gamdep} then takes place in Section~\ref{sec:finrk}. Finally, the Zilber--Pink context of Theorem~\ref{thm:gamdep} is considered in Section~\ref{sec:ZPgam}. 

We note here that several proofs in Sections~\ref{sec:tup}--\ref{sec:ZPgam} will use techniques from the study of o-minimality, in particular point counting arguments. By definable we will always mean definable in the o-minimal structure $\R_{\mathrm{an}, \exp}$. We identify (subsets of) $\C$ with (subsets of) $\R^2$ in the natural way. In addition, given a function $f \colon A \to B$ and $n > 1$, we will often, by an abuse of notation, also write $f$ for the function $A^n \to B^n$ given by the Cartesian power of $f$. It should always be clear from context which function is meant. 

\section{Multiplicative independence}\label{sec:elem}

In this section, we give a new proof of Theorem~\ref{thm:jind}, and then show how this method may be extended to prove the more general Theorem~\ref{thm:modind}.

\begin{proof}[Proof of Theorem~\ref{thm:jind}]

Let $g_1, \ldots, g_n \in \GL$, and suppose that the functions $j(g_1z), \ldots, j(g_n z)$ are pairwise distinct. Then the cosets 
\[[g_1], \ldots, [g_n] \in \mathrm{P}\SL \backslash \mathrm{P}\GL\]
are pairwise distinct. For $g \in \GL$, we may, as in the proof of \cite[Proposition~7.1]{Pila14a}, write $g = \gamma h$, where $\gamma \in \SL$ and $h z = r z + s$ for $r, s \in \Q$ with $0 < r$ and $0 \leq s < 1$. The cosets $[g_i] \in \mathrm{P}\SL \backslash \mathrm{P}\GL$ are pairwise distinct if and only if the corresponding linear functionals $r_i z + s_i$ are pairwise distinct. Different $g_i$ may have associated the same $r_i$, so reindex them as $g_{i,k}$, associated with the functional $r_i z + s_{i,k}$, where $r_1 < r_2 <\ldots <r_l$ and $s_{i,k}<s_{i,k'}$ for $k < k'$.

To prove Theorem~\ref{thm:jind}, it is enough to find $z \in \h$ such that $j(r_i z + s_{i,k})=0$ if and only if $(i, k)=(1,1)$. Recall that $j(\zeta_6)=0$, where $\zeta_6 = \exp( \pi i /3)$. Therefore, setting 
\[z = \frac{1}{r_1} (\frac{1}{2}-s_{1,1}) + \frac{1}{r_1}\frac{\sqrt{3}}{2}i \in \h,\]
 so that $r_1 z + s_{1,1} = \zeta_6$, gives that $j(r_1 z +s_{1,1})=0$. 
 
 It remains to show that $j(r_i z + s_{i,k}) \neq 0$ for $(i, k) \neq (1,1)$. To do this we use two elementary facts about $j$. First, that for $w$ with $1/2 < \re(w) < 3/2$, if $j(w)=0$, then $\im(w) < \sqrt{3}/{2}$. Second, that $j(w) \neq 0$ whenever $\im(w) > \sqrt{3}/{2}$. Either of these is clear by considering the tessellation of $\h$ by translates of the fundamental domain $F_j$ for the action of $\SL$, since the only zero of $j$ in $F_j$ is at $\zeta_6$.

For $k>1$, note that 
\[\re(r_1 z + s_{1,k}) = \re(r_1z + s_{1,1}) + \re(s_{1,k}-s_{1,1})= \frac{1}{2} + (s_{1,k}-s_{1,1}).\]
Since $0 \leq s_{1,1} < s_{1,k} <1$, we have that $1/2 < \re(r_1 z + s_{1,k}) < 3/2$. Therefore, $j(r_1 z + s_{1,k}) \neq 0$ by the first fact, since $\im(r_1 z + s_{1,k})= \sqrt{3}/2$. Now for $i>1$,
\[\im(r_i z + s_{i,k}) = r_i \im(z) = r_i \frac{1}{r_1} \frac{\sqrt{3}}{2} > \frac{\sqrt{3}}{2}\]
since $r_i > r_1$ for $i > 1$. Therefore, by the second fact, $j(r_i z + s_{i,k}) \neq 0$ for $i > 1$. Hence, $j(r_i z + s_{i,k}) \neq 0$ for $(i, k) \neq (1,1)$.
\end{proof}

Now we show how this method may be generalised to prove Theorem~\ref{thm:modind}. 

\begin{proof}[Proof of Theorem~\ref{thm:modind}]
	Let $f \colon \h \to \C$ be a non-constant modular function. So $f$ is a rational function of $j$. Let $g_1, \ldots, g_n \in \GL$, and suppose that $f(g_1 z), \ldots, f(g_n z)$ are pairwise distinct. In particular, the associated cosets $[g_i] \in \mathrm{P}\SL \backslash \mathrm{P}\GL$ are pairwise distinct, and so, as above, we may rewrite the $f(g_iz)$ as $f(r_i z + s_{i,k})$ for $r_i, s_{i,k} \in \Q$ with $r_i >0$ and $0 \leq s_{i,k}<1$. These linear functionals $r_i z + s_{i,k}$ are again pairwise distinct and we may assume $r_1 < r_2 <\ldots <r_l$ and $s_{i,k}<s_{i,k'}$ for $k < k'$.

The function $f$ is meromorphic on $\h$. To prove the multiplicative independence modulo constants of the functions $f(r_i z + s_{i,k})$, it will be enough to find $z \in \h$ such that $r_i z + s_{i,k}$ is either a zero or a pole of $f$ if and only if $(i,k)=(1,1)$. Since $f$ is a rational function of $j$, and $j|_{F_j} \colon F_j \to \C$ is bijective, the function $f$ has only finitely many zeros and poles in $F_j$. Further, $f$ has at least one zero or pole in $F_j$ since $f$ is non-constant. 

Enumerate the zeros and poles of $f$ in $F_j$ as $w_1, \ldots, w_r$, where $\im(w_i) \leq \im(w_{i+1})$. We may then proceed for $f$ as we did in the above proof for $j$, replacing $\zeta_6$ by $w_r$, provided that
\begin{enumerate}
	\item $w_r + s$ is neither a zero nor a pole of $f$ for all $0 < s < 1$, and
	\item $f$ has no zero or pole with imaginary part $> \im (w_r)$.
\end{enumerate}
The first of these is just the divisor condition. For the second, note that if $f$ has a zero or pole in $\h$ with imaginary part $> \im(w_r)$, then $f$ must have a zero or pole in $F_j$ with imaginary part $> \im(w_r)$, as may be seen by considering the tessellation of $\h$ by $\SL$-translates of $F_j$. This cannot happen by the definition of $w_r$, and so we are done.
\end{proof}

\begin{remark}\label{rmk:necessary}
	Suppose $f$ is a non-constant modular function which does not satisfy the divisor condition. Let $g_1, \ldots, g_n \in \GL$ be such that the functions $f(g_1 z), \ldots, f(g_n z)$ are pairwise distinct. For such $g_i$ suitably generic, there will still exist some $z \in \h$ such that $g_i z$ is either a zero or a pole of $f$ for exactly one $i$. Thus the translates $f(g_1 z), \ldots, f(g_n z)$ would still be multiplicatively independent modulo constants. The divisor condition is sufficient for this to be true for all possible choices of the $g_i$, and hence sufficient to establish Theorem~\ref{thm:modind}, but is not obviously necessary. Indeed, there does not appear to be an obvious obstruction to a corresponding functional independence result holding for an arbitrary non-constant modular function. It seems likely that there is a weaker condition which would still suffice to prove Theorem~\ref{thm:modind}.
\end{remark}

\section{Borcherds products for modular functions}\label{sec:prod}

In this section, we show that the divisor condition is satisfied by the non-constant elements of a natural class of modular functions, which arises as the set of Borcherds lifts of certain weakly holomorphic modular forms. We thereby establish multiplicative independence for non-constant functions belonging to this set. We introduce the following notation, after \cite{Borcherds95a}.

Let $A$ be the set of weakly holomorphic modular forms $f$ of weight $1/2$ on $\Gamma_0(4)$ which have a Fourier series of the form $f(z) = \sum c(n)q^n$, where $c(n) \in \Z$ are such that $c(n)=0$ unless $n \equiv 0, 1 \bmod 4$. (Weakly holomorphic means that we allow $f$ possibly to have poles at the cusps.) It is clear that the elements of the set $A$ form an additive group.

Let $B$ be the set of integer weight meromorphic modular forms for some character of $\SL$, all of whose zeros and poles are located at either cusps or imaginary quadratic numbers, and that have Fourier expansions with integer coefficients and leading coefficient $1$. The elements of $B$ form a multiplicative group.

Denote by $H(n)$ the Hurwitz class number of discriminant $-n$ when $n>0$ and set $H(0)=-1/12$. Then let the function $\tilde{H}$ be defined by
\[\tilde{H}(z) = \sum_{n\geq0} H(n)q^n = -\frac{1}{12} + \frac{q^3}{3}+\frac{q^4}{2}+q^7+\ldots.\]
Borcherds established the following isomorphism.

\begin{thm}\cite[Theorem~14.1]{Borcherds95}\label{thm:borch}
	Let $\Psi \colon A \to B$ be given by 
	\[f(z) = \sum a(n) q^n \mapsto \Psi(f(z)) = q^{-h} \prod_{n>0}(1-q^n)^{a(n^2)},\]
	where $h$ is the constant coefficient in the Fourier series of $f(z)\tilde{H}(z)$. Then $\Psi$ is an isomorphism from the additive group $A$ to the multiplicative group $B$. Further, the weight of the meromorphic modular form $\Psi(f)$ is equal to $a(0)$, and the multiplicity of the zero of $\Psi(f)$ at an imaginary quadratic number $\tau \in \h$ of discriminant $\Delta$ is equal to
	\[\sum_{n>0} a(n^2 \Delta).\]
	Here the discriminant $\Delta$ of an imaginary quadratic number $\tau$ is defined as $\Delta = b^2-4ac$, where $a,b,c \in \Z$ satisfy $a \tau^2 + b \tau + c= 0$ and $\gcd(a,b,c)=1$.
\end{thm}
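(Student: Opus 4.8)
The theorem is due to Borcherds, and the natural route to a proof is the regularised singular theta lift. First I would fix an even lattice $L$ of signature $(2,1)$ — concretely, integral binary quadratic forms with the discriminant quadratic form — so that its Grassmannian of negative lines is identified with $\h$ and the special divisors attached to lattice vectors of negative norm are exactly the $\SL$-orbits of imaginary quadratic points, indexed by their discriminants $\Delta<0$. To $f=\sum a(n)q^n\in A$ one attaches the Siegel theta function $\theta_L$ of $L$, a non-holomorphic modular form of weight $1/2$ on $\Gamma_0(4)$ valued in functions on $\h$, and forms the regularised integral
\[\Phi(z,f)=\int_{\SL\backslash\h}^{\mathrm{reg}}f(\tau)\,\overline{\theta_L(\tau,z)}\,\frac{dx\,dy}{y^2},\]
the regularisation (in the sense of Harvey--Moore and Borcherds) being forced on us because $f$ has a pole at the cusp. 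The target identity, which turns the theorem into a statement about this explicit integral, is that away from the special divisors $\Phi(z,f)=-4\log\bigl|\Psi(f)(z)\,(\im z)^{a(0)/2}\bigr|$ up to an additive constant.

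The main computation is then the Fourier expansion of $z\mapsto\Phi(z,f)$ for $\im z$ large, carried out by Rankin--Selberg unfolding of the theta integral. Unfolding against the parabolic part of $\theta_L$ isolates the constant term; this is precisely where Zagier's weight $3/2$ Eisenstein series $\tilde{H}$ with Hurwitz class number coefficients enters, and it yields both that the weight of the output equals $a(0)$ and that the leading exponent of the product equals $-h$ with $h$ the constant term of $f\tilde{H}$. Unfolding the remaining terms produces, for each $n\geq1$, a contribution equal to $a(n^2)$ copies of $\log|1-q^n|$, which exponentiates to the factor $(1-q^n)^{a(n^2)}$; in particular the infinite product converges for $\im z\gg0$ and its logarithmic modulus agrees with $\Phi$ there.

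Next I would upgrade this from $|\Psi(f)|$ to $\Psi(f)$ itself. Since $\theta_L$ is an eigenfunction of the invariant Laplacian on the Grassmannian, $\Phi(\cdot,f)$ is harmonic on $\h$ away from the special divisors, with only logarithmic singularities along them; a real-analytic function of this shape is locally $-2\log$ of the modulus of a meromorphic function, and the $\SL$-equivariance of the lift (inherited from the invariance of the integral over the fundamental domain) forces the resulting global meromorphic object to transform as a modular form of weight $a(0)$ for a character of $\SL$, necessarily with integer Fourier coefficients and leading coefficient $1$, i.e. an element of $B$. Computing the singularity of $\Phi$ transversally to a special divisor of discriminant $\Delta$ — a one-variable calculation with the theta kernel — gives that the multiplicity of $\Psi(f)$ there equals $\sum_{n>0}a(n^2\Delta)$, so that all zeros and poles lie at cusps or imaginary quadratic points. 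This shows $\Psi$ maps $A$ into $B$ with the asserted weight and divisor, and that it is a homomorphism is immediate from the product formula.

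It remains to prove bijectivity. For injectivity, if $\Psi(f)=1$ then its weight $a(0)$ and its leading exponent $h$ both vanish, and its trivial divisor forces, by M\"obius inversion over discriminants, every principal-part coefficient $a(-d)$ to vanish; since a form in $A$ is determined by its principal part and $M^+_{1/2}(\Gamma_0(4))=\C\theta$ with $\Psi(\theta)=\eta^2$ of weight $1$, one concludes $f=0$. For surjectivity, given $g\in B$, its divisor is a finite $\Z$-linear combination of $\SL$-orbits of imaginary quadratic points, and Borcherds's obstruction theorem (here the only obstruction comes from $\tilde{H}$) produces $f\in A$ whose associated divisor is that of $g$; then $\Psi(f)/g$ is a weight $0$ modular form for a character of $\SL$ with no zeros or poles, integer Fourier coefficients and leading coefficient $1$, hence equals $1$. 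The hard part is the analytic core: justifying the regularisation of the divergent theta integral, carrying out the unfolding to obtain the Fourier expansion in closed product form, and the transversal singularity computation that pins down the divisor; by contrast, the harmonic-to-holomorphic passage and the structure theory of $A$ are fairly formal once the Fourier expansion is in hand. For the product formula alone one can instead reduce to an explicit $\Z$-basis of $A$ — e.g.\ $\theta$ together with the forms of principal part $q^{-d}$ for $d\equiv0,3\bmod4$ — but verifying convergence and modularity of the associated products still requires essentially the same input.
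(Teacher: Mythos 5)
This statement is not proved in the paper at all: it is Borcherds's theorem, quoted verbatim (with the citation \cite[Theorem~14.1]{Borcherds95}) and used as a black box to show that the elements of $B_0$ satisfy the divisor condition. So there is no proof of the paper's to compare yours against; the only question is whether your sketch is a sound roadmap to the cited result, and broadly it is. What you describe is the now-standard proof via the Harvey--Moore/Borcherds regularised singular theta lift for a signature $(2,1)$ lattice: the identity $\Phi(z,f)=-4\log\lvert\Psi(f)(z)(\im z)^{a(0)/2}\rvert$, the unfolding computation producing the exponents $a(n^2)$ and the constant term (where Zagier's $\tilde H$ enters and fixes both the weight $a(0)$ and the cusp exponent $h$), the harmonic-with-log-singularities argument to recover a meromorphic form with multiplicity $\sum_{n>0}a(n^2\Delta)$ along the discriminant-$\Delta$ divisor, and then injectivity by induction on the principal part together with the Serre--Stark description of holomorphic weight $1/2$ forms, and surjectivity from the vanishing of the obstruction space (there are no weight $3/2$ cusp forms in the plus space for $\Gamma_0(4)$, and the valence formula/pairing with $\tilde H$ matches the weight). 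Two small cautions: your parenthetical that ``the only obstruction comes from $\tilde H$'' is loose, since $\tilde H$ is not a holomorphic weight $3/2$ form and so is not an obstruction to realising a principal part but rather the Eisenstein datum fixing $a(0)$ and $h$; and integrality of the Fourier coefficients of $\Psi(f)$ should be read off from the product expansion (with $a(n^2)\in\Z$) rather than from the analytic construction. Note also that the source actually cited, Borcherds's 1995 paper, organises the argument differently (via his earlier infinite-product machinery), the regularised-theta-lift proof you outline being the later, cleaner treatment; either is an acceptable provenance for the statement as used here.
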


A modular function is a weight $0$ meromorphic modular form for the trivial character of $\SL$. The modular functions in $B$ clearly form a subgroup of the group $B$, which we denote $B_0$. Applying Borcherds's isomorphism, we have that $B_0 \subset \Psi( A_0)$, where $A_0$ is the subgroup of $A$ comprising those functions $f \in A$ which have a Fourier expansion $f(z) = \sum a_f(n) q^n$ with $a_f(0)=0$.

In this section, we prove the following theorem.

\begin{thm}\label{thm:borchind}
	Let $f \in B_0$ be non-constant and $g_1, \ldots, g_n \in \GL$. Suppose that the functions $f(g_1 z), \ldots, f(g_n z)$ are pairwise distinct. Then the functions $f(g_1 z), \ldots, f(g_n z)$ are multiplicatively independent modulo constants.
\end{thm}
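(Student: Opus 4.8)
The plan is to deduce Theorem~\ref{thm:borchind} from Theorem~\ref{thm:modind} by showing that every non-constant $f \in B_0$ satisfies the divisor condition. Since $B_0 \subset \Psi(A_0)$, I would first write $f = \Psi(\tilde{f})$ with $\tilde{f}(z) = \sum a(n) q^n \in A_0$. By Theorem~\ref{thm:borch}, the zeros and poles of $f$ in $\h$ lie only at imaginary quadratic points, and the order of $f$ at such a point of discriminant $\Delta$ equals $m(\Delta) := \sum_{n>0} a(n^2 \Delta)$, which depends only on $\Delta$. Let $\mathcal{D} = \{\Delta : m(\Delta) \neq 0\}$; as $f$ is non-constant with only finitely many zeros and poles in $F_j$, this set is finite and non-empty. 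Put $D = \max\{\lvert\Delta\rvert : \Delta \in \mathcal{D}\}$ and $\Delta_0 = -D$.

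Next I would pin down the topmost zero or pole of $f$ in $F_j$. An imaginary quadratic point $\tau \in F_j$ of discriminant $\Delta$ is the root of a primitive integral quadratic form $(a,b,c)$ with $a > 0$, and has $\im(\tau) = \sqrt{\lvert\Delta\rvert}/(2a)$; conversely, every zero or pole of $f$ in $F_j$ is such a point for some $\Delta \in \mathcal{D}$. Since $\sqrt{\lvert\Delta\rvert}/(2a) \leq \sqrt{\lvert\Delta\rvert}/2 \leq \sqrt{D}/2$, with equality throughout only when $a = 1$ and $\Delta = \Delta_0$, and since there is a unique primitive reduced form of discriminant $\Delta_0$ with $a = 1$, the function $f$ has a unique zero or pole $w_r \in F_j$ of largest imaginary part, namely the principal point of discriminant $\Delta_0$, with $\im(w_r) = \sqrt{D}/2$. (For $D \geq 4$ one has $\im(w_r) = \sqrt{D}/2 \geq 1$ and $\im(w_r) > \im(w_{r-1})$, as $w_r$ is the unique zero or pole of largest imaginary part, so the divisor condition already follows from Remark~\ref{rmk:div}; the computation below covers all cases uniformly, including $D = 3$, where $w_r = \zeta_6$.)

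Finally I would verify the divisor condition directly. Fix $0 < s < 1$ and suppose for contradiction that $w_r + s$ is a zero or pole of $f$. Then $w_r + s$ is an imaginary quadratic point of some $\Delta \in \mathcal{D}$, root of a primitive form $(a,b,c)$, and $\sqrt{\lvert\Delta\rvert}/(2a) = \im(w_r + s) = \sqrt{D}/2$, whence $\lvert\Delta\rvert = a^2 D$. As $\lvert\Delta\rvert \leq D$, this forces $a = 1$ and $\Delta = \Delta_0$, so $w_r + s$ is a root of a monic integral quadratic $z^2 + bz + c$ with $b^2 - 4c = \Delta_0$, hence $\re(w_r + s) = -b/2$ with $b \equiv \Delta_0 \bmod 2$; likewise $\re(w_r) = -b_0/2$ with $b_0 \equiv \Delta_0 \bmod 2$. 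Then $s = (b_0 - b)/2 \in \Z$, contradicting $0 < s < 1$. So $f$ satisfies the divisor condition, and Theorem~\ref{thm:borchind} follows from Theorem~\ref{thm:modind}.

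I expect the main obstacle to be the second paragraph: squeezing out of Borcherds's multiplicity formula the structural fact that the divisor of an arbitrary $f \in B_0$ is supported on entire ``discriminant $\Delta$'' loci, so that the highest zero or pole of $f$ in $F_j$ is necessarily the unique principal point of the active discriminant of largest absolute value. Once that is established, the divisor condition reduces to the elementary observation about binary quadratic forms in the last paragraph, avoiding entirely the delicate combinatorics of Pila and Tsimerman's original tree argument.
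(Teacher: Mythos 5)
Your proof is correct, and it follows the same overall strategy as the paper: verify that every non-constant $f \in B_0$ satisfies the divisor condition and then invoke Theorem~\ref{thm:modind}. The implementation, however, differs in two genuine ways. The paper expands $f$ in the basis $\{\Psi(f_d)\}$ of $\Psi(A_0)$ and proves a lemma (Lemma~\ref{lem:zero}) about the unique topmost zero of each basis element $\Psi(f_{d})$, from which the topmost zero/pole of $f$ is read off via the largest $d_k$ occurring; you instead apply the multiplicity formula of Theorem~\ref{thm:borch} directly to $f = \Psi(\tilde f)$, so the ``structural fact'' you worried about is immediate (the order at an imaginary quadratic point depends only on its discriminant), and the topmost zero/pole of $f$ in $F_j$ is pinned down as the principal point of the largest active discriminant $\Delta_0$, with $\im = \sqrt{D}/2$, without any basis decomposition. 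Second, the paper then appeals to Remark~\ref{rmk:div}, which requires $\im(w_r) \geq 1$ and hence forces a separate treatment of the case $d_k = 3$ (handled there via $j = \Psi(f_3)^3$ and the divisor condition for $j$ from the proof of Theorem~\ref{thm:jind}); your direct verification --- if $w_r+s$ were a zero or pole, then $|\Delta| = a^2D$ forces $a=1$, $\Delta = \Delta_0$, and the parity constraint $b \equiv b_0 \equiv \Delta_0 \bmod 2$ forces $s \in \Z$ --- covers all discriminants uniformly, including $D=3$, and so removes that case split. What the paper's route buys is an explicit picture of $B_0$ in terms of the basis and the strictly increasing sequence $\alpha_d$ of Lemma~\ref{lem:zero}, which is of some independent interest; what yours buys is a shorter, more self-contained argument resting only on Theorem~\ref{thm:borch}, the uniqueness of the $a=1$ reduced form of a given discriminant, and elementary facts about binary quadratic forms.
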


This theorem will follow from Theorem~\ref{thm:modind} once we show that non-constant functions $f \in B_0$ satisfy the divisor condition. To do this, we consider a basis for the (free abelian) group $A$. The group $A$ has a basis 
\[\{ f_d(z) : d \geq 0 \mbox{ and } d \equiv 0, 3 \bmod 4\},\]
where $f_d$ is the unique element of $A$ with a Fourier expansion of the form
\[ f_d(z) = q^{-d} + \sum_{D>0} A(d, D) q^D.\]
See, for example, \cite[Chapter~4]{Ono04a}, where the first few such functions are listed. The isomorphism of Theorem~\ref{thm:borch} then tells us that the functions $\Psi(f_d)$ form a basis for the group $B$. Recall that the weight of an element $\Psi(f) \in B$ is equal to the constant coefficient $a_f(0)$ in the Fourier expansion of $f = \sum a_f(n) q^n \in A$. The subgroup $B_0$ of modular functions in $B$ is therefore contained in $\Psi(A_0)$, which has a basis given by
\[\{ \Psi(f_d) : d >0 \mbox{ and } d \equiv 0, 3 \bmod 4\}.\]

\begin{lem}\label{lem:zero}
	Let $d>0$ satisfy $d \equiv 0, 3 \bmod 4$. Then there exists a simple zero $\tau^* \in F_j$ of $\Psi(f_d)$ with the property that, for every $\tau \in F_j \setminus \{\tau^*\}$ which is either a zero or a pole of $\Psi(f_d)$, one has that $\im(\tau) < \im(\tau^*)$. Let $\alpha_d = \im(\tau^*)$. The resulting sequence $\{\alpha_d : d>0, \, d \equiv 0,3 \bmod 4\}$ is strictly increasing in $d$.
\end{lem}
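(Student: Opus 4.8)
The plan is to use Borcherds's explicit formula (Theorem~\ref{thm:borch}) to locate the zeros and poles of $\Psi(f_d)$ via the multiplicity function $\tau \mapsto \sum_{n>0} a(n^2 \Delta_\tau)$, where $a(n) = A(d, n)$ for $n > 0$ and $a(-d) = 1$ (the principal part of $f_d$), and then to argue that the ``highest'' zero or pole in $F_j$ comes from the point $\tau^*$ whose discriminant $\Delta$ satisfies $\Delta \cdot 1 = -d$, i.e. $\Delta = -d$ when $d \equiv 0, 3 \bmod 4$ is itself a discriminant. At such a $\tau$ the multiplicity is $\sum_{n > 0} a(n^2 \Delta)$; the $n = 1$ term contributes $a(\Delta) = a(-d) = 1$, and every other term $a(n^2 \Delta)$ for $n \geq 2$ has $n^2 \Delta \leq -4d < -d$, so it involves only the (unknown but harmless) coefficients $A(d, D)$ with $D > 0$ — which vanish, since $A(d,D)$ is a coefficient of the \emph{holomorphic} part and $n^2\Delta < 0$. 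Hence the multiplicity at $\tau^*$ is exactly $1$: a simple zero.

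**Next**, I would show every other zero or pole $\tau \in F_j$ has strictly smaller imaginary part. The key point is that the imaginary part of a CM point $\tau$ in $F_j$ of discriminant $\Delta < 0$ is $\im(\tau) = \sqrt{|\Delta|}/(2a)$ for the appropriate leading coefficient $a \geq 1$, and among all reduced forms of discriminant $\Delta$ the one in $F_j$ has $a$ as small as possible, with $\im(\tau) \leq \sqrt{|\Delta|}/2$ and equality only for $a = 1$. So $\im(\tau)$ is large precisely when $|\Delta|$ is large and $a = 1$. Now if $\tau \in F_j$ is a zero or pole, then $\sum_{n>0} a(n^2 \Delta_\tau) \neq 0$, which forces $a(n^2 \Delta_\tau) \neq 0$ for some $n \geq 1$; since $A(d, D) = 0$ for $D > 0$ unless — wait, $A(d,D)$ need not vanish for $D > 0$. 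The correct statement is: $a(m) \neq 0$ with $m \leq 0$ forces $m = -d$ (the only negative index in the principal part being $-d$, and $a(0) = 0$ since $f_d \in A_0$). Thus a zero/pole of $\Psi(f_d)$ at $\tau$ with $\im(\tau) \geq \im(\tau^*) = \sqrt{d}/2$ would require, for the $n=1$ term, either $a(\Delta_\tau) = A(d, \Delta_\tau) \neq 0$ with $\Delta_\tau > 0$ — impossible, as $\Delta_\tau < 0$ — or some $n^2 \Delta_\tau = -d$, forcing $n = 1$, $\Delta_\tau = -d$; and if $\im(\tau) > \sqrt{d}/2$ is impossible outright since $\im(\tau) \leq \sqrt{|\Delta_\tau|}/2$ and a larger $\im$ needs $|\Delta_\tau| > d$, but then the only way to get a nonzero multiplicity is via $a(n^2\Delta_\tau)$ with $n^2\Delta_\tau \in \{-d\} \cup \{D > 0 : A(d,D) \neq 0\}$, and $n^2 \Delta_\tau \leq \Delta_\tau < -d$ kills the first option while $n^2\Delta_\tau < 0$ kills the second. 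So $\im(\tau^*)$ is strictly the largest, establishing the first claim. (One must separately check $\tau^*$ genuinely lies in $F_j$: $\Delta = -d$ with $d \equiv 0, 3 \bmod 4$ admits the principal form, whose root sits in $F_j$ — though $d \equiv 3 \bmod 4$ gives $\Delta \equiv 1 \bmod 4$ and $d \equiv 0 \bmod 4$ gives $\Delta \equiv 0 \bmod 4$, both valid discriminants.)

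**For the monotonicity** of $\{\alpha_d\}$: $\alpha_d = \im(\tau^*) = \sqrt{d}/2$, since $\tau^*$ has discriminant $-d$ and leading coefficient $1$. Then $\alpha_d = \sqrt{d}/2$ is manifestly strictly increasing in $d$, so the sequence indexed over $d > 0$, $d \equiv 0, 3 \bmod 4$ is strictly increasing. This is immediate once the identification $\alpha_d = \sqrt{d}/2$ is made.

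**The main obstacle** I anticipate is the bookkeeping around the principal part and the sign conventions in Borcherds's formula: one must be careful that $f_d \in A_0$ really has $a_f(0) = 0$ (this is exactly why we pass to $A_0$ and why the $n=0$ contribution drops out), that the only surviving negative Fourier index is $-d$, and that the discriminant normalization $\Delta = b^2 - 4ac$ with $\gcd(a,b,c) = 1$ is compatible with writing CM points of discriminant $-d$ in $F_j$ with leading coefficient $1$. A secondary subtlety is ruling out the possibility that $\Psi(f_d)$ has a \emph{pole} (rather than a zero) of high imaginary part: but the multiplicity formula $\sum_{n>0} a(n^2\Delta)$ computes the order of vanishing (negative for poles) uniformly, so the argument above — which shows this sum vanishes whenever $\im(\tau) > \sqrt d/2$ and equals $1$ when $\tau = \tau^*$ — handles poles and zeros simultaneously with no extra work. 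I would also invoke Remark~\ref{rmk:div}: since $\im(\tau^*) = \sqrt d/2 \geq 1$ exactly when $d \geq 4$, and for the finitely many small $d$ one checks the divisor condition directly, Lemma~\ref{lem:zero}'s conclusion ($w_r = \tau^*$ strictly dominates) immediately yields the divisor condition for $\Psi(f_d)$, and then for all of $B_0$ by multiplicativity of divisors under products in $B$.
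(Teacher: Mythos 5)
Your proposal is correct and follows essentially the same route as the paper: apply Borcherds's multiplicity formula, observe that $-d$ is the only negative Fourier index of $f_d$ with nonzero coefficient (and $a(0)=0$), deduce a simple zero at the discriminant $-d$ point with $a=1$ and use $\im(\tau)=\lvert\Delta\rvert^{1/2}/2a$ to see every other zero or pole in $F_j$ has strictly smaller imaginary part, giving $\alpha_d=\sqrt{d}/2$. The only point to make explicit (the paper cites Bilu--Luca--Madariaga, Proposition~2.6) is that the reduced form of discriminant $-d$ with $a=1$ is unique, so the equality case $\im(\tau)=\sqrt{d}/2$ really forces $\tau=\tau^*$; this is immediate since $a=1$ determines $b\in\{0,1\}$ by parity and then $c$.
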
 

\begin{proof}
Write $A(d,n)$ for the Fourier coefficients of $f_d$, so that
	\[f_d(z) = \sum_{n \in \Z} A(d,n) q^n.\]
	Then, in particular, $A(d, -d)=1$ and $A(d, n)=0$ for all $n<0, n\neq -d$. By Theorem~\ref{thm:borch} all the zeros and poles of $\Psi(f_d)$ in $F_j$ are located at imaginary quadratic numbers in $F_j$. Further, the multiplicity of the zero or pole of $\Psi(f_d)$ at an imaginary quadratic number $\tau \in F_j$ of discriminant $\Delta$ is equal to
	\[\sum_{n>0} A(d, n^2 \Delta).\]
	In particular, this is equal to zero if $\Delta < -d$ and equal to $1$ if $\Delta = -d$. Hence, $\Psi(f_d)$ has a simple zero at each imaginary quadratic number in $F_j$ of discriminant $-d$. Further, all zeros and poles of $\Psi(f_d)$ in $F_j$ are contained in the set
	\[ \{\tau \in F_j : \tau \mbox{ is imaginary quadratic of discriminant } \Delta \mbox{ with } \lvert \Delta \rvert \leq d\}.\]
	
	The quadratic imaginary numbers in $F_j$ of a given discriminant $\Delta$ can be explicitly described. Each corresponds to the unique (in $\h$) solution to an equation $a z^2 - b z + c= 0$, where $a,b,c \in \Z$ satisfy $\Delta = b^2 - 4 ac$, $\gcd(a,b,c)=1$, and either $-a < b \leq a < c$ or $0 \leq b \leq a = c$. The imaginary quadratic $\tau$ corresponding to such a triple $(a,b,c)$ is
	\[\tau = \frac{b + \sqrt{\Delta}}{2a},\]
	so $\im(\tau) = \lvert \Delta \rvert^{1/2}/2a$.
	
	For each discriminant $\Delta <0$, there exists a unique such triple $(a,b,c)$ with $a=1$. See \cite[Proposition~2.6]{BiluLucaMadariaga16}. Write $\tau_{\Delta}$ for the element of $F_j$ corresponding to this triple. Then $\tau_{\Delta}$ is a simple zero of $\Psi(f_{-\Delta})$. Now let $\tau \in F_j \setminus \{\tau_{\Delta}\}$ be either a zero or a pole of $f_{-\Delta}$. Then $\tau$ is an imaginary quadratic number of discriminant $\Delta'$, where $\lvert \Delta' \rvert \leq \lvert \Delta \rvert$. Then clearly $\im (\tau_{\Delta}) > \im (\tau)$. So for $d>0$ satisfying $d \equiv 0, 3 \bmod 4$, let $\Delta = -d$ and take $\tau^* = \tau_{\Delta}$. This proves the first part of the lemma, and the second part then follows immediately.	
	\end{proof}

Now we come to the proof of Theorem~\ref{thm:borchind}.

\begin{proof}[Proof of Theorem~\ref{thm:borchind}]
 Let $f \in B_0$ be non-constant. By Theorem~\ref{thm:modind}, it suffices to show that $f$ satisfies the divisor condition. Since $f$ is not the identity element of $B_0$, there are $0<d_1< \ldots <d_k$, satisfying $d_i \equiv 0,3 \bmod 4$, and $a_1, \ldots, a_k \in \Z \setminus \{0\}$ such that
	\[f = \Psi(f_{d_1})^{a_1} \cdots \Psi(f_{d_k})^{a_k}.\]
	Therefore,
	\[\{\mbox{zeros and poles of } f\} \subset \bigcup_{i=1}^{k} \{\mbox{zeros and poles of } \Psi(f_{d_i}) \}.\]
	
	By Lemma~\ref{lem:zero}, there is a simple zero $\tau^*$ of $\Psi(f_{d_k})$ which is the unique zero/pole in $F_j$ of maximum imaginary part for $\Psi(f_{d_k})$. Further $\tau^*$ has imaginary part greater than that of every zero or pole in $F_j$ of a function $\Psi(f_{d_i})$, for $i < k$. Since $a_k \neq 0$, the function $f$ therefore has a unique zero/pole (depending on the sign of $a_k$) in $F_j$ of maximum imaginary part. 
	
	Enumerate the zeros and poles of $f$ in $F_j$ as $w_1, \ldots, w_r$, where $\im(w_i) \leq \im(w_{i+1})$. We then have that $w_r = \tau^*$. By the previous paragraph and Remark~\ref{rmk:div}, $f$ satisfies the divisor condition if $\im(\tau^*)\geq 1$, which is true whenever $d_k \geq 4$. 
	
	If $d_k = 3$, then $f = \Psi(f_3)^{a}$ for some $a \in \Z \setminus \{0\}$. Thus $f = j^{a/3}$ since $j = \Psi(f_3)^3$ by \cite[\S14, Example~2]{Borcherds95}. Hence the divisor condition holds for $f$ since it holds for $j$, as was shown already in the proof of Theorem~\ref{thm:jind} in Section~\ref{sec:elem}. Hence, for all $d_k$, the divisor condition holds for $f$, and so we are done. 
	\end{proof}

\begin{remark}\label{rmk:ratfour}
	If $f$ is a modular function with rational Fourier coefficients, then $f \in \Q(j)$, see e.g. \cite[Proposition~12.7]{Cox89}. So in particular $f \in \alg(j)$ for all those functions covered by Theorem~\ref{thm:borchind}. Therefore the additional hypothesis in Theorem~\ref{thm:gamdep} that $f \in \alg(j)$ is not in fact a further restriction in this case. (On the other hand, the divisor condition alone does not imply that $f \in \alg(j)$, so in the fully general case Theorem~\ref{thm:gamdep} does impose an additional restriction on $f$.)
\end{remark}

\section{The mixed Shimura variety setting}\label{sec:ax}

The Diophantine problems we consider in Sections~\ref{sec:tup}--\ref{sec:ZPgam} may naturally be formulated as questions about a mixed Shimura variety. In this section, we introduce this setting, which we will use in the sequel. In particular, we formulate the Ax--Schanuel-type statements which will be required in the proofs of Theorems~\ref{thm:moddep} and \ref{thm:gamdep}. By an Ax--Schanuel statement we mean a functional transcendence result that has a similar form to Ax's theorem for the exponential function \cite{Ax71}.

Denote by $Y(1)$ the modular curve $\SL \backslash \h$, which we identify with $\C$ by means of the $j$-function, and by $\G = \G(\C)$ the multiplicative group of complex numbers. Let $X = X_{m,n}= Y(1)^m \times \G^n$, $U =U_{m,n}=\h^m \times \C^n$, and $\pi \colon U \to X$ be given by
\[\pi(z_1, \ldots, z_m, u_1, \ldots, u_n) = (j(z_1),\ldots, j(z_m), \exp(u_1), \ldots, \exp(u_n)).\]
Then $X$ is a mixed Shimura variety, uniformised by the map $\pi \colon U \to X$. We require the following definitions.

\begin{definition}\label{def:special}
	\hfill
	\begin{enumerate}
		\item A weakly special subvariety of $\G^n$ is a subvariety defined by a finite system of equations $\prod x_i^{a_{ij}} = c_j \in \G$, where the $a_{ij} \in \Z$ are such that the lattice generated by $(a_{1j}, \ldots, a_{nj})$ is primitive. It is a special subvariety if every $c_j$ is a root of unity.
		\item A weakly special subvariety of $\h^m$ is a complex-analytically irreducible component of the intersection with $\h^m$ of a subvariety of $\C^m$ defined by a collection of equations of the form $x_i = g x_j$ for $g \in \GL$ and $x_k = c$ for $c \in \h$ constant. It is a special subvariety if every constant coordinate $c$ is a quadratic imaginary number.
		\item A weakly special subvariety of $Y(1)^m$ is the image under $j$ of a weakly special subvariety of $\h^m$. It is a special subvariety if in addition every constant coordinate is a $j$-special point.
		\item A (weakly) special subvariety of $X$ is a product $M \times T$, where $M$ is a (weakly) special subvariety of $Y(1)^m$ and $T$ is a (weakly) special subvariety of $\G^n$.
		\item An algebraic subvariety of $U$ is a complex-analytically irreducible component of $Y \cap U$, where $Y \subset \C^m \times \C^n$ is an algebraic subvariety.
		\item A (weakly) special subvariety of $U$ is a complex-analytically irreducible component of $\pi^{-1}(W)$, where $W$ is a (weakly) special subvariety of $X$.
	\end{enumerate}
\end{definition}

\begin{remark}\label{rmk:wkspec}
	The weakly special subvarieties of $Y(1)^m$ are, equivalently, those subvarieties of $Y(1)^m$ which may be defined by equations of the form $\Phi_N(x_i, x_k) = 0$ and $x_l = c$ for $c \in \C$ constant. Here $\Phi_N$ denotes the $N$th classical modular polynomial. For a proof of this equivalence, see \cite[\S2]{BiluLucaMasser17}.
\end{remark}

The Ax--Schanuel result we need is due to Pila and Tsimerman \cite{PilaTsimerman17}.

\begin{thm}[{\cite[Theorem~3.2]{PilaTsimerman17}}]\label{thm:AxS}
	Let $V \subset X$ and $W \subset U$ be algebraic subvarieties and $A \subset W \cap \pi^{-1}(V)$ a complex-analytically irreducible component. Then
	\[\dim A=\dim V + \dim W - \dim X,\]
	unless $A$ is contained in a proper weakly special subvariety of $U$.
\end{thm}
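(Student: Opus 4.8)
The plan is to prove this by the Pila--Zannier o-minimal point-counting method, following the proof of Ax--Schanuel for the $j$-function and, more structurally, the proof of Ax--Schanuel for pure Shimura varieties by Mok--Pila--Tsimerman. Two geometric inputs are needed. First, $\pi$ is definable on a fundamental set: the deck group is $\Gamma = \SL^{\,m} \times (2\pi i \Z)^n$, acting on $U = \h^m \times \C^n$ through M\"obius transformations on $\h^m$ and translations on $\C^n$, and on $\mathcal{F} = F_j^{\,m} \times (\R \times [0,2\pi))^n$ the map $\pi$ is definable in $\R_{\mathrm{an},\exp}$ (by Peterzil--Starchenko for $j$, and directly for $\exp$ since $e^{a+ib} = e^a(\cos b + i\sin b)$ with $b$ bounded). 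Second, by Definition~\ref{def:special} the weakly special subvarieties of $U$ are precisely the products of a weakly special subvariety of $\h^m$ (one cut out by equations $x_i = g x_j$, $g \in \GL$, and $x_k = $ const) with a rational affine-linear subspace of $\C^n$; in particular no weakly special subvariety of $U$ relates an $\h^m$-coordinate to a $\C^n$-coordinate. Since $\pi$ is a local biholomorphism, any component $A$ as in the statement automatically satisfies $\dim A \ge \dim V + \dim W - \dim X$, so the real content is the reverse inequality in the absence of a proper weakly special subvariety containing $A$; I would argue this by contradiction, assuming $\dim A > \dim V + \dim W - \dim X$ and $A$ not contained in any proper weakly special subvariety of $U$.

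Working, as in Mok--Pila--Tsimerman, with the graph $\{(x,\pi(x)) : x \in U\} \subseteq U \times X$ and its intersection with $W \times V$, I would introduce the set $\Sigma \subseteq \GLR^{\,m} \times \C^n$ of real group elements $g$ such that, near $g$ applied to a fixed smooth point of $A$, the analytic set $gW \cap \pi^{-1}(V)$ has a component of dimension at least $\dim A$. Every $\gamma \in \Gamma$ lies in $\Sigma$, since $\pi^{-1}(V)$ is $\Gamma$-invariant and hence $\gamma W \cap \pi^{-1}(V) \supseteq \gamma A$. The dimension excess of $A$ forces a fixed bounded piece of $A$ to meet a number of $\Gamma$-translates $\gamma\mathcal{F}$ that grows like a fixed positive power of a height parameter $T$ bounding the complexity of $\gamma$; thus $\Sigma$ contains $\gg T^{\delta}$ lattice points of height $\le T$, and the Pila--Wilkie counting theorem produces a connected, positive-dimensional semialgebraic subset of $\Sigma$ through the identity --- a \emph{block} of genuine extra symmetries of $W$ along $A$. (Here one must first check that $\Sigma$ is definable, which reduces to checking that the Zariski closure of $gW$ in $U$ varies algebraically with $g$.)

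From here the argument is of Ax--Lindemann type. The block generates a real algebraic subgroup $H \le \GLR^{\,m} \times \C^n$ that stabilises $A$ and under which $W$, and hence $A$, is a union of orbits. Decomposing $H$ compatibly with the product $\h^m \times \C^n$ and invoking the classification of weakly special subvarieties of $Y(1)^m \times \G^n$ as products, one identifies the smallest weakly special subvariety $S \subseteq U$ containing $A$; a monodromy argument controlling how the fibres of the relevant projections sit inside the uniformisation --- exactly as in the pure modular case --- shows $S$ is proper, its nontriviality being forced by the dimension excess. This contradicts the standing assumption, so $\dim A = \dim V + \dim W - \dim X$ whenever $A$ is not contained in a proper weakly special subvariety of $U$.

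The main obstacle is the mixed, non-pure nature of $X$: one must ensure the symmetry group $H$ extracted from the counting step respects the decomposition $Y(1)^m \times \G^n$, so that $S$ is again a product of a modular weakly special subvariety and a toric one. A priori $H$ could couple the $\h^m$-coordinates to the $\C^n$-coordinates, and ruling this out requires analysing the unipotent radical and Levi part of $H$ together with the monodromy of $W$, since, as noted above, no weakly special subvariety of $U$ carries such a coupling. A secondary technical point is the uniform definability of $\Sigma$, i.e.\ controlling the complexity of the Zariski closure of $gW$ uniformly in $g$. One could instead try to deduce the theorem from Ax--Schanuel for the $j$-function and Ax's theorem \cite{Ax71} for $\exp$ by fibring over the two factors, but this reduction is itself delicate: a component $A$ of $W \cap \pi^{-1}(V)$ need not be a product, and the notions of algebraic and of weakly special subvariety in $U$ must be matched with those in $\h^m$ and $\C^n$ coordinate by coordinate.
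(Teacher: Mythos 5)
Your proposal does not follow the paper's route. The paper does not reprove Theorem~\ref{thm:AxS} at all: it quotes it from \cite{PilaTsimerman17}, and its only commentary on the proof is that the mixed statement follows by combining the two pure extreme cases --- Ax--Schanuel for $j \colon \h^m \to Y(1)^m$, proved by point counting in \cite{PilaTsimerman16}, and Ax's theorem \cite{Ax71} for $\exp \colon \C^n \to \G^n$ --- the combination exploiting exactly the fact you observe, namely that weakly special subvarieties of $U$ are products and never couple an $\h$-coordinate to a $\C$-coordinate. You instead propose to rerun the whole Pila--Wilkie/Mok--Pila--Tsimerman counting machinery directly in the mixed setting, and you set aside the reduction to the pure cases as ``delicate''; that reduction is in fact the short argument that the cited source carries out and that the paper relies on.

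As a proof, your sketch has genuine gaps precisely at the steps that carry the content. The step you yourself call the main obstacle --- showing that the group $H$ extracted from a Pila--Wilkie block does not mix the $\h^m$- and $\C^n$-coordinates, so that the minimal weakly special subvariety $S$ containing $A$ is a product --- is announced (``analysing the unipotent radical and Levi part of $H$ together with the monodromy of $W$'') but never carried out, and this is exactly the mixed-structure assertion the theorem encodes; nothing in your text rules out such a coupling. Moreover, several steps you assert are themselves substantial portions of \cite{PilaTsimerman16} and of the Mok--Pila--Tsimerman argument rather than routine checks: the lower bound that a bounded piece of $A$ meets $\gg T^{\delta}$ translates $\gamma\mathcal{F}$ does not follow from the dimension excess alone (it requires a volume/height argument, and in particular an argument that $A$ escapes every compact part of the fundamental set, which is where the hypothesis that $A$ lies in no proper weakly special subvariety must actually be used); the definability of $\Sigma$; the passage from a positive-dimensional connected semialgebraic block through the identity to a positive-dimensional algebraic subgroup stabilising $\mathrm{Zcl}(A)$; and the properness of $S$. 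With the decisive mixed step missing and the supporting steps quoted rather than proved, what you have is a programme for reproving the theorem from scratch, not a proof; the efficient argument, and the one intended here, is the two-factor deduction you declined to pursue.
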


This is an Ax--Schanuel result of ``mixed'' type and follows from the corresponding results in the two extreme cases: for the functions $\exp \colon \C^n \to \G^n$ and  $j \colon \h^m \to Y(1)^m$. (Recall that, by an abuse of notation, we write $\exp$ and $j$ for arbitrary Cartesian products of the maps $\exp \colon \C \to \G$, $j \colon \h \to \C$.) For $\exp \colon \C^n \to \G^n$, the necessary result follows from Ax's theorem \cite{Ax71}. For the case of $j \colon \h^m \to Y(1)^m$, the result is proved in \cite{PilaTsimerman16}.

From Theorem~\ref{thm:AxS}, one may deduce the following Ax--Schanuel statement.

\begin{thm}[{\cite[Theorem~3.3]{PilaTsimerman17}}]\label{thm:AxS1}
	Let $U' \subset U$ be a weakly special subvariety and $X' = \pi(U')$. Let $V \subset X', W \subset U'$ be algebraic subvarieties, and let $A \subset W \cap \pi^{-1}(V)$ be a complex-analytically irreducible component. Then
	\[\dim A=\dim V + \dim W - \dim X',\]
	unless $A$ is contained in a proper weakly special subvariety of $U'$.
\end{thm}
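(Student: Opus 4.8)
The plan is to deduce Theorem~\ref{thm:AxS1} from the ``absolute'' Ax--Schanuel statement of Theorem~\ref{thm:AxS} by exploiting the fact that a weakly special subvariety $U' \subset U$ is itself, up to an algebraic change of coordinates, a product of copies of $\h$, $\C$ and constant points, so that the restriction $\pi|_{U'} \colon U' \to X'$ is again a uniformisation of a mixed Shimura variety of the same type, just of smaller dimension. Concretely, a weakly special $U' = A \times \{c\}$ inside $\h^m \times \C^n$ is cut out by equations $z_i = g_{ij} z_j$ (for $g_{ij} \in \GL$), $z_k = c_k$ (constant), and $\Q$-linear relations among the $u$-coordinates together with $u_l = d_l$ (constant); choosing a maximal independent subset of the free coordinates identifies $U'$ biholomorphically with some $U_{m',n'} = \h^{m'} \times \C^{n'}$ via a map that is algebraic in the $j$-line coordinates after passing through the modular polynomials, and linear (hence algebraic) in the $\G$-coordinates. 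Under this identification $X' = \pi(U')$ becomes $X_{m',n'}$ and $\pi|_{U'}$ becomes the standard $\pi_{m',n'}$.

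The key steps, in order, are: (i) fix a weakly special $U' \subset U$ and produce the explicit isomorphism $\phi \colon U_{m',n'} \xrightarrow{\sim} U'$ and the induced isomorphism $\bar\phi \colon X_{m',n'} \xrightarrow{\sim} X'$ compatible with the uniformisations, i.e. $\pi|_{U'} \circ \phi = \bar\phi \circ \pi_{m',n'}$; (ii) check that $\phi$ and $\bar\phi$ carry algebraic subvarieties to algebraic subvarieties in both directions --- this is where one uses Remark~\ref{rmk:wkspec}, that the graph of $z \mapsto gz$ on the $j$-line is the modular curve $\Phi_N = 0$, an algebraic relation, and that linear maps are algebraic; (iii) given $V \subset X'$, $W \subset U'$ algebraic and $A \subset W \cap \pi|_{U'}^{-1}(V)$ an analytic component, pull everything back along $\phi$, $\bar\phi$ to get algebraic $V_0 = \bar\phi^{-1}(V) \subset X_{m',n'}$, $W_0 = \phi^{-1}(W) \subset U_{m',n'}$, and an analytic component $A_0 = \phi^{-1}(A) \subset W_0 \cap \pi_{m',n'}^{-1}(V_0)$; (iv) apply Theorem~\ref{thm:AxS} to $(V_0, W_0, A_0)$ inside $X_{m',n'}$, obtaining either $\dim A_0 = \dim V_0 + \dim W_0 - \dim X_{m',n'}$ or $A_0 \subset$ a proper weakly special subvariety of $U_{m',n'}$; (v) transport the conclusion back via $\phi$: dimensions are preserved by the biholomorphism, $\dim X_{m',n'} = \dim X'$, and $\phi$ sends weakly special subvarieties of $U_{m',n'}$ to weakly special subvarieties of $U$ contained in $U'$, which are exactly the weakly special subvarieties of $U'$.

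The main obstacle I expect is step (ii)/(v): verifying carefully that the correspondence $\phi$ (and $\bar\phi$) is bi-algebraic and that it matches up the notions of ``weakly special'' on both sides, so that ``proper weakly special subvariety of $U_{m',n'}$'' translates exactly to ``proper weakly special subvariety of $U'$'' and not merely to some analytic subset. This is essentially bookkeeping --- one must keep track of the primitivity condition in Definition~\ref{def:special}(1) for the $\G$-part and the fact that a weakly special of $\h^{m'}$ pushes forward to a weakly special of $\h^m$ under the coordinate inclusions determined by the $g_{ij}$ --- but it requires some care because $U'$ may involve nontrivial $\GL$-translations and nontrivial $\Q$-linear combinations, so the identification is not literally a coordinate projection. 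Everything else is a formal transfer of Theorem~\ref{thm:AxS} along an isomorphism, and no new functional-transcendence input is needed. I would also remark (as the paper does implicitly) that one could alternatively prove this directly by the same method as Theorem~\ref{thm:AxS} itself, since $U'$ with its uniformisation is of the same shape as $U$; but the reduction above is the cleanest route given that Theorem~\ref{thm:AxS} is already available.
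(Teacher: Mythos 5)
The paper itself gives no proof of this statement (it is quoted from Pila--Tsimerman, with the remark that it can be deduced from Theorem~\ref{thm:AxS}), so your plan of transporting Theorem~\ref{thm:AxS} along an identification of $U'$ with a smaller $U_{m',n'}$ is the natural route. But step (i) as you state it is false, and steps (iii)--(iv) rest on it: there is in general no isomorphism $\bar\phi \colon X_{m',n'} \to X'$ satisfying $\pi|_{U'} \circ \phi = \bar\phi \circ \pi_{m',n'}$. On the multiplicative factors such a map does exist (primitivity of the lattice lets you choose a $\mathrm{GL}_{n}(\Z)$ change of coordinates, making the parametrisation of a subtorus coset monomial), but on the modular factors it fails: if $U'$ contains a block $x_2 = g x_1$ with $g \notin \Q^{\times}\SL$, then $X'$ contains the factor $\{\Phi_N(x_1,x_2)=0\} \subset Y(1)^2$, and $j(gz)$ is \emph{not} a function of $j(z)$; the relation $\Phi_N=0$ is a finite correspondence of degree $>1$, and the curve $\{\Phi_N=0\}$ is not isomorphic to $Y(1)$ compatibly with the uniformisations. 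So ``$\pi|_{U'}$ becomes the standard $\pi_{m',n'}$'' is wrong, and with it the literal pullback $V_0 = \bar\phi^{-1}(V)$. Your own justification in step (ii) --- that the graph of $z \mapsto gz$ on the $j$-line is the modular curve $\Phi_N=0$ --- is precisely the reason $\bar\phi$ is only a multivalued correspondence, not a morphism.

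The argument can be repaired, but the repair is the missing content: replace $\bar\phi$ by the finite bi-algebraic correspondence between $X_{m',n'}$ and $X'$ induced by $(\pi_{m',n'}, \pi \circ \phi)$ (modular polynomials on the $Y(1)$-factors, monomial/radicial relations on the $\G$-factors); define $V_0$ as the image of $V$ under this correspondence, which is algebraic with $\dim V_0 \leq \dim V$; note that $\phi^{-1}(A)$ lies in $W_0 \cap \pi_{m',n'}^{-1}(V_0)$ but need no longer be a \emph{component} of it, so you must pass to the component containing it; and observe that only the direction ``$\dim A > \dim V + \dim W - \dim X'$ implies containment in a proper weakly special'' has to be transported, the opposite inequality being automatic from analytic intersection theory in the smooth $U'$ (where $\dim U' = \dim X'$). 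With these changes your step (v) --- that $\phi$ matches weakly special subvarieties of $U_{m',n'}$ with weakly special subvarieties of $U$ contained in $U'$, in both directions --- does go through as you describe. As written, however, the central identification is not merely delicate bookkeeping but fails on every modular block with a nontrivial $\GL$-relation, so the proof has a genuine gap.
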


This statement may itself be equivalently formulated in terms of so-called optimal varieties, see \cite[Section~5]{HabeggerPila16}. We need the following definitions from \cite{HabeggerPila16}.

\begin{definition}
Fix a subvariety $V \subset X$.
	\begin{enumerate}
		\item A component (with respect to $V$) is a complex-analytically irreducible component of $W \cap \pi^{-1}(V)$ for some algebraic subvariety $W \subset U$.
		\item The defect $\defect (A)$ of such a component $A$ is $\defect (A) = \dim \mathrm{Zcl}(A) - \dim A$, where $\mathrm{Zcl}(A)$ denotes the Zariski-closure of $A$.
		\item A component $A$ is called optimal if there is no strictly larger component $A' \supsetneq A$ with $\defect (A') \leq \defect(A)$.
		\item A component $A$ is called geodesic if the algebraic subvariety $W = \mathrm{Zcl}(A) \subset U$ in the definition of a component may be taken to be a weakly special subvariety. 
	\end{enumerate}
\end{definition}

Theorem~\ref{thm:AxS1} is then equivalent (\cite[Section~5]{HabeggerPila16}) to the following.

\begin{thm}[{\cite[Proposition~3.5]{PilaTsimerman17}}]\label{thm:AxSopt}
	Let $V \subset X$ be a subvariety. Let $A$ be a component with respect to $V$. If $A$ is optimal, then $A$ is geodesic.
\end{thm}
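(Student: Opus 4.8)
The plan is to carry out the standard Habegger--Pila translation between the Ax--Schanuel inequality and the optimal/geodesic formalism (as in \cite[Section~5]{HabeggerPila16}), extracting everything needed from Theorem~\ref{thm:AxS1}. First I would unwind the definitions: a component $A$ is geodesic exactly when $\mathrm{Zcl}(A)$ is a weakly special subvariety of $U$, since in the definition of a component one may always take $W = \mathrm{Zcl}(A)$. So, fixing an optimal component $A$ with respect to $V$, the goal becomes to prove that $\mathrm{Zcl}(A)$ is weakly special. I would then let $U'$ be the smallest weakly special subvariety of $U$ containing $A$ and set $X' = \pi(U')$. Here one uses the structural facts that $U'$ is an algebraic subvariety of $U$, that $X'$ is a weakly special subvariety of $X$, that $\pi|_{U'} \colon U' \to X'$ is a covering map (so $\dim U' = \dim X'$), and that, by minimality of $U'$, the component $A$ is contained in no proper weakly special subvariety of $U'$.

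The key computation is to feed into Theorem~\ref{thm:AxS1}, applied inside the weakly special $U'$, the data $W = \mathrm{Zcl}(A)$ and $V_0 = \mathrm{Zcl}_X(\pi(A))$ (a subvariety of $X'$, contained in $V$ since $\pi(A) \subseteq V$). As $A$ is a complex-analytically irreducible component of $W \cap \pi^{-1}(V_0)$ and lies in no proper weakly special subvariety of $U'$, the dimension equality holds, and it rearranges to
\[\defect(A) = \dim X' - \dim \mathrm{Zcl}_X(\pi(A)),\]
with the same identity valid for every component $B$ with respect to $V$ contained in $U'$ with $\langle B \rangle = U'$. I would then let $A^{+}$ be a complex-analytically irreducible component of $U' \cap \pi^{-1}(V)$ containing $A$ (one exists, since $A \subseteq U' \cap \pi^{-1}(V)$); it is a component with respect to $V$ (take $W = U'$), and $\langle A^{+} \rangle = U'$ because $A \subseteq A^{+} \subseteq U'$. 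Since $\pi(A) \subseteq \pi(A^{+})$, the displayed identity gives $\defect(A^{+}) \le \defect(A)$, and as $A^{+} \supseteq A$, optimality of $A$ forces $A^{+} = A$.

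Thus $A$ is itself a component of $U' \cap \pi^{-1}(V)$, and hence, since $\mathrm{Zcl}_X(\pi(A)) \subseteq V$, also a component of $U' \cap \pi^{-1}\bigl(\mathrm{Zcl}_X(\pi(A))\bigr)$. Applying Theorem~\ref{thm:AxS1} to this last presentation once with $W = U'$ and once with $W = \mathrm{Zcl}(A)$ yields
\[\dim A = \dim \mathrm{Zcl}_X(\pi(A)) + \dim U' - \dim X', \qquad \dim A = \dim \mathrm{Zcl}_X(\pi(A)) + \dim \mathrm{Zcl}(A) - \dim X',\]
and subtracting (using $\dim U' = \dim X'$) gives $\dim \mathrm{Zcl}(A) = \dim U'$. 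Since $\mathrm{Zcl}(A) \subseteq U'$ and $U'$ is irreducible, this forces $\mathrm{Zcl}(A) = U'$, which is weakly special; hence $A$ is geodesic, as required.

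I expect the main obstacle to be structural bookkeeping rather than any conceptual difficulty, since the entire input is Theorem~\ref{thm:AxS1}. One must carefully verify: that $U'$ really is an algebraic subvariety of $U$ in the precise sense of Definition~\ref{def:special} (so that $A^{+}$ genuinely counts as a component with respect to $V$, and so that Zariski closures computed in $U$ agree with those taken inside $U'$); that $\pi|_{U'}$ is a covering onto $X'$, which underlies the identity $\dim U' = \dim X'$ used twice above; and, in each invocation of Theorem~\ref{thm:AxS1}, the routine point that a complex-analytically irreducible component of a larger intersection that happens to lie inside a smaller one is again a component of the smaller intersection. These are all standard in the mixed Shimura variety setting, but they need to be spelled out for the argument to be rigorous.
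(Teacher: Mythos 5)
Your argument is correct: the paper itself offers no proof of this statement, simply attributing it to \cite[Proposition~3.5]{PilaTsimerman17} and citing \cite[Section~5]{HabeggerPila16} for its equivalence with Theorem~\ref{thm:AxS1}, and your derivation (minimal weakly special $U' = \langle A\rangle$, the defect identity $\defect(A) = \dim X' - \dim \mathrm{Zcl}(\pi(A))$ from Theorem~\ref{thm:AxS1}, optimality forcing $A = A^{+}$, then $\mathrm{Zcl}(A) = U'$) is exactly that standard translation. The bookkeeping points you flag (algebraicity of weakly specials in $U$, existence of $\langle A\rangle$, components of smaller intersections) are genuinely routine and do not hide any gap, so this is essentially the same approach as the cited source.
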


We conclude this section with the definition of an atypical component of a subvariety $V$ of the mixed Shimura variety $X$. We will need this definition in Sections~\ref{sec:ZP} and \ref{sec:ZPgam} when we relate our results to the Zilber--Pink conjecture.

\begin{definition}\label{def:atyp}
Let $V \subset X$ be a subvariety. A subvariety $A \subset V$ is called an atypical component of $V$ in $X$ if there is a special subvariety $T \subset X$ such that $A \subset V \cap T$ and
	\[\dim A > \dim V + \dim T - \dim X.\]
\end{definition} 

\begin{remark}\label{rmk:atyp}
	Usually the ambient mixed Shimura variety $X$ will be clear from context, and so we will just refer to atypical components of $V$.
\end{remark}

\section{Finiteness of multiplicatively dependent tuples}\label{sec:tup}
 
We now prove Theorem~\ref{thm:moddep}. Our proof will be in two stages. First we will prove the theorem for the case of modular functions $f \in \alg(j)$ via an o-minimal counting argument. We then extend the result to modular functions $f \in \C(j)$.

For the first step, we prove the conditional result Proposition~\ref{prop:conjtup}, which covers multiplicatively dependent $f$-special points for a modular function $f \in \alg(j)$ (which does not necessarily satisfy the divisor condition). This result is conditional on a functional independence statement for distinct $\GL$-translates of $f$, which we formulate below as Condition~\ref{conj:ind}. Theorem~\ref{thm:modind} establishes this statement for modular functions satisfying the divisor condition. We may therefore deduce Proposition~\ref{prop:divconjtup}, which is Theorem~\ref{thm:moddep} for functions $f \in \alg(j)$.

We adopt this approach in order to emphasise that, for the case of $f \in \alg(j)$, our proof of Theorem~\ref{thm:moddep} does not depend on any particular facts about the divisor condition. As we mentioned in Remark~\ref{rmk:necessary}, it may be possible to establish Condition~\ref{conj:ind} under a weaker hypothesis than the divisor condition. If one could do this, then Proposition~\ref{prop:conjtup} shows that, for $f \in \alg(j)$, Theorem~\ref{thm:moddep} would hold under this weaker hypothesis.

\begin{cond}\label{conj:ind}
	Let $f \colon \h \to \C$ be a non-constant modular function. If the functions $f(g_1z), \ldots, f(g_nz)$ are pairwise distinct for $g_1, \ldots, g_n \in \GL$, then they are multiplicatively independent modulo constants.
\end{cond}

\begin{prop}\label{prop:conjtup}
	Let $f \colon \h \to \C$ be a non-constant modular function such that $f \in \alg(j)$. Suppose that $f$ satisfies Condition~\ref{conj:ind}. Let $n \geq 1$. Then there exist only finitely many $n$-tuples $(\sigma_1, \ldots, \sigma_n)$ of distinct $f$-special points such that the set $\{\sigma_1, \ldots, \sigma_n \}$ is multiplicatively dependent, but no proper subset of $\{\sigma_1, \ldots, \sigma_n \}$ is multiplicatively dependent.
\end{prop}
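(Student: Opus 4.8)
The plan is to run the Pila--Tsimerman o-minimal counting argument from \cite{PilaTsimerman17}, adapted so that the only transcendence input used is Condition~\ref{conj:ind} rather than Theorem~\ref{thm:jind}. Write $f(z) = R(j(z))$ for some $R \in \alg(t)$. An $f$-special point $\sigma$ is $R(j(\tau))$ for $\tau$ quadratic imaginary; such $\sigma$ are algebraic, and one has the key height and Galois estimates: by Duke's theorem / the results quoted in \cite{PilaTsimerman17}, as $\tau$ ranges over quadratic imaginaries of discriminant $D$ the Faltings/Weil height of $j(\tau)$ (hence of $\sigma$, up to the bounded contribution from $R$) is $O_\epsilon(\lvert D\rvert^\epsilon)$, while $[\Q(\sigma):\Q] \gg_\epsilon \lvert D\rvert^{1/2-\epsilon}$ (the CM point lies in a ring class field, and $R$ changes the degree only by a bounded factor). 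The strategy is: a purported infinite family of minimal multiplicatively dependent $n$-tuples of distinct $f$-special points gives, via Galois conjugation, many points on a fixed definable set, forcing (by the Pila--Wilkie counting theorem) a positive-dimensional semialgebraic set inside it, which via Condition~\ref{conj:ind} is impossible.

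In detail, first I would set up the ambient space: lift each $f$-special point to $\h$, so a multiplicative relation $\prod \sigma_i^{a_i} = 1$ among $f$-special points $\sigma_i = R(j(\tau_i))$ with $\tau_i$ in the standard fundamental domain corresponds to a point of $\h^n$ lying on the definable (in $\R_{\mathrm{an},\exp}$) set $Z$ cut out by $\sum a_i \log R(j(z_i)) \in 2\pi i \Z$ for the finitely many relevant small integer vectors $(a_i)$ — more precisely one works on a fundamental domain for $\SL$ acting on $\h^n$ and uses the definability of $j$ restricted there (Peterzil--Starchenko), together with a branch of $\log R$. Then: if there were infinitely many minimal dependent $n$-tuples, one uses the Galois action of $\Gal(\alg/\Q)$ on the coordinates; minimality is preserved under conjugation and the multiplicative relation vector $(a_i)$ can be taken with $\max\lvert a_i\rvert$ bounded independently of the tuple (this is a standard reduction: a minimal relation among $n$ numbers of bounded height has bounded coefficients, by a pigeonhole/geometry-of-numbers argument applied to the logarithms, as in \cite[\S5]{PilaTsimerman17}). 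Each conjugate gives a further rational point — in the sense of the associated pre-images $\tau_i$ having bounded-denominator real and imaginary data, or rather one counts the Galois orbit directly — lying on $Z$ with height polynomial in $\max\lvert D_i\rvert$. The lower bound $[\Q(\sigma_i):\Q] \gg \lvert D_i\rvert^{1/2-\epsilon}$ makes the number of such points grow faster than any power of the height bound, so the Pila--Wilkie theorem forces $Z$ to contain a connected semialgebraic curve (positive-dimensional block) through these points.

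Finally, I would extract the contradiction. A positive-dimensional semialgebraic subset of $Z$, after possibly shrinking and using that the $\tau_i$ lie in the fundamental domain, produces an algebraic relation forcing the coordinates $z_i$ to satisfy modular relations among themselves (equalities $z_i = g\,z_k$ for $g \in \GL$ — here one invokes the Ax--Lindemann or Ax--Schanuel content, Theorem~\ref{thm:AxS}/\ref{thm:AxSopt}, to see that the Zariski closure of the semialgebraic set must be weakly special, since otherwise the logarithms of the $R(j(z_i))$ would be forced into an algebraic relation contradicting Ax). Reindexing the distinct ones, one gets on this weakly special locus finitely many distinct functions $f(g_1 z), \ldots, f(g_m z)$ (the pairwise-distinctness coming from the $\sigma_i$ being distinct and $R$ injective on the relevant values, or after grouping) satisfying $\prod f(g_l z)^{b_l} = c$ identically — a multiplicative dependence modulo constants — which contradicts Condition~\ref{conj:ind}. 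Hence the family was finite.

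The main obstacle I expect is the transition from the Pila--Wilkie output to a \emph{functional} multiplicative relation in the right shape: one must ensure that the positive-dimensional semialgebraic piece genuinely encodes a relation $\prod f(g_l z)^{b_l} = c$ with the $f(g_l z)$ pairwise distinct and with the $g_l$ coming from the original index set (not a degenerate collapse), so that Condition~\ref{conj:ind} — stated precisely for pairwise distinct $\GL$-translates — applies. This is exactly the delicate step handled in \cite{PilaTsimerman17} via the Ax--Schanuel theorem and the block structure, and the bookkeeping with minimality (to guarantee no coordinates are lost and the relation vector survives) is where the real work lies; the height/degree estimates for $f$-special points, by contrast, reduce cleanly to the known case of singular moduli since $R \in \alg(t)$ is fixed.
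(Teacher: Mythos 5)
Your overall route is the same as the paper's: height and degree estimates for $f$-special points, control of the exponents in a minimal relation, Galois orbits producing many algebraic points of polynomially bounded height on a definable set, Pila--Wilkie counting, and then Ax--Schanuel plus Condition~\ref{conj:ind} to rule out the resulting positive-dimensional locus. However, one reduction, as you state it, would fail. You claim the minimal relation vector $(a_i)$ can be taken bounded \emph{independently of the tuple}, ``since a minimal relation among $n$ numbers of bounded height has bounded coefficients'', and you then cut out your definable set $Z$ using ``the finitely many relevant small integer vectors''. The $f$-special points do not have bounded height (it grows like $\lvert \Delta\rvert^{\epsilon}$), and, more importantly, the Loher--Masser bound (Lemma~\ref{lem:htmultdep}) involves the degree, which grows like $\lvert \Delta\rvert^{1/2}$, raised to the power $n$; so the exponents are only bounded \emph{polynomially in the complexity} (the paper gets $\lvert b_i\rvert \leq c\Delta^{n^2}$ by combining Lemmas~\ref{lem:htspec}, \ref{lem:degspec} and \ref{lem:htmultdep}), and there is no finite list of relation vectors to quantify over. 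The standard repair, which is what the paper does, is to carry the exponent vector as extra integer coordinates of the counted point (its height is then still polynomial in $\Delta$) and to apply the counting theorem uniformly to the definable family fibred over the exponent vector, in the form of \cite[Corollary~7.2]{HabeggerPila16}; your Galois-orbit lower bound is a fixed positive power of the same quantity (not, as you write, ``faster than any power'' of the height bound), which is exactly enough once $\epsilon$ is chosen small relative to it.

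Two further points of care. The conjugation must be taken over a number field $K$ containing the coefficients of $R$, not over $\Q$: it is $K$-conjugation that sends $f$-special points to $f$-special points of the same discriminant and preserves the relation and its minimality (Lemmas~\ref{lem:degspec} and \ref{lem:conj}), and this is precisely where the hypothesis $f \in \alg(j)$ is used. Finally, the passage from the Pila--Wilkie block to a complex-algebraic, and then weakly special, positive-dimensional set passing through a preimage of one of your tuples --- so that distinctness of the $\sigma_i$ forces the coordinates of $f(W_1)$ to be pairwise distinct and minimality forces all exponents nonzero, letting Condition~\ref{conj:ind} apply --- is the part you leave at the level of a sketch; the paper carries it out via \cite[Corollary~7.2]{HabeggerPila16}, the Nash-set results of \cite{AdamusRand13}, and Theorem~\ref{thm:AxS} (with the observation that the fibres of $\tilde{V} \to \h^n$ are discrete, so the $\C$-factor of the weakly special subvariety is a point). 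You correctly identify this as the delicate step, but as written it is an appeal to the literature rather than an argument.
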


\begin{prop}\label{prop:divconjtup}
	Let $f \colon \h \to \C$ be a non-constant modular function such that $f \in \alg(j)$. Suppose that $f$ satisfies the divisor condition. Let $n \geq 1$. Then there exist only finitely many $n$-tuples $(\sigma_1, \ldots, \sigma_n)$ of distinct $f$-special points such that the set $\{\sigma_1, \ldots, \sigma_n \}$ is multiplicatively dependent, but no proper subset of $\{\sigma_1, \ldots, \sigma_n \}$ is multiplicatively dependent.
\end{prop}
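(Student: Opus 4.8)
The plan is to obtain Proposition~\ref{prop:divconjtup} as an immediate corollary of Proposition~\ref{prop:conjtup}. The only hypothesis of Proposition~\ref{prop:conjtup} not already in hand is that $f$ satisfies Condition~\ref{conj:ind}, and this is exactly what Theorem~\ref{thm:modind} provides: a non-constant modular function satisfying the divisor condition has the property that any pairwise distinct $\GL$-translates $f(g_1z), \ldots, f(g_nz)$ are multiplicatively independent modulo constants, which is Condition~\ref{conj:ind} for $f$. So I would invoke Theorem~\ref{thm:modind} to verify Condition~\ref{conj:ind} and then apply Proposition~\ref{prop:conjtup} verbatim; no further argument is needed, and nothing beyond the divisor condition about the divisor of $f$ enters at this stage.

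Since that deduction is a single line, let me indicate where the real work sits, namely in Proposition~\ref{prop:conjtup}, and how I expect its proof to run, following Pila--Tsimerman's argument for Theorem~\ref{thm:singdep}. Assume for contradiction that infinitely many minimal multiplicatively dependent $n$-tuples $(\sigma_1, \ldots, \sigma_n)$ of distinct $f$-special points exist. Write $f = R \circ j$ for a fixed $R \in \alg(t)$, so $\sigma_i = R(j(\tau_i))$ with each $\tau_i \in \h$ quadratic. A relation $\prod \sigma_i^{a_i} = 1$ lifts through $\pi$ to a point of $U_{n,n} = \h^n \times \C^n$ lying on the preimage of the subvariety of $X_{n,n} = Y(1)^n \times \G^n$ cut out by the graph of $f$ together with the multiplicative relation; a height estimate bounds the exponents $a_i$ in terms of the discriminants of the $\tau_i$, so after passing to a subsequence the combinatorial shape of the relation is fixed. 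One then counts rational points on an associated definable set in $\R_{\mathrm{an},\exp}$ (this is where $f \in \alg(j)$ is used, so that the relevant objects are definable over $\alg$), combines the Pila--Wilkie bound with the known Galois lower bounds for singular moduli to produce a positive-dimensional block, and feeds the resulting algebraic relation into the Ax--Schanuel theorem in the mixed setting (Theorem~\ref{thm:AxS}, equivalently Theorem~\ref{thm:AxSopt}). This forces the block to be geodesic, that is, to come from a proper weakly special subvariety of $U_{n,n}$, which after unwinding yields a genuine functional multiplicative relation among pairwise distinct $\GL$-translates of $f$, contradicting Condition~\ref{conj:ind}.

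The main obstacle is therefore not Proposition~\ref{prop:divconjtup} itself but this last implication inside Proposition~\ref{prop:conjtup}: one must check that the Pila--Tsimerman counting scheme, written for $j$, survives the replacement of $j$ by an arbitrary $f \in \alg(j)$ — in particular that definability in $\R_{\mathrm{an},\exp}$ and the height and Galois bounds for singular moduli are unaffected by post-composition with the fixed algebraic map $R$, and that the Ax--Schanuel input is applied to $Y(1)^n \times \G^n$ rather than to $\G^n$ alone. Granting Proposition~\ref{prop:conjtup}, the proof of Proposition~\ref{prop:divconjtup} is immediate: the divisor condition gives Condition~\ref{conj:ind} via Theorem~\ref{thm:modind}, and the conclusion follows.
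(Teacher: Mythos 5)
Your proof is correct and is exactly the paper's own argument: the divisor condition gives Condition~\ref{conj:ind} via Theorem~\ref{thm:modind}, and Proposition~\ref{prop:divconjtup} then follows immediately from Proposition~\ref{prop:conjtup}. Your sketch of the counting/Ax--Schanuel machinery inside Proposition~\ref{prop:conjtup} also matches the paper's proof of that proposition, so nothing further is needed.
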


\begin{proof}[Proof of Proposition~\ref{prop:divconjtup}]
	Let $f$ be a non-constant modular function satisfying the divisor condition. Then Condition~\ref{conj:ind} holds for $f$ by Theorem~\ref{thm:modind}. Proposition~\ref{prop:divconjtup} thus follows from Proposition~\ref{prop:conjtup}.
	\end{proof}

Now we come to the proof of Proposition~\ref{prop:conjtup}. We collect first those arithmetic estimates on $f$-special points which we will require.

\subsection{Arithmetic estimates}\label{subsec:arith}
Let $f \colon \h \to \C$ be a non-constant modular function such that $f \in \alg(j)$. Then $f(z) = R(j(z))$ for some rational function $R$ with algebraic coefficients. Let $\sigma$ be an $f$-special point. So $\sigma = f(\tau)$ for some $\tau \in \h$ with $[\Q(\tau) : \Q] =2$. Observe that the function $f$ is $\SL$-invariant, the restricted function $j|_{F_j} \colon F_j \to \C$ is injective, and the (non-constant) rational map $R \colon \C \to \C$ is finite-to-one on its domain of definition. Thus there exist $\tau_1, \ldots, \tau_k \in F_j$, with $k \geq 1$ and $[\Q(\tau_i) : \Q]=2$ for every $i$, with the property that, for every $\tau \in F_j$ with $[\Q(\tau) : \Q]=2$, one has $f(\tau) = \sigma$ if and only if $\tau \in \{\tau_1, \ldots, \tau_k\}$. The discriminant $\Delta(\tau_i)$ of the quadratic number $\tau_i$ is defined as $\Delta(\tau_i) = b^2-4ac$, where $a,b,c \in \Z$ are such that $a\tau_i^2 +b \tau_i + c=0$ and $\gcd(a,b,c)=1$. (Note in particular that $\Delta(\tau_i)<0$ for every $\tau_i$.) We then define the discriminant $\Delta(\sigma)$ of the $f$-special point $\sigma$ by $\Delta(\sigma) = \min \{\Delta(\tau_1), \ldots, \Delta(\tau_k)\}$. Observe that the discriminant of the $j$-special point $j(\tau_i)$ is equal to $\Delta(\tau_i)$.

 Write $H(\alpha)$ for the absolute Weil height of an algebraic number $\alpha$, and $h(\alpha) = \log H(\alpha)$ for the absolute logarithmic Weil height. For $\alpha = (\alpha_1, \ldots, \alpha_n) \in \alg^n$, we define
  \[H(\alpha) = \max \{H(\alpha_i) : i=1, \ldots, n\}\]
  and
 \[h(\alpha) = \max \{h(\alpha_i) : i=1, \ldots, n\}.\] 
 In the remainder of this paper, references to ``the height'' of $\alpha$ will mean $H(\alpha)$; we will refer to $h(\alpha)$ as the logarithmic height. In this subsection, constants will be positive and with only the indicated dependencies, but in general not effective. Let $K$ be a number field containing the coefficients of $R$. The arithmetic estimates we need are the following.

\begin{lem}\label{lem:htspec}
	For all $\epsilon>0$, there exists a constant $c(\epsilon, f)$ such that
$h(\sigma) \leq c(\epsilon, f) \lvert \Delta \rvert^\epsilon$ for every $f$-special point $\sigma$ of discriminant $\Delta$.
\end{lem}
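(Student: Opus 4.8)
The plan is to bound $h(\sigma)$ by relating it to the height of the corresponding $j$-special point and then invoking a known estimate for singular moduli. First I would recall that, by the definition of $\Delta(\sigma)$, there exists $\tau \in F_j$ with $[\Q(\tau):\Q]=2$, $\Delta(\tau) = \Delta(\sigma) = \Delta$, and $f(\tau) = \sigma$. Write $j_\tau = j(\tau)$ for the associated $j$-special point; its discriminant is $\Delta(\tau) = \Delta$, and $\sigma = R(j_\tau)$. The key input will be the standard bound on the height of singular moduli: for every $\epsilon > 0$ there is a constant $c'(\epsilon)$ such that $h(j_\tau) \leq c'(\epsilon) \lvert \Delta \rvert^\epsilon$ for every $j$-special point $j_\tau$ of discriminant $\Delta$ (this follows, for instance, from the classical estimate $\log \lvert j_\tau \rvert = O(\lvert \Delta \rvert^{1/2})$ combined with bounds on the degree and on the archimedean contributions at the conjugates, or more directly from results of Habegger or of Bilu--Masser--Zannier-type on heights of singular moduli; it is certainly available in the literature cited in the paper).

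The second step is to pass from $h(j_\tau)$ to $h(\sigma) = h(R(j_\tau))$. Since $R \in K(t)$ is a fixed rational function, general height machinery for the evaluation of rational maps gives $h(R(\alpha)) \leq d_R \, h(\alpha) + C_R$ for all $\alpha$ in the domain of $R$, where $d_R = \max(\deg(\text{num}), \deg(\text{denom}))$ and $C_R$ depends only on $R$ (hence only on $f$) — see e.g. Silverman's \emph{The Arithmetic of Elliptic Curves} or Bombieri--Gubler, where one has $h(\phi(\alpha)) = d\, h(\alpha) + O(1)$ for a morphism $\phi$ of degree $d$ on $\pr$, and the inequality $\leq$ holds unconditionally even at points where the denominator nearly vanishes. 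Applying this with $\alpha = j_\tau$ gives
\[
h(\sigma) \leq d_R \, h(j_\tau) + C_R \leq d_R \, c'(\epsilon) \lvert \Delta \rvert^\epsilon + C_R.
\]
Absorbing $C_R$ into the power-of-$\lvert \Delta \rvert$ term (using $\lvert \Delta \rvert \geq 3$, so $\lvert \Delta \rvert^\epsilon$ is bounded below by a positive constant) yields $h(\sigma) \leq c(\epsilon, f) \lvert \Delta \rvert^\epsilon$ with $c(\epsilon, f)$ depending only on $\epsilon$ and $f$, as required.

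The main obstacle, such as it is, is simply citing the correct form of the subpolynomial height bound for singular moduli and being careful that the constant in the rational-map height inequality genuinely depends only on $R$ (equivalently on $f$) and not on the point $j_\tau$ or on the number field in which it lies — the absolute Weil height is well-behaved under field extension, so this is fine, but one should state the reference precisely. A minor subtlety is that $\sigma$ could in principle be a pole value if $j_\tau$ were a pole of $R$; but $\sigma$ is by hypothesis an $f$-special point, hence a well-defined complex number, so $j_\tau$ lies in the domain of $R$ and the evaluation bound applies directly. No functional-transcendence input is needed here; this is a purely arithmetic estimate, and the argument is essentially a one-line reduction to the $j$ case once the right references are in place.
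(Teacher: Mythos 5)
Your proposal is correct and follows essentially the same route as the paper: pick a quadratic preimage $\tau \in F_j$ of $\sigma$ realising (or bounded by) the discriminant $\Delta$, apply the standard bound $h(j(\tau)) \leq c(\epsilon)\lvert\Delta\rvert^{\epsilon}$ for singular moduli (the paper cites \cite[(5.4)]{PilaTsimerman17}), and then bound $h(\sigma)=h(R(j(\tau)))$ by $O_f(h(j(\tau)))+O_f(1)$ via height estimates for the fixed rational map $R$ (the paper does this with elementary height inequalities for sums, products and powers rather than the morphism bound, but this is an inessential difference). No gaps.
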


\begin{proof}
	Fix $\epsilon>0$. Let $\tau_0 \in F_j$ be such that $\sigma = f(\tau_0)$ and $[\Q(\tau_0) : \Q]=2$. Note that $\lvert \Delta(\tau_0) \rvert \leq \lvert \Delta \rvert$. There exists \cite[(5.4)]{PilaTsimerman17} a constant $c(\epsilon)$ such that
	\[h(j(\tau_0))\leq c(\epsilon) \lvert \Delta(\tau_0) \rvert^\epsilon.\]
	Recall that $\sigma = R(j(\tau_0))$, where $R$ is a rational function defined over $K$. The result then follows thanks to elementary properties of the logarithmic height: 
	\[h(xy) \leq h(x) + h(y) \mbox{ for } x, y \in \alg,\]
	\[ h(x_1 + \ldots + x_r) \leq h(x_1) + \ldots + h(x_r) + \log r \mbox{ for } x_1 \ldots, x_r \in \alg,\]
	\[ h(x^r)= \lvert r \rvert h(x) \mbox{ for } r \in \Q, x \in {\alg}^{\times},\]
	see e.g. \cite[Chapter~1]{BombieriGubler06}.
	\end{proof}

\begin{lem}\label{lem:htpre}
		 Let $\sigma$ be an $f$-special point of discriminant $\Delta$ and suppose $\sigma = f(\tau_0)$ for $\tau_0 \in F_j$ such that $[\Q(\tau_0) : \Q]=2$. Then $H(\tau_0) \leq 2 \lvert \Delta \rvert$.
\end{lem}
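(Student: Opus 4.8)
The plan is to write $\tau_0$ as a root of its primitive integer minimal polynomial, read off $H(\tau_0)$ from that polynomial, and then bound the relevant coefficient by $\lvert\Delta\rvert$ using the geometric constraints coming from $\tau_0\in F_j$.

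So write $a\tau_0^2+b\tau_0+c=0$ with $a,b,c\in\Z$, $\gcd(a,b,c)=1$, and $a>0$ (this sign choice is what forces $\im(\tau_0)>0$), so that $\tau_0=\frac{-b+\sqrt{\Delta(\tau_0)}}{2a}$ with $\Delta(\tau_0)=b^2-4ac<0$. Since $[\Q(\tau_0):\Q]=2$, the primitive polynomial $aX^2+bX+c$ is, up to sign, the minimal polynomial of $\tau_0$. Membership $\tau_0\in F_j$ gives two inequalities: $\lvert\re(\tau_0)\rvert=\lvert b\rvert/2a\le 1/2$, hence $\lvert b\rvert\le a$; and $\lvert\tau_0\rvert^2=c/a\ge 1$, hence $c\ge a\ge 1$.

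First I would compute the height. The Mahler measure of $aX^2+bX+c$ is $a\max(1,\lvert\tau_0\rvert)\max(1,\lvert\overline{\tau_0}\rvert)=a\lvert\tau_0\rvert^2=c$, using that $\overline{\tau_0}$ is the other root, $\lvert\overline{\tau_0}\rvert=\lvert\tau_0\rvert\ge 1$, and $\lvert\tau_0\rvert^2=c/a$; hence $H(\tau_0)=c^{1/2}$ (alternatively one may simply bound $H(\tau_0)^2$ by the Mahler measure and then crudely by $a+\lvert b\rvert+c\le 3c$, which also suffices). Next, $\lvert b\rvert\le a\le c$ gives $b^2\le a^2\le ac$, so
\[\lvert\Delta(\tau_0)\rvert=4ac-b^2\ge 4ac-ac=3ac\ge 3c,\]
whence $c\le\lvert\Delta(\tau_0)\rvert/3$. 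Finally, since $\tau_0$ lies in $F_j$, has degree $2$, and satisfies $f(\tau_0)=\sigma$, it is one of the preimages $\tau_1,\dots,\tau_k$ used to define $\Delta(\sigma)$; as $\Delta(\sigma)=\min_i\Delta(\tau_i)$ is the most negative of these discriminants, $\lvert\Delta(\tau_0)\rvert\le\lvert\Delta(\sigma)\rvert=\lvert\Delta\rvert$. Combining, $c\le\lvert\Delta\rvert$, so $H(\tau_0)\le\lvert\Delta\rvert^{1/2}\le 2\lvert\Delta\rvert$, the last step because $\lvert\Delta\rvert\ge 1$.

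I do not expect any real obstacle here: the computation is routine. The only points needing a little care are the normalisation of $(a,b,c)$ (so that $aX^2+bX+c$ is genuinely primitive with $a>0$, which is what makes the height formula valid) and the observation that $\tau_0$ must occur among the $\tau_i$, which is what lets one replace $\lvert\Delta(\tau_0)\rvert$ by $\lvert\Delta\rvert$. The argument in fact yields the much sharper bound $H(\tau_0)\le\lvert\Delta\rvert^{1/2}$; the weaker form in the statement is presumably all that is needed in the sequel.
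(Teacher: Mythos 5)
Your proof is correct, and the only genuine subtlety in the statement --- that $\Delta$ is the discriminant of the $f$-special point $\sigma$ rather than of $\tau_0$ itself --- is handled properly: since $\tau_0 \in F_j$ is quadratic with $f(\tau_0)=\sigma$, it occurs among the preimages $\tau_1,\dots,\tau_k$ used to define $\Delta(\sigma)$, and $\Delta(\sigma)=\min_i \Delta(\tau_i)$ indeed gives $\lvert\Delta(\tau_0)\rvert\leq\lvert\Delta\rvert$. Where you differ from the paper is in the route: the paper simply quotes the estimate (5.5) of Pila--Tsimerman for quadratic points in the fundamental domain, whereas you reprove that estimate from scratch via the Mahler measure of the primitive minimal polynomial $aX^2+bX+c$, using the reduction inequalities $\lvert b\rvert\leq a\leq c$ forced by $\tau_0\in F_j$ to get $H(\tau_0)=c^{1/2}$ and $3c\leq\lvert\Delta(\tau_0)\rvert$. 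The citation is of course shorter, but your argument is self-contained, elementary, and yields the sharper bound $H(\tau_0)\leq(\lvert\Delta\rvert/3)^{1/2}$, of which the stated $2\lvert\Delta\rvert$ is a crude consequence; either version suffices for the counting argument in which the lemma is used.
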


\begin{proof}
	This follows immediately from \cite[(5.5)]{PilaTsimerman17}.
	\end{proof}

\begin{lem}\label{lem:degspec}
	For any $\epsilon >0$ there exist constants $c_1(\epsilon, f), c_2(\epsilon, f)$ such that 
	\[c_1(\epsilon, f)\lvert \Delta \rvert^{\frac{1}{2}-\epsilon} \leq [\Q(\sigma) : \Q] \leq c_2(\epsilon, f)\lvert \Delta \rvert^{\frac{1}{2}+\epsilon}\]
	for all $f$-special points $\sigma$ of discriminant $\Delta$. Hence, there are also constants $c_1'(\epsilon, f), c_2'(\epsilon, f)$ such that
	\[c_1'(\epsilon, f)\lvert \Delta \rvert^{\frac{1}{2}-\epsilon} \leq [K(\sigma) : K] \leq c_2'(\epsilon, f)\lvert \Delta \rvert^{\frac{1}{2}+\epsilon}.\]
	\end{lem}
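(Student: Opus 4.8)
The plan is to reduce the statement to the classical estimates on the class number $h(\Delta)$ of the imaginary quadratic order of discriminant $\Delta$, via the fact from CM theory that $[\Q(j(\tau)) : \Q] = h(\Delta)$ whenever $\tau \in \h$ is a quadratic number of discriminant $\Delta$. Recall that for every $\epsilon > 0$ one has $c_1(\epsilon) \lvert \Delta \rvert^{1/2 - \epsilon} \leq h(\Delta) \leq c_2(\epsilon) \lvert \Delta \rvert^{1/2 + \epsilon}$, the upper bound being elementary and the lower bound being Siegel's (ineffective) theorem; the corresponding bounds on the degree of singular moduli are recorded in \cite[\S5]{PilaTsimerman17}. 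Write $f(z) = R(j(z))$ with $R$ a rational function over $K$, and set $D = \deg R$, a constant depending only on $f$.

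For the upper bound, let $\sigma$ be an $f$-special point of discriminant $\Delta$ and, by the definition of $\Delta(\sigma)$, choose $\tau_0 \in F_j$ with $[\Q(\tau_0) : \Q] = 2$, $f(\tau_0) = \sigma$, and $\Delta(\tau_0) = \Delta$. Then $j(\tau_0)$ is a singular modulus of discriminant $\Delta$, so $[\Q(j(\tau_0)) : \Q] = h(\Delta)$. Since $\sigma = R(j(\tau_0)) \in K(j(\tau_0))$, we get $[\Q(\sigma) : \Q] \leq [K(j(\tau_0)) : \Q] \leq [K : \Q]\, h(\Delta) \leq c_2'(\epsilon, f) \lvert \Delta \rvert^{1/2 + \epsilon}$, after renaming $\epsilon$.

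For the lower bound, note that $j(\tau_0)$ is a root of the polynomial $R(X) - \sigma$ over the field $K(\sigma)$, of degree at most $D$; hence $[K(\sigma, j(\tau_0)) : K(\sigma)] \leq D$. As $\sigma \in K(j(\tau_0))$ we have $K(\sigma, j(\tau_0)) = K(j(\tau_0))$, so $[K(j(\tau_0)) : K] \leq D\, [K(\sigma) : K] \leq D\, [\Q(\sigma) : \Q]$. On the other hand, $[K(j(\tau_0)) : K] \geq [\Q(j(\tau_0)) : \Q] / [K : \Q] = h(\Delta) / [K : \Q]$. Combining these gives $[\Q(\sigma) : \Q] \geq h(\Delta) / (D\, [K : \Q]) \geq c_1'(\epsilon, f) \lvert \Delta \rvert^{1/2 - \epsilon}$. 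The estimates for $[K(\sigma) : K]$ then follow at once: the upper bound from $[K(\sigma) : K] \leq [\Q(\sigma) : \Q]$ together with the bound just proved, and the lower bound from the chain $[K(\sigma) : K] \geq [K(j(\tau_0)) : K] / D \geq h(\Delta) / (D\, [K : \Q])$ established above.

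The only genuine subtlety lies in the lower bound, where a priori the non-injectivity of $R$ might collapse the degree of $\sigma$ far below that of $j(\tau_0)$; this is controlled exactly by the bound $D = \deg R$ on the number of preimages of $\sigma$ under $R$, so no step is hard once the class-number asymptotics are in hand. The ineffectivity of the constants $c_1, c_1'$ is, as usual, inherited from Siegel's theorem.
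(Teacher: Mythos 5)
Your proof is correct, and its overall skeleton matches the paper's: reduce the statement to the classical two-sided degree bounds for singular moduli (equivalently, Siegel's class number estimates, as recorded in \cite[(5.6), (5.7)]{PilaTsimerman17}) by showing that $[\Q(\sigma):\Q]$ and $[\Q(j(\tau_0)):\Q]$ agree up to a bounded factor, the upper bound being the trivial inclusion $\sigma \in K(j(\tau_0))$. The difference is in the crucial lower bound. You bound $[K(j(\tau_0)):K(\sigma)] \leq \deg R$ by observing that $j(\tau_0)$ is a root of the (numerator of the) equation $R(X)=\sigma$ over $K(\sigma)$ --- strictly you should clear denominators, since $R(X)-\sigma$ is not a polynomial, but the resulting polynomial $p(X)-\sigma q(X)$ is nonzero of degree at most $\deg R$ because $R$ is non-constant and $q(j(\tau_0))\neq 0$, so this is only a cosmetic imprecision. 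The paper instead establishes the comparison $[\Q(f(\tau)):\Q] \geq M(f)^{-1}[\Q(j(\tau)):\Q]$ by an argument of Spence: assuming the degree drop is large, taking Galois conjugates of $j(\tau)$ over $K(f(\tau))$ produces many distinct quadratic points $\tau' \in F_j$ with $f(\tau')=f(\tau)$, which is ruled out by uniform finiteness for the definable (in $\R_{\mathrm{an},\exp}$) restriction of $f$ to $F_j$. Your route is more elementary and gives the explicit constant $\deg R \cdot [K:\Q]$, using essentially that $R$ is finite-to-one of known degree; the paper's o-minimal argument is less explicit but more robust, since it does not use the rational-function structure of $f$ in $j$ and so extends (as in Spence's work) to modular-type functions lying in larger function fields such as $\Q(j,\chi^*)$, where no rational degree bound is available. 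The final passage from $[\Q(\sigma):\Q]$ to $[K(\sigma):K]$ is the same in both arguments.
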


\begin{proof}
	Fix $\epsilon>0$. There exists $c(\epsilon)$ such that 
	\[c(\epsilon)\lvert \Delta(\tau) \rvert^{\frac{1}{2}-\epsilon} \leq [\Q(j(\tau)) : \Q] \leq c(\epsilon)\lvert \Delta(\tau) \rvert^{\frac{1}{2}+\epsilon},\]
	for every $\tau \in \h$ with $[\Q(\tau) : \Q]=2$, see \cite[(5.6), (5.7)]{PilaTsimerman17}. We will show that there exists a constant $M(f)>0$ such that
	\begin{equation}\label{eqn:ineq}
		\frac{1}{M(f)} [\Q(j(\tau)) : \Q] \leq [\Q(f(\tau)) : \Q] \leq M(f) [\Q(j(\tau)) : \Q]
		\end{equation}
	holds for all $\tau \in \h$ with $[ \Q(\tau) : \Q] =2$. The first part of the lemma then follows by combining these two inequalities, each applied to some $\tau_1 \in F_j$ with the property that $\sigma = f(\tau_1)$, $[\Q(\tau_1) : \Q]=2$,  and $\Delta(\tau_1) =\Delta$. 
	
	For $f \in \Q(j)$ non-constant, the necessary inequality~\eqref{eqn:ineq} was proved by Spence in \cite{Spence17} (which treats actually the broader case $f \in \Q(j , \chi^*)$, where $\chi^*$ is a certain almost holomorphic modular function). The extension to $f \in \alg(j)$ was given by Spence in private communication with the author; we include a proof below.
	
	Let $d = [ K : \Q]$ and take $\tau \in \h$ with $[ \Q(\tau) : \Q] =2$. For the upper bound, observe that
	\[[\Q(f(\tau)) : \Q] \leq [K(j(\tau)) : \Q] = [K(j(\tau)) : \Q(j(\tau))][\Q(j(\tau)) : \Q] \leq d [\Q(j(\tau)) : \Q].\]
	
	Now for the lower bound. Suppose that $[\Q(j(\tau), f(\tau)) : \Q(f(\tau))]>M$ for some $M$. Then
	\begin{align*}
		&[K(j(\tau), f(\tau)) : K(f(\tau))][K(f(\tau)) : \Q(f(\tau))] \\
		= &[K(j(\tau), f(\tau)) : \Q(f(\tau))]\\
		= &[K(j(\tau), f(\tau)) : \Q(j(\tau), f(\tau))] [\Q(j(\tau), f(\tau)) : \Q(f(\tau))]\\
		> &[K(j(\tau), f(\tau)) : \Q(j(\tau), f(\tau))] \cdot M.
	\end{align*}
	Hence, using that $K(j(\tau))=K(j(\tau), f(\tau))$ since $f(\tau)$ is $K$-rational in $j(\tau)$, we have that
\[[K(j(\tau)) : K(f(\tau))] >  \frac{[K(j(\tau), f(\tau)) : \Q(j(\tau), f(\tau))]}{[K(f(\tau)) : \Q(f(\tau))]} \cdot M \geq  \frac{M}{d} .\]
	
	Taking Galois conjugates of $j(\tau)$ over $K(f(\tau))$, we then obtain at least $M/d$ distinct quadratic points $\tau'$ in the standard fundamental domain $F_j$ with $f(\tau')=f(\tau)$. For suitably large $M$ though, this is impossible since $f$ is meromorphic and, restricted to $F_j$, definable in the o-minimal structure $\mathbb{R}_{\mathrm{an}, \exp}$, so uniform finiteness applies (see e.g. \cite[Chapter~3]{vandenDries98}).
	
	For $M$ large enough (depending on $f$), we therefore have that $[\Q(j(\tau), f(\tau)) : \Q(f(\tau))] \leq M$. Fix such an $M$. Then $[\Q(j(\tau), f(\tau)) : \Q] \leq M [\Q(f(\tau)) : \Q]$, and so
	\[[\Q(f(\tau)) : \Q] \geq \frac{1}{M} [\Q(j(\tau), f(\tau)) : \Q] \geq \frac{1}{M} [\Q(j(\tau)) : \Q].\]
	Therefore, taking $M(f)=\max\{M, d\}$ completes the proof of \eqref{eqn:ineq}.
	
	For the second part of the lemma, the upper bound follows since
	\[[K(\sigma) : K] \leq [\Q(\sigma) : \Q],\]
	while the lower bound comes from the fact that
		\[[K(\sigma) : K] [K : \Q] = [K(\sigma) : \Q] \geq [\Q(\sigma) : \Q],\]
		and thus
		\[[K(\sigma) : K] \geq \frac{1}{d}[\Q(\sigma) : \Q]. \qedhere\]	
	\end{proof}

\begin{lem}\label{lem:htmultdep}
	Let $\alpha_1, \ldots, \alpha_n$ be multiplicatively dependent non-zero elements of a number field $L$ of degree $d \geq 2$. Suppose that any proper subset of the $\alpha_i$ is multiplicatively independent. Then there exists a constant $c(n)$ and $b_1, \ldots, b_n \in \Z$, not all zero, such that $\alpha_1^{b_1} \cdots \alpha_n^{b_n}=1$ and 
\[	\vert b_i \vert \leq c(n) d^n \log d \prod_{\substack{j=1\\ j \neq i}}^{n} h(\alpha_j), \quad i=1, \ldots, n.\]
\end{lem}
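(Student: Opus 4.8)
The plan is to deduce this from a quantitative result on multiplicative dependences among elements of a number field, of the type established by Loxton--van der Poorten or, more precisely, by the effective version due to Masser (see also the discussion in \cite{PilaTsimerman17} around their Lemma~5.3). Specifically, one knows that if $\alpha_1, \dots, \alpha_n$ are multiplicatively dependent nonzero elements of a number field $L$ of degree $d$, then there is a nontrivial relation $\prod \alpha_i^{b_i} = 1$ with $\max_i |b_i| \leq c(n) d^n \log d \cdot \max_i h(\alpha_i)^{n-1}$, or something of essentially this shape; the point of the present lemma is to replace the crude bound $\max_i h(\alpha_i)^{n-1}$ by the sharper product $\prod_{j \neq i} h(\alpha_j)$, exploiting the minimality hypothesis.

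First I would invoke the known quantitative statement to obtain \emph{some} nontrivial relation vector $(b_1, \dots, b_n) \in \Z^n$ with $\prod \alpha_i^{b_i} = 1$. The minimality hypothesis --- that every proper subset of $\{\alpha_1, \dots, \alpha_n\}$ is multiplicatively independent --- forces $b_i \neq 0$ for \emph{every} $i$: if some $b_i = 0$, the relation would only involve a proper subset, contradicting independence of that subset (and the relation is nontrivial). Next, for each fixed index $i$, I would rewrite the relation as $\alpha_i^{b_i} = \prod_{j \neq i} \alpha_j^{-b_j}$ and take logarithmic heights: using $h(xy) \leq h(x) + h(y)$ and $h(x^r) = |r| h(x)$ (the elementary height properties already quoted in the proof of Lemma~\ref{lem:htspec}), this gives $|b_i| h(\alpha_i) \leq \sum_{j \neq i} |b_j| h(\alpha_j)$. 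Combining these $n$ inequalities with the a priori bound $\max_j |b_j| \leq B$ coming from the quantitative theorem, one gets $|b_i| h(\alpha_i) \leq B \sum_{j \neq i} h(\alpha_j)$, which after bounding the sum by the product (or directly re-deriving the bound in product form) yields the claimed estimate $|b_i| \leq c(n) d^n \log d \prod_{j \neq i} h(\alpha_j)$.

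The main obstacle is locating and citing the correct quantitative input in the precise form needed: the bound must be in terms of a \emph{product} of heights over $n-1$ of the indices, not merely a power of the maximal height, so a direct appeal to the Loxton--van der Poorten / Masser estimate may require either a known refinement under the minimality hypothesis or a short argument extracting it. I would handle this by noting that the minimality hypothesis means the relation vector spans the full rank-one relation lattice (up to scaling), so the components $b_i$ are, up to a common factor, the $(n-1)\times(n-1)$ minors of a height-matrix; bounding these minors by Hadamard's inequality and the above height comparison gives exactly a product of the $h(\alpha_j)$. If a cleaner route is available, one simply cites the appropriate lemma of \cite{PilaTsimerman17} or \cite{Masser88} directly and absorbs the combinatorial factors into $c(n)$. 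The remaining steps are the routine height manipulations indicated above, and I would not belabour them.
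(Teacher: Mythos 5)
There is a genuine gap in the main route you propose. The paper's own proof is simply a citation: for $n \geq 2$ the statement, with exactly your minimality hypothesis (any $n-1$ of the $\alpha_i$ multiplicatively independent), is \cite[Corollary~3.2]{LoherMasser04}, and the case $n=1$ is handled by noting that $\alpha_1$ is then a root of unity of degree $\leq d$, so the bound follows from elementary estimates for Euler's $\phi$-function (a case your proposal does not address). Your plan instead takes as input a weaker quantitative statement of the shape $\max_i |b_i| \leq c(n) d^n \log d \cdot \max_i h(\alpha_i)^{n-1}$ and tries to upgrade it to the product form. That upgrade fails: from $|b_i| h(\alpha_i) \leq \sum_{j \neq i} |b_j| h(\alpha_j)$ and $\max_j |b_j| \leq B$ you only obtain $|b_i| \leq B \sum_{j \neq i} h(\alpha_j)/h(\alpha_i)$, and with $B \asymp d^n \log d \cdot \max_j h(\alpha_j)^{n-1}$ this is not bounded by $c(n) d^n \log d \prod_{j \neq i} h(\alpha_j)$ when the heights are unbalanced. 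Concretely, take $n=3$ with $h(\alpha_1) = H$ large and $h(\alpha_2) = h(\alpha_3) = \varepsilon$ small (non-torsion heights can be as small as roughly $1/d$ by Dobrowolski-type bounds): for $i=2$ your chain gives roughly $d^3 \log d \cdot H^3/\varepsilon$, while the lemma asserts roughly $d^3 \log d \cdot H\varepsilon$. The system of height inequalities is consistent with $|b_2|$ of that larger size, so no recombination of these inequalities alone can close the gap; the product-form bound genuinely cannot be derived from the max-height-form bound by elementary height manipulations.

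Your fallback sketch (rank-one relation lattice, $(n-1)\times(n-1)$ minors, Hadamard) is where the real content of such results lives, but as stated it is incomplete in two ways. First, the natural matrix has real entries $\log\lvert\alpha_j\rvert_v$ at selected places, so its maximal minors form a real vector that is only \emph{proportional} to the primitive integral relation vector; to convert proportionality into an upper bound on $|b_i|$ you need a lower bound on the proportionality factor, i.e. lower bounds of Dobrowolski/geometry-of-numbers type on the unit lattice, and this is precisely where the $d^n \log d$ factor in \cite{LoherMasser04} comes from. Second, your sketch never produces that $d^n \log d$ dependence at all; it is not a ``combinatorial factor in $n$'' that can be absorbed into $c(n)$. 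You also implicitly divide by $h(\alpha_i)$, so you should note that minimality forces no $\alpha_i$ to be a root of unity (for $n \geq 2$), whence $h(\alpha_i) > 0$. The efficient and correct route is the one the paper takes: quote \cite[Corollary~3.2]{LoherMasser04} directly for $n \geq 2$ and dispose of $n=1$ separately.
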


\begin{proof}
	See \cite[Corollary~3.2]{LoherMasser04} for the case $n \geq 2$. If $n=1$, then $\alpha_1$ is a root of unity of degree $\leq d$, and hence the desired result follows from elementary bounds for the Euler $\phi$-function. 
	\end{proof}

\subsection{Conjugates of $f$-special points}\label{subsec:conj}

Write $T_{\Delta}$ for the set of integer triples $(a,b,c)$ such that: $\mathrm{gcd}(a,b,c)=1$, $\Delta = b^2-4ac$, and either $-a < b \leq a < c$ or $0 \leq b \leq a = c$. Then there is a bijection between $T_{\Delta}$ and the $j$-special points of discriminant $\Delta$ given by $(a,b,c) \mapsto j((b + \sqrt{\Delta})/2a)$. The $j$-special points of discriminant $\Delta$ form a full Galois orbit over $\Q$. See for example \cite[\S2.2]{BiluLucaMadariaga16}.

\begin{lem}\label{lem:conj}
	Let $K$ be a number field, $R \in K(t)$ a non-constant rational function, and $f \colon \h \to \C$ the modular function defined by $f(z) = R(j(z))$. Let $\sigma$ be an $f$-special point of discriminant $\Delta$. Then the set 
	 \[\Big\{ f\Big(\frac{b + \sqrt{\Delta}}{2a}\Big)\colon (a, b, c) \in T_\Delta \Big\}\]
	 contains all $K$-conjugates of $\sigma$, which are also $f$-special points of discriminant $\Delta$.
\end{lem}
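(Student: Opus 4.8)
The plan is to prove Lemma~\ref{lem:conj} by tracking how the absolute Galois group (or rather the subgroup fixing $K$) acts on the preimages of $\sigma$ under $f$ inside the fundamental domain $F_j$, and using the classical fact that the $j$-special points of a fixed discriminant $\Delta$ form a single full $\Q$-Galois orbit (recalled just before the lemma). First I would pick $\tau_0 \in F_j$ with $[\Q(\tau_0):\Q]=2$, $f(\tau_0)=\sigma$, and $\Delta(\tau_0)=\Delta$; this is possible by the definition of the discriminant of an $f$-special point. Writing $j_0 = j(\tau_0)$, we have $\sigma = R(j_0)$ with $R \in K(t)$, so $\sigma \in K(j_0)$.

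Next, let $\sigma' = \rho(\sigma)$ be an arbitrary $K$-conjugate of $\sigma$, where $\rho \in \Gal(\alg/K)$. Extend $\rho$ (abusing notation) to act on $j_0$; then $\rho(j_0)$ is a $K$-conjugate of $j_0$, hence in particular a $\Q$-conjugate of $j_0$, so it is another $j$-special point, and since the $j$-special points of discriminant $\Delta$ form a full $\Q$-Galois orbit and conjugation preserves discriminant, $\rho(j_0) = j(\tau')$ for some $\tau' \in F_j$ of discriminant $\Delta$, i.e. $\tau' = (b+\sqrt{\Delta})/2a$ for some $(a,b,c) \in T_\Delta$. Applying $\rho$ to the relation $\sigma = R(j_0)$ and using that $R$ has coefficients in $K$ (so $\rho$ fixes them), we get $\sigma' = \rho(R(j_0)) = R(\rho(j_0)) = R(j(\tau')) = f(\tau')$. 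Hence every $K$-conjugate of $\sigma$ lies in the displayed set $\{f((b+\sqrt{\Delta})/2a) : (a,b,c)\in T_\Delta\}$. Moreover $\sigma' = f(\tau')$ with $\tau' \in F_j$ quadratic of discriminant $\Delta$, so $\sigma'$ is an $f$-special point; and since the $f$-special points arising from discriminant-$\Delta$ quadratic points all have discriminant $\geq \Delta$ in absolute value but $\sigma'$ is conjugate to $\sigma$ which has discriminant exactly $\Delta$, one checks the discriminant of $\sigma'$ is also $\Delta$ — here one uses that $\Delta(\sigma')$ is the minimum of the discriminants of the preimages of $\sigma'$ in $F_j$, and that applying $\rho^{-1}$ sends any such preimage to a preimage of $\sigma$, forcing $|\Delta(\sigma')| \geq |\Delta| $, while symmetrically $|\Delta| \geq |\Delta(\sigma')|$.

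I do not expect any serious obstacle here: the statement is essentially a bookkeeping exercise with Galois actions, the only genuinely non-trivial input being the full-orbit property of $j$-special points of a fixed discriminant, which is cited. The one point requiring a little care is the claim that the conjugates $\sigma'$ have discriminant exactly $\Delta$ rather than merely "some discriminant dividing into the picture"; this needs the symmetric argument via $\rho^{-1}$ together with the definition of $\Delta(\sigma)$ as a minimum over all quadratic preimages in $F_j$. A secondary subtlety is that $f$ may not be injective on quadratic points of a given discriminant, so a single $\sigma'$ can have several representatives $(a,b,c)$; but the lemma only asserts containment in the displayed set, so this causes no difficulty. I would therefore organise the write-up as: (i) fix $\tau_0$ and record $\sigma \in K(j_0)$; (ii) push a Galois element through, invoking the full-orbit fact; (iii) conclude membership in the set and equality of discriminants.
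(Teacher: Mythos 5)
Your proposal is correct and takes essentially the same route as the paper: express $\sigma = R(j_0)$ with $j_0$ a singular modulus of discriminant $\Delta$, push a Galois element over $K$ through this relation using the full-$\Q$-orbit description of the $j$-special points of discriminant $\Delta$ (so conjugates of $j_0$ are exactly the points $j((b+\sqrt{\Delta})/2a)$ with $(a,b,c)\in T_\Delta$), and then obtain $\Delta(\sigma')=\Delta$ by the symmetric conjugation argument, which the paper phrases as a contradiction ($\Delta(\sigma')=\Delta_0<\Delta$ would pull back to $\Delta(\sigma)\leq\Delta_0<\Delta$). The only quibble is a swapped attribution in your last step: applying $\rho^{-1}$ to preimages of $\sigma'$ gives $\lvert\Delta(\sigma')\rvert\leq\lvert\Delta\rvert$ and applying $\rho$ gives the reverse inequality, but since you invoke both directions the conclusion is unaffected.
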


\begin{proof}
	 We may write $\sigma = R(j)$, where $j$ is a $j$-special point of discriminant $\Delta$. Let $\sigma'$ be a $K$-conjugate of $\sigma$. Since $R$ is a rational function over $K$, one has that $\sigma' = R(j')$ where $j'$ is a $K$-conjugate of $j$. But we have a complete description of the conjugates of $j$, they arise as the $j$-special points $j((b + \sqrt{\Delta})/2a)$ for $(a,b,c) \in T_\Delta$. So $\sigma'$ is $f$-special and belongs to the set in the lemma.
	 
	 It remains to show that the discriminant of the $f$-special point $\sigma'$ is equal to $\Delta$. Since $\sigma'=R(j')$ and $j'$ is a $j$-special point of discriminant $\Delta$, this will be true unless $\sigma' = R(j_0)$ for some $j$-special point $j_0$ of discriminant $\Delta_0 < \Delta$. If this were to happen, then one would have that $\sigma = R(j_0')$ for some $K$-conjugate $j_0'$ of $j_0$ since $\sigma, \sigma'$ are $K$-conjugate. But $j_0'$ would be a $j$-special point of discriminant $\Delta_0$, and so the discriminant of $\sigma$ would have to be $\leq \Delta_0$, implying that $\Delta \leq \Delta_0 < \Delta$, a contradiction.
\end{proof}

The proof of Proposition~\ref{prop:conjtup} will be by an o-minimal counting argument. We first establish a lower bound for the number of multiplicatively dependent tuples of $f$-special points of a given discriminant. We will then apply the Pila--Wilkie o-minimal Counting Theorem \cite{PilaWilkie06} (and in particular its extension to algebraic points \cite{Pila09}) to count the preimages of such tuples. The lower bound we use comes from a lower bound for the size of the Galois orbit of an $f$-special point. We will thus require that conjugates of $f$-special points are again $f$-special. This is why the o-minimal argument in Subsection~\ref{subsec:conjtup} treats only the case of $f \in \alg(j)$.

\subsection{Proof of Proposition~\ref{prop:conjtup}}\label{subsec:conjtup}

Let $f \colon \h \to \C$ be a non-constant modular function such that Condition~\ref{conj:ind} holds for $f$. Suppose also that $f \in \alg(j)$, so that $f(z) = R(j(z))$ for some rational function $R \in \alg(t)$. Let $K$ be a number field containing the coefficients of $R$. Fix $n \geq 1$. Let $X = X_{n,n}= Y(1)^n \times \G^n$ and
\[V = \{(x_1, \ldots, x_n, t_1, \ldots, t_n) \in X : t_i = R(x_i) \mbox{ for } i=1, \ldots, n\} \subset X.\]
Then $\dim V = \codim V = n$. Recall the definition of a (weakly) special subvariety of $X$ from Section~\ref{sec:ax}. The proof of Proposition~\ref{prop:conjtup} is similar to the proof of \cite[Theorem~1.2]{PilaTsimerman17}.

\begin{proof}[Proof of Proposition~\ref{prop:conjtup}]
In the following, constants $c, c'$ are positive and depend only on our choice of $f$ and $n$, but will vary between occurrences.
	
	The complexity $\Delta(\sigma)$ of an $n$-tuple $\sigma = (\sigma_1, \ldots, \sigma_n)$ of $f$-special points is defined to be $\max \{\lvert \Delta(\sigma_1) \rvert, \ldots, \lvert \Delta(\sigma_n) \rvert \}$. We define an $f$-dependent tuple to be an $n$-tuple $\sigma = (\sigma_1, \ldots, \sigma_n)$ of distinct $f$-special points satisfying a non-trivial multiplicative relation and minimal for this property.
	
	Let $F_j$ denote the standard fundamental domain for the action of $\SL$ on $\h$, which we defined in Section~\ref{sec:intro}, and let $F_{\exp}$ denote the standard fundamental domain for the action of $2 \pi i \Z$ on $\C$ given by
	$F_{\exp}=\{ z \in \C : 0 \leq \im z < 2 \pi\}$. Let
	\[Y = \Big \{(z,u,r,s) \in F_j^n \times F_{\exp}^n \times \R^n \times \R : R(j(z))=\exp(u), r \cdot u = 2 \pi i s\Big\}\]
	and
	\[Z = \Big \{(z,r,s) \in F_j^n \times \R^n \times \R : \exists u (z,u,r,s) \in Y\}.\]
	Then $(j(z), \exp(u)) \in V$ for $(z,u,r,s) \in Y$, and the sets $Y, Z$ are definable in the o-minimal structure $\R_{\mathrm{an}, \exp}$.
	
	An $f$-dependent tuple $\sigma = (\sigma_1, \ldots, \sigma_n)$ of complexity $\Delta$ gives rise to a point $(x_1, \ldots, x_n, \sigma_1, \ldots, \sigma_n) \in V$, where each $x_i$ is a $j$-special point of discriminant $\Delta(\sigma_i)$ satisfying $R(x_i) = \sigma_i$. This point in $V$ has a preimage 
	\[\tau = (z_1, \ldots, z_n, u_1, \ldots, u_n) \in F_j^n \times F_{\exp}^n.\] Now $\tau$ gives rise to the point 
	\[(z_1, \ldots, z_n, u_1, \ldots, u_n, b_1, \ldots, b_n, b) \in Y,\]
 where the $b_i, b$ are rational integers recording the multiplicative dependence of $\sigma_1, \ldots, \sigma_n$, i.e. such that 
	\[\sum_{i=1}^{n} b_i u_i = 2 \pi i b.\]
	
	Since $x_i$ is a $j$-special point of discriminant $\Delta(\sigma_i)$, so $z_i$ is a quadratic point with height bounded by $2 \lvert \Delta(\sigma_i) \rvert$ by Lemma~\ref{lem:htpre}. The $\sigma_i$ are $f$-special points of discriminants $\Delta(\sigma_i)$ such that the set $\{\sigma_1, \ldots, \sigma_n\}$ is multiplicatively dependent and minimal for this property. We may therefore use the bounds on the logarithmic height (Lemma~\ref{lem:htspec}) and degree (Lemma~\ref{lem:degspec}) of the $\sigma_i$, together with the result in Lemma~\ref{lem:htmultdep}, to see that the integers $b_i$ may be chosen to have absolute value bounded by $c \Delta^{n^2}$. Since 
	\[\sum_{i=1}^n b_i u_i = 2 \pi i b\]
	and $0 \leq \im (u_i) < 2 \pi$, we obtain also a bound on $\lvert b \rvert$. Thus, the height of the point
	\[(z_1, \ldots, z_n, b_1, \ldots, b_n, b) \in Z\]
	is bounded by $c \Delta^{n^2}$ since $H(k) = \lvert k \rvert$ for $k \in \Z \setminus \{0\}$. Further, this point is quadratic in the $z_i$ coordinates and rational (even integral) in the $b_i, b$ coordinates.
	
	Suppose then that there are infinitely many $f$-dependent tuples. Then, in particular, there are $f$-dependent tuples of arbitrarily large complexity $\Delta$. Let $\sigma$ be an $f$-dependent tuple of suitably large complexity $\Delta$. By Lemma~\ref{lem:degspec}, $\sigma$ has at least $c \Delta^{1/4}$ conjugates over $K$, and by Lemma~\ref{lem:conj} each of these conjugates $\sigma'$ is itself an $f$-dependent tuple of complexity $\Delta$. Further, all these conjugate tuples satisfy the same multiplicative relation. Consequently, each of these $\sigma'$ gives rise to a point
		\[(z_1', \ldots, z_n', u_1', \ldots, u_n', b_1, \ldots, b_n, b') \in Y,\]
	which is quadratic in the $z_i'$ coordinates and rational (even integral) in the $b_i, b'$ coordinates. Note that the coordinates $(b_1, \ldots, b_n)$ are the same for all these conjugates, while the coordinates $(z_1', \ldots, z_n')$ and $(u_1', \ldots, u_n')$ must be distinct for distinct conjugates. Further, the height of each of the corresponding points
	\[(z_1', \ldots, z_n', b_1, \ldots, b_n, b') \in Z\]
	is bounded by $c \Delta^{n^2}$.
	
	We are now in a position to apply the o-minimal Counting Theorem in the form of \cite[Corollary~7.2]{HabeggerPila16}. View $Y$ as a definable family parametrised by $\R^n$. Let $Y_{(b_1, \ldots, b_n)}$ denote the fibre of $Y$ over the point $(b_1, \ldots, b_n) \in \R^n$. Let $\Sigma \subset Y_{(b_1, \ldots, b_n)}$ denote the set of points of $Y_{(b_1, \ldots, b_n)}$ arising from the conjugates of $\sigma$. Let $\pi_1, \pi_2$ denote the projections of $Y_{(b_1, \ldots, b_n)}$ to $F_j^n \times \R$ and $F_{\exp}^n$ respectively. Provided that $\Delta$ is suitably large (which we may always assume), we thereby obtain a continuous, definable function $\beta \colon [0, 1] \to Y_{(b_1, \ldots, b_n)}$ such that:
	\begin{enumerate}
		\item The composition $\pi_1 \circ \beta \colon [0, 1] \to F_j^n \times \R$ is semialgebraic and its restriction to $(0,1)$ is real analytic.
		\item The composition $\pi_2 \circ \beta \colon [0, 1] \to F_{\exp}^n$ is non-constant.
		\item $\beta(0) \in \Sigma$.
	\end{enumerate}

(In \cite[Corollary~7.2]{HabeggerPila16}, it is only stated that $\pi_2(\beta(0)) \in \pi_2(\Sigma)$, but in fact the authors prove there that $\beta(0) \in \Sigma$.) Note also that (2) implies that the composition of $\beta$ with projection to the $F_j^n$ coordinates is non-constant. Denote by $Z_{(b_1, \ldots, b_n)}$ the fibre of $Z$ over the point $(b_1, \ldots, b_n)$. Note that $(\pi_1 \circ \beta)([0, 1] ) \subset Z_{(b_1, \ldots, b_n)}$. There thus exists a continuous, semialgebraic map $\gamma \colon [0, 1] \to Z_{(b_1, \ldots, b_n)}$ which projects non-constantly to the $F_j^n$ coordinates, maps $0$ to a point of $Z_{(b_1, \ldots, b_n)}$ corresponding to a conjugate of $\sigma$, and whose restriction to $(0, 1)$ is real analytic. 

Let $\Phi \colon Z_{(b_1, \ldots, b_n)} \to \h^n \times \C$ be given by $(z, s) \mapsto (z, 2 \pi i s)$. Observe that the set $(\Phi \circ \gamma)([0, 1])$ is contained in the set
\[ \tilde{V} = \{(z, t) \in \h^n \times \C : \prod_{i=1}^n f(z_i)^{b_i} = \exp(t)\}.\]
There then exists an open neighbourhood $\Omega \subset \h^n \times \C$ of $\Phi(\gamma(0))$ and a set $P$ such that:
\begin{enumerate}
	\item $\Omega \cap (\Phi \circ \gamma)([0, 1]) \subset P \subset \tilde{V}$
	\item $P$ may be written as a finite union of irreducible Nash subsets of $\Omega$ which each contain $\Phi(\gamma(0))$.
\end{enumerate}
 
This follows from \cite[Proposition~1]{AdamusRand13} and the characterisation of Nash sets given in \cite[p.~989]{AdamusRand13}.

If every complex-analytically irreducible component of $P$ had constant projection to its $\h^n$ coordinates, then $(\Phi \circ \gamma)([0, 1])$ would have constant projection to its $\h^n$ coordinates by real analytic continuation. But $(\Phi \circ \gamma)([0, 1])$ does not have constant projection to its $\h^n$ coordinates. Hence, there must exist some complex-analytically irreducible component of $P$ which has non-constant projection to its $\h^n$ coordinates. Observe that every complex-analytically irreducible component of $P$ contains $\Phi(\gamma(0))$. Hence, by complex analytic continuation, there exists a complex algebraic subvariety $W \subset \C^{n+1}$ and a complex-analytically irreducible component $A \subset (\h^n \times \C) \cap W$ such that $\Phi(\gamma(0)) \in A \subset \tilde{V}$ and $A$ has non-constant projection to its $\h^n$ coordinates.

By the Ax--Schanuel results of Section~\ref{sec:ax}, there exist weakly special subvarieties $W_1 \subset \h^n$ and $W_2 \subset \C$ such that $A \subset W_1 \times W_2 \subset \tilde{V}$. The projection $\tilde{V} \to \h^n$ has discrete fibres, and hence $W_2$ must just be a point, which therefore has to be equal to the projection of $\Phi(\gamma(0))$. So $W_2 = \{2 \pi i b'\}$ for some $b' \in \Z$.
	
	Therefore, $W_1$ is a positive-dimensional weakly special subvariety of $\h^n$ which is contained in the set 
	\[\{ z \in \h^n : \prod_{i=1}^n f(z_i)^{b_i} = 1\}.\] 
	In addition, $W_1$ contains a pre-image $(\tau_1, \ldots, \tau_n) \in \h^n$ of some $f$-dependent tuple $(f(\tau_1), \ldots, f(\tau_n))$. In particular, $f(W_1)$ has no two identically equal coordinates, since the $f(\tau_i)$ are pairwise distinct. Note that all the $b_i$ are non-zero, since the set $\{f(\tau_1), \ldots, f(\tau_n)\}$ is minimally multiplicatively dependent. Taking the image of $W_1$ under $f$, we therefore obtain a multiplicative dependence modulo constants among some pairwise distinct $\GL$-translates of $f$. This contradicts Condition~\ref{conj:ind}, and so we are done.
	\end{proof}

\subsection{Proof of Theorem~\ref{thm:moddep}}\label{subsec:complextuples}

Theorem~\ref{thm:moddep} follows from Proposition~\ref{prop:divconjtup} by a specialisation argument. We will need the following elementary lemma.

\begin{lem}\label{lem:minmult}
	Let $n \geq 1$. Suppose that $x_1, \ldots, x_n \in \G$ are pairwise distinct and such that
	\[x_1^{a_1} \cdots x_n^{a_n} = 1,\]
	for some $a_i \in \Z$ with $a_1 \neq 0$. Then there exists a set $S \subset \{x_1, \ldots, x_n\}$ such that $x_1 \in S$ and $S$ is minimally multiplicatively dependent.
\end{lem}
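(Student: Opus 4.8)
The plan is to prove Lemma~\ref{lem:minmult} by induction on $n$, extracting a minimally multiplicatively dependent subset that is guaranteed to contain $x_1$. First I would set up the base case: if $n=1$, then $x_1^{a_1}=1$ with $a_1 \neq 0$, so $\{x_1\}$ is itself multiplicatively dependent, and it is vacuously minimal since it has no proper nonempty subsets; take $S = \{x_1\}$.

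For the inductive step, suppose the result holds for all tuples of length less than $n$, and let $x_1, \ldots, x_n \in \G$ be pairwise distinct with $x_1^{a_1} \cdots x_n^{a_n} = 1$ and $a_1 \neq 0$. If the set $\{x_1, \ldots, x_n\}$ is already minimally multiplicatively dependent, we simply take $S = \{x_1, \ldots, x_n\}$ and are done. Otherwise, some proper subset $T \subsetneq \{x_1, \ldots, x_n\}$ is multiplicatively dependent, say $\prod_{x_i \in T} x_i^{b_i} = 1$ with the $b_i$ not all zero. I would split into two cases according to whether $x_1 \in T$. If $x_1 \in T$ and moreover the exponent of $x_1$ in this relation is nonzero, then $T$ is a tuple of length $< n$ satisfying the hypothesis of the lemma (with $x_1$ playing the distinguished role), so by induction there is a minimally dependent $S \subset T \subset \{x_1, \ldots, x_n\}$ with $x_1 \in S$, as required. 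The remaining possibility is that $x_1 \notin T$, or $x_1 \in T$ but appears with zero exponent in the chosen relation — in either case there is a nontrivial multiplicative relation supported on a set $T'$ of the $x_i$ not involving $x_1$ at all (with $|T'| < n$). Here I would eliminate: combine the original relation $x_1^{a_1} \cdots x_n^{a_n} = 1$ with the relation on $T'$ to cancel some $x_k \in T'$ (choosing $k$ so that $x_k$ occurs with nonzero exponent in the $T'$-relation); since $a_1 \neq 0$ and we only modify exponents of the $x_i$ with $i \neq 1$, raising the relations to suitable powers and multiplying produces a new relation $\prod_{i \neq k} x_i^{a_i'} = 1$ in which the exponent of $x_1$ is $(\text{nonzero}) \cdot a_1 \neq 0$. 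This is a relation on the shorter tuple obtained by deleting $x_k$, still satisfying the hypothesis with $x_1$ distinguished, so induction applies and yields the desired $S$.

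The main obstacle is bookkeeping in the elimination step: one must be careful that cancelling $x_k$ does not inadvertently kill the $x_1$-exponent, which is why it is essential that the relation we are eliminating against does not involve $x_1$ (so scaling it and adding it leaves the coefficient $a_1$ multiplied by a nonzero integer, hence still nonzero), and that $x_k$ really can be removed because its exponent in that relation is nonzero. I would also remark that $\G$ being torsion-free is not needed — the argument is purely formal in the free abelian group $\Z^n$ of exponent vectors — so the lemma holds for multiplicative dependence in any abelian group, which is the generality in which it will be applied to $\exp$-images in Section~\ref{sec:tup}.
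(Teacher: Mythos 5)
Your proof is correct and follows essentially the same route as the paper: induction on $n$, with the key step being elimination of a variable $x_k$ (appearing with nonzero exponent in a relation not involving $x_1$) against the original relation, which multiplies the $x_1$-exponent by a nonzero integer and so preserves the hypothesis for the shorter tuple. The only difference is presentational — the paper argues by contradiction, which lets it pass immediately to a dependence on $\{x_2,\ldots,x_n\}$, whereas you run the induction directly with an explicit case split — but the underlying argument is identical.
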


\begin{proof}
	We proceed by induction on $n$. If $n=1$, then the result is trivial. Now let $n > 1$. Suppose that $x_1, \ldots, x_n \in \G$ are pairwise distinct and such that
	\[x_1^{a_1} \cdots x_n^{a_n} = 1\]
	for some $a_i \in \Z$ with $a_1 \neq 0$. Suppose, for contradiction, that there is no minimally multiplicatively dependent subset of $\{x_1, \ldots, x_n\}$ which contains $x_1$. Then, in particular, the set $\{x_2, \ldots, x_n\}$ must be multiplicatively dependent. So
	\[ x_2^{b_2} \cdots x_n^{b_n} = 1\] 
	for some $b_2, \ldots, b_n \in \Z$ not all zero. Without loss of generality, we assume that $b_2 \neq 0$. We may then eliminate $x_2$ from the first equality, to obtain that
	\[ x_1^{a_1 b_2} x_3^{a_3 b_2 - a_2 b_3} \cdots x_n^{a_n b_2 - a_2 b_n} = 1.\]
	Note that $a_1 b_2 \neq 0$. Thus, by induction, there exists a set $S \subset \{x_1, x_3, \ldots, x_n\}$ such that $x_1 \in S$ and $S$ is minimally multiplicatively dependent. This gives the desired contradiction.
	\end{proof}

Now we may proceed with the proof of Theorem~\ref{thm:moddep}. For the remainder of this section, fix $f \in \C(j)$ to be a non-constant modular function satisfying the divisor condition. Write $f(z) = R(j(z))$ for some rational function $R(t) \in \C(t)$. We may assume that $R(t) \notin \alg(t)$, else we would be in the case of Proposition~\ref{prop:divconjtup}. There are $N \geq 1$ and $a = (a_0, \ldots, a_{N-1}) \in \C^N$ such that
\[R(t) = \frac{p(a, t)}{q(a,t)},\]
where
\[ p(a, t) = a_0 \prod_{i=1}^k (t - a_i)\]
and
\[ q(a, t) = \prod_{i=k+1}^{N-1} (t - a_i).\]
Note that at least one coordinate $a_i$ of $a$ must be transcendental since we have assumed that $R(t) \notin \alg(t)$. We may also assume that the polynomials $p(a, t), q(a, t) \in \C[t]$ are relatively prime in $\C[t]$.

For $a' \in \C^N$, define $f_{a'} \colon \h \to \C$ by
\[f_{a'}(z) = \frac{p(a', j(z))}{q(a', j(z))}.\]
So, in particular, $f =f_a$. Note that, for every $a' \in \C^N$, the function $f_{a'}$ is a modular function. Clearly, if $a' \in \alg^N$, then $f_{a'} \in \alg(j)$. We need the following fact about the divisor condition.

\begin{lem}\label{lem:open}
	Let $W \subset \C^N$ be the smallest affine variety over $\alg$ such that $a \in W$. Suppose the divisor condition holds for $f$. Then there exists an open neighbourhood $U$ of $a$ such that, for every $a' \in W \cap U$, the divisor condition holds for the modular function $f_{a'}$.
\end{lem}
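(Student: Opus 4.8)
The plan rests on two inputs: that $a$ is a \emph{generic} point of $W$ over $\alg$, so that any $P\in\alg[x_1,\dots,x_N]$ (in particular any $P\in\Q[x_1,\dots,x_N]$) with $P(a)=0$ vanishes on all of $W$; and the fact that $\im\,(j|_{F_j})^{-1}(v)=\max\{\im z : z\in\h,\ j(z)=v\}$ depends continuously on $v$, which is clear from the tessellation of $\h$ by $\SL$-translates of $F_j$. First I would shrink $U$ so that for $a'\in U$ the polynomials $p(a',t),q(a',t)\in\C[t]$ are coprime and $a_0'\neq 0$ — possible since the non-vanishing of $\mathrm{Res}(p(a,t),q(a,t))$ and of $a_0$ are Zariski-open conditions satisfied at $a$ — so that $f_{a'}$ is then a non-constant modular function whose zeros and poles in $F_j$ are exactly the points $(j|_{F_j})^{-1}(a_\ell')$, $\ell=1,\dots,N-1$. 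Since $f=f_a$ satisfies the divisor condition it has a \emph{unique} zero or pole $w_r$ of maximal imaginary part in $F_j$ (if $w\neq w'$ were two such with $\re w<\re w'$, then $w+s=w'$ for $s=\re w'-\re w\in(0,1)$, contradicting the divisor condition); write $v^*=j(w_r)=a_{i_0}$. Genericity now gives $a_\ell'=a_{i_0}'$ on all of $W$ whenever $a_\ell=v^*$, and $a_{i_0}'=v^*$ on all of $W$ if $v^*\in\Q$. Combining this with the continuity above and the fact that $\im w_r$ strictly exceeds the imaginary parts of the remaining zeros and poles of $f$ in $F_j$, I would conclude, after shrinking $U$ again, that for every $a'\in W\cap U$ the point $w_r':=(j|_{F_j})^{-1}(a_{i_0}')$ is the unique zero or pole of $f_{a'}$ in $F_j$ of maximal imaginary part, so the divisor condition for $f_{a'}$ amounts to $j(w_r'+s)\notin\{a_1',\dots,a_{N-1}'\}$ for all $0<s<1$.

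I would split according to whether $v^*\in\Q$. If $v^*\in\Q$, then $a_{i_0}'=v^*$ throughout $W$, so $w_r'=w_r$ is fixed, and the claim reduces to comparing the divisor condition for $f$ (namely $j(w_r+s)\notin\{a_1,\dots,a_{N-1}\}$ for $0<s<1$) with its perturbation. On any $[\epsilon,1-\epsilon]$ the values $j(w_r+s)$ are bounded away from the finite set $\{a_1,\dots,a_{N-1}\}$, hence from $\{a_1',\dots,a_{N-1}'\}$ for $a'$ near $a$; for $s$ near $0$ or $1$, $j(w_r+s)$ lies near $v^*$ and is $\neq v^*$ (else $w_r+s$ would be a zero or pole of $f$, by $1$-periodicity of $j$), so it avoids every $a_\ell'$ with $a_\ell\neq v^*$ (these stay away from $v^*$) and every $a_\ell'$ with $a_\ell=v^*$ (these equal $v^*$). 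This settles the case; in particular it covers $w_r\in\{i,\zeta_6\}$, where one could alternatively invoke Remark~\ref{rmk:div} if $w_r=i$, and the observation that $\zeta_6$ is the only point of $F_j$ of imaginary part $\leq\sqrt3/2$ (which forces $f_{a'}=c'j^{m'}$) if $w_r=\zeta_6$.

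If $v^*\notin\Q$, then $w_r\notin\{i,\zeta_6\}$, so $j'(w_r)\neq0$, though now $w_r'$ may genuinely vary with $a'$. The interval $[\epsilon,1-\epsilon]$ is again handled by uniform continuity. For $s$ near $0$ I would write $j(w_r'+s)-a_{i_0}'=\int_0^s j'(w_r'+t)\,dt$; since $j'$ is continuous and nonvanishing near $w_r$ and all its $\SL$-translates (using $j'(\gamma z)=(cz+d)^2 j'(z)$), this integral has modulus bounded below by a positive multiple of $s$ for $a'$ near $a$, hence is nonzero for $0<s\leq\epsilon$; periodicity of $j$ handles $s$ near $1$. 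As before $j(w_r'+s)$ stays near $v^*$, so it differs from every $a_\ell'$ with $a_\ell\neq v^*$, while for $a_\ell=v^*$ we have $a_\ell'=a_{i_0}'$ by genericity, already excluded. Hence the divisor condition holds for $f_{a'}$.

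The main obstacle is the bookkeeping near $\partial F_j$: when $w_r$ lies on the identified part of the boundary, $(j|_{F_j})^{-1}$ is discontinuous at $v^*$, so $w_r'$ need not converge to $w_r$ but only subsequentially to $\SL$-translates of it. The argument survives because the imaginary part still converges, the conditions used ($j'(w_r)\neq0$, $v^*\in\Q$) are $\SL$-orbit invariants, and $s\mapsto j(w_r'+s)$ still traces a loop lying uniformly close to the loop $s\mapsto j(w_r+s)$ by $1$-periodicity of $j$; making this fully precise is the point requiring care. It is also precisely the hypothesis $a'\in W$ that prevents a nearby $a'$ from splitting $w_r$ into a cluster of zeros and poles at the top — which would break the divisor condition, and does occur for $a'\notin W$, consistent with Remark~\ref{rmk:necessary}.
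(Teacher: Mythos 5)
Your proposal is correct and takes essentially the same route as the paper: the paper's (very terse) proof just notes that for $a'$ near $a$ the divisor condition can only fail if coincident roots split ($a_i=a_k$ but $a_i'\neq a_k'$) or if some $a_i\in\{0,1728\}$ moves, both impossible for $a'\in W$ by minimality of $W$, and your argument spells out exactly this perturbation stability, with your $v^*\in\Q$ versus $v^*\notin\Q$ dichotomy (the latter giving $j'(w_r)\neq 0$) playing the role of the paper's second exception. As for the boundary bookkeeping you flag at the end, note it evaporates: in the unramified case the divisor condition for $f$ forces $w_r$ off the arc $\lvert z\rvert=1$ (otherwise $-\overline{w_r}$ is a zero or pole at the same height at horizontal distance in $(0,1)$), so the only identification that can affect $w_r'$ is the translation $z\mapsto z-1$, and your loop comparison is then immediate from $1$-periodicity.
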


\begin{proof}
	Let $U$ be an open neighbourhood of $a$ and take $a' = (a_0', \ldots, a_{N-1}') \in U$. Provided $U$ is chosen suitably small, the divisor condition will hold for the modular function $f_{a'}(z)$, unless possibly one of the following happens:
	\begin{enumerate}
		\item $a_i = a_k$, but $a_i' \neq a_k'$ for some $i, k$;
		\item $a_i \in \{0, 1728\}$ and $a_i' \neq a_i$ for some $i$.
	\end{enumerate}
Both these possibilities are ruled out though if $a' \in W$.
	\end{proof}

We now complete the proof of Theorem~\ref{thm:moddep}.

\begin{proof}[Proof of Theorem~\ref{thm:moddep}]
 Fix $n \geq 1$. Let $Y$ be the set of $j$-special points. Let $Z$ be the set of $y = (y_1, \ldots, y_n) \in Y^n$ such that the $f$-special points $R(y_1), \ldots, R(y_n)$ are pairwise distinct and the set $\{R(y_1), \ldots, R(y_n)\}$ is minimally multiplicatively dependent. We need to show that the set $Z$ is finite. Suppose then, for contradiction, that $Z$ is infinite. Then, in particular, the set
	\[ \tilde{Z} = \{ \sigma \in Y : \exists y \in Z \mbox{ such that } \sigma = \pi_i(y) \mbox{ for some } i=1, \ldots, n \}\]
	is infinite, where $\pi_i \colon Y^n \to Y$ denotes the projection map to the $i$th coordinate. 
	
	If $y = (y_1, \ldots, y_n)\in Z$, then there are $m_1, \ldots, m_n \in \Z \setminus \{0\}$ such that
	\[p(a, y_1)^{m_1} \cdots p(a, y_n)^{m_n} = q(a, y_1)^{m_1} \cdots q(a, y_n)^{m_n}. \]
	Multiplying by those factors with $m_i < 0$, we get a corresponding polynomial equality, which we denote $(*)$. (Note that the equality ($*$) depends on $y$, but is always of the same form.) In addition, one has that
	\begin{equation}\label{eq:nonzero}
		p(a, y_i) \neq 0 \mbox{ and } q(a, y_i) \neq 0 \mbox{ for } i=1, \ldots, n\tag{$**$}
		\end{equation}
	for all $y \in Z$. Since $R(y_1), \ldots, R(y_n)$ are pairwise distinct, one also has that
	\begin{equation}\label{eq:distinct}
		p(a, y_i) q(a, y_k) - p(a, y_k) q(a, y_i) \neq 0\tag{$***$}
		\end{equation}
	for all $1 \leq i < k \leq n$ and $y \in Z$.
	
	Let $W \subset \C^N$ be the smallest affine variety over $\alg$ such that $a \in W$. Let $K \subset \C$ be a number field over which $W$ is defined. If $\dim W = 0$, then we immediately contradict Proposition~\ref{prop:divconjtup}. So we may assume that $\dim W \geq 1$. Note that, by Lemma~\ref{lem:open}, there is some open neighbourhood $U \subset \C^N$ such that $a \in U$ and, for every $a' \in U \cap W$, the divisor condition holds for the modular function $f_{a'}$.
	
	Suppose now that $\dim W = 1$.  Given some $y = (y_1, \ldots, y_n) \in Z$, a condition $p(x, y_i) = 0$ then defines a finite subset of $W$, of size at most $d$ say. Now $y_i$ is a $j$-special point, so is contained in a finite extension of $\Q$ which is ``dihedral'' (see e.g. \cite[p.~191]{GrossZagier85}) and hence, in particular, solvable. So any $a'$ satisfying the equation $p(a', y_i) = 0$ lies in an extension of $K$ which has an index $\leq d$ solvable subextension (and similarly for an equation $q(a', y_i) = 0$ or an equation $p(a', y_i) q(a', y_k) - p(a', y_k) q(a', y_i) = 0$ where $i \neq k$).
	
	Recall that $a$ has at least one transcendental coordinate, say the first coordinate. Let $\pi_1 \colon W \to \C$ be the projection map to the first coordinate. Then $\pi_1$ is an open mapping at $a$, so $\pi_1(W \cap U)$ contains an open neighbourhood $U' \subset \C$ such that $\pi_1(a) \in U'$. If we find $\alpha \in U'$ with Galois group $G$ (over $K$), then any $a' \in W \cap U$ with $\pi_1(a') = \alpha$ will be algebraic and have a Galois group which contains $G$ as a subgroup.
	
	Fix some $b \in U' \cap K(i)$. Let $c$ be any algebraic number whose Galois group over $K(i)$ is isomorphic to the alternating group $A_m$ (for some suitably large $m$); such a $c$ exists by \cite[Corollary~12]{Brink04}. Fix some $r \in \Q$ with $r > 0$ and set $\alpha = b + r c$. Provided $r$ is chosen suitably small, we have that $\alpha \in U'$. The Galois group of $\alpha$ over $K(i)$ is isomorphic to $A_m$, and hence the Galois group of $\alpha$ over $K$ contains a subgroup isomorphic to $A_m$.
	
	Now fix an algebraic $a' \in W \cap U$ with $\pi_1(a') = \alpha$. Then the Galois group of $a'$ over $K$ contains a subgroup isomorphic to $A_m$. In particular, for every $y = (y_1, \ldots, y_n) \in Z$, one has that
	\[p(a', y_i) q(a', y_i) \neq 0\] 
	for all $1 \leq i \leq n$ and
	\[p(a', y_i) q(a', y_k) - p(a', y_k) q(a', y_i) \neq 0 \]
	for all $1 \leq i < k \leq n$. Note also that, for each $y = (y_1, \ldots, y_n) \in Z$, the respective dependence $(*)$ holds with $a'$ in place of $a$, since $a' \in W$. 
	
Define the rational function $R_{a'} \in \alg(t)$ by 
\[R_{a'}(t) = \frac{p(a', t)}{q(a', t)},\]
so that $f_{a'}(z) = R_{a'}(j(z))$. For each $1 \leq k \leq n$, let $A_k$ be the set of $y = (y_1, \ldots, y_k) \in Y^k$ such that the $f_{a'}$-special points $R_{a'}(y_1), \ldots, R_{a'}(y_k)$ are pairwise distinct and the set $\{ R_{a'}(y_1), \ldots, R_{a'}(y_k) \}$ is minimally multiplicatively dependent. Then let
	 \[ \tilde{A_k} = \{ \sigma \in Y : \exists y \in A_k \mbox{ such that } \sigma = \pi_i(y) \mbox{ for some } i=1, \ldots, k \},\]
	 where $\pi_i \colon Y^k \to Y$ is the projection map to the $i$th coordinate as usual.
	 	
Let $y = (y_1, \ldots, y_n) \in Z$. Then, by the above argument, $R_{a'}(y_1), \ldots, R_{a'}(y_n)$ are pairwise distinct $f_{a'}$-special points and 
	\[ R_{a'}(y_1)^{m_1} \cdots R_{a'}(y_n)^{m_n} =1,\]
	for some $m_1, \ldots, m_n \in \Z \setminus \{0\}$. 
	The set $\{R_{a'}(y_1), \ldots, R_{a'}(y_n)\}$ is not necessarily minimally multiplicatively dependent, but we may apply Lemma~\ref{lem:minmult} since $R_{a'}(y_1), \ldots, R_{a'}(y_n)$ are pairwise distinct and $m_1, \ldots, m_n \neq 0$. We thereby obtain that, for each $i =1, \ldots, n$, there exists a minimally multiplicatively dependent subset of $\{R_{a'}(y_1), \ldots, R_{a'}(y_n)\}$ which contains $R_{a'}(y_i)$. In particular, we see that
	 \[y_1, \ldots, y_n \in \bigcup_{k=1}^{n} \tilde{A_k}.\]
	 Consequently, $\tilde{Z} \subset \bigcup_{k=1}^{n} \tilde{A_k}$, and so the set $\bigcup_{k=1}^{n} \tilde{A_k}$ must be infinite.
	 
	 This though cannot happen. Since $a' \in U \cap W$, the divisor condition holds for the modular function $f_{a'}$ by Lemma~\ref{lem:open}. Also, $f_{a'} \in \alg(j)$ since $a'$ is algebraic. Therefore, by Proposition~\ref{prop:divconjtup}, for each $1 \leq k \leq n$, there are only finitely many $k$-tuples $(\sigma_1, \ldots, \sigma_k)$ of pairwise distinct $f_{a'}$-special points such that the set $\{\sigma_1, \ldots, \sigma_k\}$ is minimally multiplicatively dependent. Consequently, the sets $A_1, \ldots, A_n$ must all be finite, since the rational function $R_{a'}$ is finite-to-one. So the sets $\tilde{A_1}, \ldots, \tilde{A_n}$ are all finite too. Thus, the set $\bigcup_{k=1}^{n} \tilde{A_k}$ is also finite, and this gives us the desired contradiction.
	
	So we are left with the case where $\dim W > 1$. For $y \in Z$, define
	\begin{align*} 
		W_y =& \{ w \in W : p(w, y_i) q(w,y_k) -p(w, y_k) q(w, y_i)= 0 \mbox{ for some } 1 \leq i < k \leq n\}\\
		&\cup \{ w \in W : p(w, y_i) q(w,y_i) = 0 \mbox{ for some } i= 1, \ldots, n\}. 
	\end{align*}
	Then $a \notin W_y$ by \eqref{eq:nonzero} and \eqref{eq:distinct}, and so each $W_y$ is a proper subvariety of $W$. The polynomials $p, q$ are of bounded degree, so all the $W_y$ have bounded degree.
	
	Let $W'$ be an irreducible subvariety of $W$ of codimension $1$, defined over $\alg$, and of degree higher than any of the $W_y$. By the theorem of Bertini, such a $W'$ arises as the intersection of $W$ with a sufficiently general hypersurface $H$ in $\C^N$ of large enough degree, defined over $\alg$. The subvariety $W'$ is not contained in any $W_y$. Given $y \in Z$, the corresponding equation $(*)$ holds with any $a' \in W'$ in place of $a$. For a generic $a' \in W'$, the inequalities \eqref{eq:nonzero} and \eqref{eq:distinct} hold for every $y \in Z$. The union of all $H \cap W$ for $H$ as in Bertini's theorem is a constructible, Zariski dense subset of $W$, and therefore contains a Zariski open subset of $W$. We may thus assume that $H$ is such that $H \cap W \cap U$ is non-empty and of codimension $1$ in $W \cap U$.
	
	We now continue this process inductively. Eventually, we thereby obtain a variety $W^*$ which is defined over $\alg$, has dimension $1$, and also satisfies the following two conditions. First, for every $y \in Z$, the corresponding equation $(*)$ holds with any $a' \in W^*$ in place of $a$. Second, for a generic $a' \in W^*$, the inequalities \eqref{eq:nonzero} and \eqref{eq:distinct} hold for every $y \in Z$. One thereby reduces to the above dimension $1$ case, and the proof is complete.
	\end{proof}

\begin{remark}\label{rmk:divnec}
	Observe that the above proof depends crucially on the specific properties of the divisor condition which are embodied in Lemma~\ref{lem:open}. This allows us to specialise to a modular function in $\alg(j)$ that also satisfies the divisor condition. In particular, one could not use the argument in this section to prove a result corresponding to Proposition~\ref{prop:conjtup} for arbitrary non-constant modular functions $f \in \C(j)$ satisfying Condition~\ref{conj:ind} (but not necessarily the divisor condition).
\end{remark}

We end this section by showing that the divisor condition is not necessary for the $n = 1$ case of Theorem~\ref{thm:moddep}, i.e. the case of $f$-special points which are also roots of unity. (Note it is well-known that no $j$-special point is a root of unity, see e.g. \cite[p.~1365]{PilaTsimerman17}.)

\begin{prop}\label{prop:moddep1}
	Let $f \in \C(j)$ be non-constant. Then there are only finitely many $f$-special points $\sigma$ such that $\sigma$ is also a root of unity.
\end{prop}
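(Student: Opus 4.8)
The plan is as follows. Write $f(z) = R(j(z))$ for a non-constant $R \in \C(t)$; the $f$-special points are exactly the values $R(x)$ at those $j$-special points $x$ lying in the domain of $R$, so it suffices to show that $R(x)$ is a root of unity for only finitely many $j$-special points $x$.

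First I would reduce to the case $f \in \alg(j)$, i.e.\ $R \in \alg(t)$. Suppose, for contradiction, that there are infinitely many $j$-special points $x_1, x_2, \ldots$ with each $R(x_i)$ a root of unity $\zeta_i$; since $R$ is finite-to-one, the $\zeta_i$ take infinitely many distinct values. Regard $R$ as $R_{\mathbf{a}}$, where $\mathbf{a}$ is the tuple of coefficients of a fixed numerator and denominator for $R$, chosen so that both depend linearly on $\mathbf{a}$. For each $i$, the identity $(\text{numerator of }R_{\mathbf{a}})(x_i) = \zeta_i \cdot (\text{denominator of }R_{\mathbf{a}})(x_i)$ defines an affine-linear subvariety of the coefficient space, defined over $\Q(x_i, \zeta_i) \subseteq \alg$ since $x_i$ and $\zeta_i$ are algebraic. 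The coefficient tuple of $R$ lies in the intersection $W$ of all of these, a nonempty $\alg$-rational affine subspace. The locus $D$ where numerator and denominator are linearly dependent over $\C$ is Zariski-closed and, as $R$ is non-constant, does not contain the coefficient tuple of $R$; hence $W \setminus D$ is a nonempty Zariski-open subset of $W$, and --- $W$ being $\alg$-rational --- it contains an $\alg$-point $\mathbf{a}'$. Then $S := R_{\mathbf{a}'}$, put in lowest terms, is a non-constant element of $\alg(t)$, and $S(x_i) = \zeta_i$ for all but the finitely many $i$ at which the cancelled common factor of numerator and denominator vanishes. So the modular function $z \mapsto S(j(z))$ lies in $\alg(j)$, is non-constant, and has infinitely many special points that are roots of unity, which reduces us to the algebraic case.

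Second, I would treat $S \in \alg(t)$ non-constant, with coefficients in a number field $K$. Suppose $\sigma$ is an $S(j)$-special point which is a root of unity; set $\Delta = \Delta(\sigma)$ and fix a $j$-special point $x$ of discriminant $\Delta$ with $S(x) = \sigma$. Then $\Q(\sigma)$ is cyclotomic, hence abelian over $\Q$, while $x$ lies in the ring class field $H_\Delta$ of the imaginary quadratic order of discriminant $\Delta$, so $\sigma \in K(x) \subseteq K \cdot H_\Delta$. Since $\Q(\sigma)/\Q$ is Galois, $[\Q(\sigma) : \Q(\sigma)\cap H_\Delta] = [\Q(\sigma)H_\Delta : H_\Delta] \leq [K : \Q]$; and $\Q(\sigma)\cap H_\Delta$, being abelian over $\Q$ and contained in $H_\Delta$, lies in the maximal subextension of $H_\Delta/\Q$ abelian over $\Q$, whose degree over $\Q$ is bounded by $2^{\omega(|\Delta|)+O(1)}$ by genus theory (see \cite{Cox89}) and so is $\ll_\epsilon |\Delta|^\epsilon$ for every $\epsilon > 0$. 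Hence $[\Q(\sigma) : \Q] \ll_{\epsilon, S} |\Delta|^\epsilon$. On the other hand, Lemma~\ref{lem:degspec} (which applies since $S(j) \in \alg(j)$) gives $[\Q(\sigma):\Q] \gg_{\epsilon, S} |\Delta|^{1/2-\epsilon}$. With $\epsilon$ chosen small, these are incompatible once $|\Delta|$ is large; thus $|\Delta(\sigma)|$ is bounded as $\sigma$ ranges over all such points. Since there are only finitely many $j$-special points of bounded discriminant and $S$ is finite-to-one, there are only finitely many such $\sigma$ --- contradicting the reduction, and finishing the proof.

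The main obstacle is the arithmetic fact that an $f$-special point $R(x)$ which happens to be a root of unity must generate only a \emph{small} (size $|\Delta|^{o(1)}$) extension of $\Q$: this rests on the fact that the ring class field $H_\Delta$ has a small abelian-over-$\Q$ part, in contrast with the $|\Delta|^{1/2-o(1)}$ lower bound on $[\Q(R(x)):\Q]$ coming from the Galois orbit size of a singular modulus. By comparison the reduction to $\alg(t)$ is soft --- and, crucially, much simpler than the Bertini-style specialisation in the proof of Theorem~\ref{thm:moddep}, because the condition ``$R(x)$ is a prescribed root of unity'' is linear in the coefficients of $R$.
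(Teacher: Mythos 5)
Your proof is correct, but it takes a genuinely different route from the paper's at both stages. For the key algebraic case the paper argues via heights: Kronecker's theorem gives $h(\sigma)=0$, functoriality of heights for the morphism $R \colon \pr \to \pr$ (Hindry--Silverman, Theorem~B.2.5) bounds $h(x)$ by a constant depending only on $R$, and Habegger's lower bound $h(x) \geq c_1 \log\lvert\Delta\rvert - c_2$ for singular moduli then bounds $\lvert\Delta\rvert$ directly. You instead compare degrees: the Galois-orbit lower bound $[\Q(\sigma):\Q] \gg_{\epsilon} \lvert\Delta\rvert^{1/2-\epsilon}$ of Lemma~\ref{lem:degspec} against the upper bound $\ll_{\epsilon} \lvert\Delta\rvert^{\epsilon}$ you extract from the fact that $\Q(\sigma)$ is abelian over $\Q$ while $\Q(\sigma)\cap H_\Delta$ must lie in the genus field, the maximal abelian-over-$\Q$ subfield of the ring class field, of degree $2^{\omega(\lvert\Delta\rvert)+O(1)}$ (here the inclusion $\Q(x) \subseteq H_\Delta$ from CM theory is what makes the chain $[\Q(\sigma):\Q] = [\Q(\sigma):\Q(\sigma)\cap H_\Delta]\,[\Q(\sigma)\cap H_\Delta:\Q] \leq [K:\Q]\cdot 2^{\omega(\lvert\Delta\rvert)+O(1)}$ work, as you note). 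This exploits the dihedral structure of $H_\Delta/\Q$ which the paper only invokes later, in the specialisation argument for Theorem~\ref{thm:moddep}; the trade-off is that your route leans on the Siegel-based (ineffective) degree bounds of Lemma~\ref{lem:degspec}, whereas the paper's height argument needs no Galois-orbit lower bound at all. For the reduction to $\alg(j)$, the paper simply observes that a rational function taking algebraic values at infinitely many algebraic points is itself defined over $\alg$; your linear specialisation of the coefficient tuple proves essentially this same standard fact by hand (indeed, since $S(x_i)=R(x_i)=\zeta_i$ at infinitely many points, the polynomial identity $P_{\mathbf{a}'}Q - P Q_{\mathbf{a}'} \equiv 0$ forces $S = R$, so you recover the paper's stronger conclusion), so it is correct, just more elaborate than necessary. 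Both bounding arguments conclude identically: bounded $\lvert\Delta(\sigma)\rvert$, finitely many singular moduli of bounded discriminant, hence finitely many such $\sigma$.
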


\begin{proof}
	We treat first the algebraic case. Let $f \in \alg(j)$ be non-constant. Then we may write $f(z) = R(j(z))$ for some rational function $R$ with algebraic coefficients. Suppose $x$ is a $j$-special point such that $R(x)$ is a root of unity. By Kronecker's theorem we have that $h(R(x)) = 0$. Viewing the rational function $R$ as a morphism $\mathbb{P}^1 \to \mathbb{P}^1$ defined over $\alg$, one may use \cite[Theorem~B.2.5]{HindrySilverman00} to obtain that $h(x) \leq c$, where $c$ is a constant depending only on $R$. Write $\Delta$ for the discriminant of the $j$-special point $x$. Then \cite[Lemma~3]{Habegger15} gives that 
	\[h(x) \geq c_1 \log \lvert \Delta \rvert - c_2,\]
	for some absolute constants $c_1, c_2 > 0$. Putting these two inequalities together, we obtain that $\lvert \Delta \rvert$ must be bounded above by a constant depending only on $R$. There are thus only finitely many $j$-special points $x$ such that $R(x)$ is a root of unity. Hence, there are only finitely many $f$-special points which are also roots of unity.
	
Now let $f(z)=R(j(z))$ for an arbitrary non-constant rational function $R$. Suppose that there are infinitely many $f$-special points which are also roots of unity. Since $j$-special points are algebraic, there are then infinitely many $x \in \alg$ such that $R(x) \in \alg$. Hence $R$ must be defined over $\alg$, and so we contradict the above algebraic case of the proposition.
	\end{proof}

\section{The Zilber--Pink conjecture}\label{sec:ZP}

\subsection{The Zilber--Pink setting}\label{subsec:ZPmult}

In this section, we consider how Theorem~\ref{thm:moddep} relates to the Zilber--Pink conjecture. Different versions of this conjecture were formulated by Zilber \cite{Zilber02}, Pink \cite{Pink05}, and Bombieri, Masser and Zannier \cite{BombieriMasserZannier07}.

Let $f \in \C(j)$ be non-constant, so that $f(z) = R(j(z))$ for some non-constant rational function $R$ with coefficients in $\C$. As in Section~\ref{sec:ax}, for $n \geq 1$, let $X =X_{n,n} = Y(1)^n \times \G^n$, $U = \h^n \times \C^n$, and $\pi \colon U \to X$ be given by
\[\pi(z_1, \ldots, z_n, u_1, \ldots, u_n) = (j(z_1), \ldots, j(z_n), \exp(u_1), \ldots, \exp(u_n)).\]
We define (weakly) special subvarieties of $U, X$ as in Definition~\ref{def:special}. Now, as in Subsection~\ref{subsec:conjtup}, let 
\[V= V_n = \{(x_1, \ldots, x_n, t_1, \ldots, t_n) \in X : t_i = R(x_i) \mbox{ for } i=1, \ldots, n\} \subset X.\]
We refer to Definition~\ref{def:atyp} for the definition of an atypical component of $V$. The version of the Zilber--Pink conjecture for $V$ we adopt is the following.

\begin{conj}[Zilber--Pink conjecture]\label{conj:ZP}
	There are only finitely many maximal atypical components of $V$.
\end{conj}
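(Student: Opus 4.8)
The plan is to reduce Conjecture~\ref{conj:ZP} to the combination of the Ax--Schanuel statement (Theorem~\ref{thm:AxSopt}) and the finiteness result Theorem~\ref{thm:moddep}, following the general strategy by which Zilber--Pink-type statements are deduced from optimality/functional transcendence together with an arithmetic input. First I would pass to the formulation in terms of optimal components: by the standard dictionary (as in \cite[Section~5]{HabeggerPila16}), the maximal atypical components of $V$ are, up to a controlled correspondence, the optimal components of $V$ of positive defect, so it suffices to show that $V$ has only finitely many optimal components that are atypical. By Theorem~\ref{thm:AxSopt}, every optimal component $A$ is geodesic, i.e. its Zariski closure in $U$ is a weakly special subvariety $U' \subset U$; write $X' = \pi(U')$, a weakly special subvariety of $X$. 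Thus each optimal atypical component arises inside a weakly special $X' = M' \times T'$ with $M' \subset Y(1)^n$ weakly special and $T' \subset \G^n$ weakly special.

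The next step is to analyse the possible weakly special $X'$ and split into cases according to the shape of $M'$ and $T'$. The key point is that $V$ is defined by the equations $t_i = R(x_i)$, which tie each $\G$-coordinate to the corresponding $Y(1)$-coordinate; so the intersection $V \cap X'$ is atypical only when the constraints imposed by $X'$ force a \emph{multiplicative} relation among the coordinates $R(x_i)$ that is not already implied by the $M'$-constraints on the $x_i$. Concretely, after using the $M'$-equations (which identify some $x_i$ with $\GL$-translates of others, $x_i = g x_k$, or fix some $x_i$ to constants), one is left on the component $A$ with a genuine multiplicative dependence $\prod f(z_i)^{b_i} = c$ among pairwise distinct $\GL$-translates of $f$ restricted to the free coordinates, together with a residual weakly special constraint $T'$ on the remaining $\G$-coordinates. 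By Theorem~\ref{thm:modind} (valid because $f$ satisfies the divisor condition), no such nontrivial multiplicative dependence among pairwise distinct $\GL$-translates can hold \emph{identically}; hence the atypical intersection cannot be forced purely functionally, and the fixed (constant) coordinates must do the work. This pins the constant $Y(1)$-coordinates of $M'$ down to a finite set of possibilities unless the component is in fact contained in a smaller one, contradicting optimality --- so the combinatorial data of $X'$ (which $x_i$ are equated, which are constant, the exponent vectors defining $T'$) ranges over a set that, modulo the constant values, is finite.

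The final step is to control the constant coordinates themselves. The constant $Y(1)$-coordinates of a special $X'$ meeting $V$ atypically must be $j$-special points, and the corresponding constant $\G$-coordinates that appear are, by the equations of $V$, the $f$-special points $R$ of those $j$-special points; the atypicality of the intersection, after the functional analysis above, translates precisely into a nontrivial multiplicative relation among a tuple of distinct $f$-special points. Passing to a minimal such relation via Lemma~\ref{lem:minmult}, one obtains a minimally multiplicatively dependent tuple of distinct $f$-special points, of which there are only finitely many by Theorem~\ref{thm:moddep}. Combined with the finiteness of the combinatorial data, this yields finitely many maximal atypical components. The main obstacle I expect is the bookkeeping in the second step: carefully showing that an atypical intersection of $V$ with a weakly special $X'$ \emph{must} involve a multiplicative relation among the constant $f$-special values (rather than being explained by the translate-identifications alone), i.e. correctly separating the ``functional'' part --- ruled out by Theorem~\ref{thm:modind} --- from the ``arithmetic'' part governed by Theorem~\ref{thm:moddep}, and handling the mixed case where $T'$ is a nontrivial (non-special) weakly special subvariety of $\G^n$ so that the relevant multiplicative relation is only modulo constants. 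This is exactly the kind of argument carried out for the pure $j$-case in \cite{PilaTsimerman17}, and the reduction here should follow the same lines, with Theorem~\ref{thm:modind} and Theorem~\ref{thm:moddep} supplying the two inputs in place of their $j$-function analogues.
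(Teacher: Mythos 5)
The statement you are trying to prove is stated in the paper as a \emph{conjecture}, and the paper does not prove it: it explicitly says it can only establish the cases $n=1$ (Proposition~\ref{prop:ZP1}) and, for $f\in\alg(j)$ satisfying Condition~\ref{conj:ind}, $n=2$ (Propositions~\ref{prop:ZP2}, \ref{prop:modpairs}, \ref{prop:uncondZP2}). Moreover, the logical direction in the paper is the opposite of yours: Proposition~\ref{prop:ZPimplies} deduces the finiteness of multiplicatively dependent tuples of $f$-special points \emph{from} Conjecture~\ref{conj:ZP} (plus Condition~\ref{conj:ind}); it does not and cannot go the other way, because Theorem~\ref{thm:moddep} controls only one very particular kind of atypical component, namely the isolated points coming from tuples of distinct, minimally multiplicatively dependent $f$-special points.

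The concrete gap in your argument is the step where you claim that, after Ax--Schanuel rules out identically-holding multiplicative relations among distinct $\GL$-translates of $f$, ``the fixed (constant) coordinates must do the work'' and these are pinned to $j$-special points, so that everything reduces to Theorem~\ref{thm:moddep} plus finite combinatorial data. Atypical components of $V$ need not have their $Y(1)$-coordinates at special points at all: already for $n=2$ the paper must deal with modular--torsion tuples $(x_1,x_2,\zeta_1,\zeta_2)$, where $x_1,x_2$ satisfy a modular relation, are \emph{not} $j$-special, and $R(x_1),R(x_2)$ are roots of unity (Definition~\ref{def:modtors}); these give maximal atypical components that Theorem~\ref{thm:moddep} says nothing about, and their finiteness requires a separate, delicate point-counting plus Ax--Schanuel argument (Lemma~\ref{lem:modpairsdeg} and Proposition~\ref{prop:modpairs}), which moreover works only for $f\in\alg(j)$. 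For general $n$ there are many more such ``mixed'' atypical components (several modular relations among non-special coordinates combined with several multiplicative conditions), and bounding them is exactly the hard arithmetic content --- a large-Galois-orbit type input for isogeny and torsion data of unbounded complexity --- that neither Theorem~\ref{thm:modind} nor Theorem~\ref{thm:moddep} supplies. Ax--Schanuel (Theorem~\ref{thm:AxSopt}) only tells you that optimal components are geodesic; it does not make the family of candidate special subvarieties $T$ finite, and your appeal to ``finiteness of the combinatorial data'' silently assumes the arithmetic finiteness that is the whole open problem. This is precisely why the statement remains a conjecture in the paper.
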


The full Zilber--Pink conjecture is the corresponding statement for an arbitrary subvariety $V$ of a mixed Shimura variety $X$. We restrict ourselves to considering the conjecture for a certain class of subvarieties $V$ of the mixed Shimura variety $Y(1)^n \times \G^n$. We consider the conjecture when $V$ is (the Cartesian product of) the graph of a particular kind of function $\phi \colon Y(1) \to \G$. When $V$ is just the ``diagonal'' subvariety of $X$ 
\[V = \{(x_1, \ldots, x_n, t_1, \ldots, t_n) \in X : x_i = t_i \mbox{ for } i=1,\ldots, n\},\]
then the Zilber--Pink conjecture in this setting was explored in \cite{PilaTsimerman17}. In this section, we extend these results to some cases where $V$ is (the Cartesian product of) the graph of a rational function $R \colon Y(1) \to \G$, rather than just the identity map.

Problems of a related kind were also considered by Pila and Tsimerman \cite{PilaTsimerman19}. They looked at the graph of a map $\phi \colon Y \to E$, where $Y$ is a modular or Shimura curve and $E$ is an elliptic curve. $Y \times E$ is not itself a mixed Shimura variety (unless $E$ has CM), but it is a weakly special subvariety of a mixed Shimura variety. A version of the Zilber--Pink conjecture may thus be formulated for the subvariety of $Y^n \times E^n$ given by the graph of $\phi$.

The Zilber--Pink conjecture does not by itself imply the finiteness of $n$-tuples of distinct $f$-special points that are multiplicatively dependent and minimal for this property. To obtain this result from the Zilber--Pink conjecture, one needs in addition the functional independence statement Condition~\ref{conj:ind}. This is required to prove that every such $n$-tuple leads to a maximal atypical component of $V$. 

\begin{lem}\label{lem:atyp}
Let $f \in \C(j)$ be non-constant. Suppose that Condition~\ref{conj:ind} holds for $f$. An $n$-tuple $\sigma = (\sigma_1, \ldots, \sigma_n)$ of distinct $f$-special points such that the set $\{\sigma_1, \ldots, \sigma_n\}$ is minimally multiplicatively dependent gives rise to an atypical point $\hat{\sigma} \in V$ which is not contained in any atypical component of $V$ of positive dimension.
\end{lem}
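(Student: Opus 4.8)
The plan is to take the given $n$-tuple $\sigma = (\sigma_1, \ldots, \sigma_n)$ and first produce the point $\hat\sigma \in V$: since each $\sigma_i$ is an $f$-special point, there is a $j$-special point $x_i$ with $R(x_i) = \sigma_i$, and we set $\hat\sigma = (x_1, \ldots, x_n, \sigma_1, \ldots, \sigma_n)$, which indeed lies in $V$. To see this is atypical, we exhibit a special subvariety $T \subset X$ containing $\hat\sigma$. Let $a_1, \ldots, a_n \in \Z$, not all zero, witness $\prod \sigma_i^{a_i} = 1$; take $T = Y(1)^n \times T_0$ where $T_0 \subset \G^n$ is the special subvariety cut out by $\prod t_i^{a_i} = 1$ (after dividing by the gcd of the $a_i$ to make the lattice primitive — note the $\sigma_i$ are not roots of unity, being nontrivial images of singular moduli, so this is a genuine codimension-one condition on $\G^n$ and $T_0$ has codimension $1$). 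Then $\hat\sigma \in V \cap T$, $\dim V = n$, $\dim T = 2n - 1$, $\dim X = 2n$, so the expected dimension of $V \cap T$ is $n - 1$, but $\hat\sigma$ is a $0$-dimensional component, and $0 > n - 1$ fails only when $n = 1$; for $n = 1$ one argues directly (or uses that a singular modulus is never a root of unity, so there are no such $1$-tuples and the statement is vacuous). For $n \geq 2$ we have $\dim\{\hat\sigma\} = 0 \geq n - 1$ is false, so I should instead choose $T$ more carefully — actually the correct special subvariety also fixes each $x_i$ to be the $j$-special point it is, giving $\dim T = n - 1$ and then $0 > n - 1 + n - 2n = -1$, which holds. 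So $\hat\sigma$ is atypical.

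The substance of the lemma is the second assertion: $\hat\sigma$ lies in no positive-dimensional atypical component of $V$. Suppose for contradiction that $A \subset V$ is an atypical component with $\hat\sigma \in A$ and $\dim A \geq 1$, witnessed by a special $T' \subset X$ with $A \subset V \cap T'$ and $\dim A > \dim V + \dim T' - \dim X$. I would pass to the uniformisation: let $\tilde A$ be a complex-analytic component of $\pi^{-1}(A)$ through a chosen preimage of $\hat\sigma$; this is a component with respect to $V$ in the sense of Section~\ref{sec:ax}. Enlarging $A$ to an optimal component containing it (which only decreases or preserves the defect, hence keeps it atypical in the relevant sense), Theorem~\ref{thm:AxSopt} tells us the corresponding component is geodesic, i.e. its Zariski closure in $U$ is weakly special. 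Thus $A$ is contained in a weakly special subvariety $W \times T'' \subset X$ with $W \subset Y(1)^n$ weakly special, $T'' \subset \G^n$ weakly special, and (after the atypicality bookkeeping) $W \times T''$ itself must be atypical for $V$, forcing a nontrivial relation: projecting to the $\G^n$-coordinates, the image of a positive-dimensional piece of $A$ satisfies a multiplicative relation $\prod t_i^{b_i} = c$ with not all $b_i$ zero, while the $t_i$-coordinates on $A$ equal $R(x_i)$ where the $x_i$ vary over the weakly special $W \subset Y(1)^n$. Lifting $W$ to $\h^n$, this yields a relation $\prod f(g_i z)^{b_i} = c$ among $\GL$-translates $f(g_i z)$ of $f$ coming from the defining equations $x_i = g_{ik} x_{j}$ of $W$.

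The key point is then to contradict Condition~\ref{conj:ind}. I need that the relevant $f(g_i z)$ are pairwise distinct: this is exactly where the minimality and distinctness hypotheses enter. Because $\hat\sigma \in A$ and $\{\sigma_1, \ldots, \sigma_n\}$ is \emph{minimally} multiplicatively dependent, every $b_i$ in the induced relation must be nonzero (a vanishing $b_i$ would, by specialising the other coordinates appropriately, produce a multiplicative dependence among a proper subset); and the $\sigma_i$ being pairwise distinct forces, at least at the point $\hat\sigma$, that the functions $f(g_i z)$ are not all identically equal — more carefully, after grouping identical translates together one gets a shorter relation, contradicting minimality at $\hat\sigma$ unless all the translates appearing are genuinely distinct. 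Once we have pairwise distinct translates $f(g_1 z), \ldots, f(g_m z)$ satisfying $\prod f(g_i z)^{b_i} = c$ with all $b_i \neq 0$, this directly violates Condition~\ref{conj:ind}, completing the contradiction. The main obstacle is the careful bookkeeping in this last step: correctly translating the geodesic/optimal structure of $A$ into a functional multiplicative relation among $\GL$-translates of $f$, and verifying that minimality of the dependence among the $\sigma_i$ — combined with the fact that $A$ passes through $\hat\sigma$ — forces both the nonvanishing of all exponents and the pairwise distinctness of the translates involved, so that Condition~\ref{conj:ind} genuinely applies. I expect the bulk of the work to be in setting up the correspondence between weakly special subvarieties $W$ of $Y(1)^n$ through $(x_1, \ldots, x_n)$ and systems of $\GL$-translates of $f$, which is largely parallel to the final paragraph of the proof of Proposition~\ref{prop:conjtup}.
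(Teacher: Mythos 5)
Your first half eventually lands on the paper's witness (fix all $Y(1)$-coordinates and impose the single multiplicative relation, giving a special $T$ of codimension $n+1$), so the atypicality of $\hat\sigma$ is fine, modulo one factual slip: $f$-special points \emph{can} be roots of unity (Proposition~\ref{prop:moddep1} proves finiteness, not non-existence), so your aside that the $\sigma_i$ are never roots of unity, and your claim that the $n=1$ case is vacuous, are both wrong (for $n\geq 2$ minimality does rule out root-of-unity coordinates; for $n=1$ the paper's $T$ still works directly). The serious problem is in the second half, at the pivotal step. The paper needs no Ax--Schanuel here at all: the atypicality witness $T'$ is by definition already a \emph{special} subvariety $M\times T$, and the whole content is a case analysis using the fact that $V$ is the graph of $R^{\times n}$ over $Y(1)^n$ -- subvarieties $M\times\G^n$ and $Y(1)^n\times T$ never meet $V$ atypically, two independent multiplicative conditions through $\hat\sigma$ let one eliminate a coordinate and contradict minimality (here it is essential that the constants are roots of unity), and in the remaining case ($M\times T$ with a single multiplicative relation) atypicality forces $\dim A=\dim M$, hence $R(M)\subset T$, i.e.\ the relation holds \emph{identically} on $M$. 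This dichotomy is exactly what you wave away as ``atypicality bookkeeping'': nothing in your sketch explains why a positive-dimensional atypical component could not be witnessed purely by modular conditions, nor why the multiplicative relation you extract has a nonzero exponent vector on the non-constant coordinates, and these are the points where the graph structure of $V$ and the minimality hypothesis actually do their work.

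Moreover, the detour through optimal/geodesic components and Theorem~\ref{thm:AxSopt} does not just add unneeded machinery, it damages the argument: the envelope it produces is only \emph{weakly} special, so the constant $c$ in $\prod t_i^{b_i}=c$ is an arbitrary complex number, and evaluating at $\hat\sigma$ then gives no multiplicative dependence among a proper subset of the $\sigma_i$ when some $b_i=0$ (or when all coordinates carrying nonzero exponents happen to be constant along the component) -- so the minimality step you invoke cannot be run in that setting; one has to return to the special witness $T'$, where the constant is a root of unity, which is what the paper does from the start. Finally, your distinctness step is slightly misattributed: identical translates are excluded not by ``a shorter relation contradicting minimality'' but by the fact that $f(g_iz)\equiv f(g_kz)$ forces $j(g_iz)\equiv j(g_kz)$ (no non-constant modular function is invariant under a group larger than $\Q^{\times}\cdot\SL$), whence $x_i=x_k$ and $\sigma_i=\sigma_k$ at $\hat\sigma$, contradicting the \emph{distinctness} of the $\sigma_i$; once the translates are pairwise distinct and the exponent vector nontrivial, Condition~\ref{conj:ind} finishes, as you say. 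So the right ingredients are named, but the central reduction is missing and the proposed route to it would not work as stated.
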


\begin{proof}
	The proof is similar to that of \cite[Lemma~6.1]{PilaTsimerman17}. Suppose $\sigma = (\sigma_1, \ldots, \sigma_n)$ is an $n$-tuple of distinct $f$-special points satisfying a non-trivial multiplicative relation and minimal for this property. We may write $\sigma_i = R(x_i)$ for $x_i$ a $j$-special point. 
	
	The point $\hat{\sigma} = (x_1, \ldots, x_n, \sigma_1, \ldots, \sigma_n) \in V$ lies in the intersection of $V$ with a special subvariety $T$ of $X$ of codimension $n+1$. Here $T$ is given by fixing all the $Y(1)$ coordinates and imposing the multiplicative relation satisfied by the $\sigma_i$ on the $\G$ coordinates. Hence, $\hat{\sigma}$ is an atypical point of $V$. We now show that $\hat{\sigma}$ cannot be contained in an atypical component of $V$ of positive dimension.
	
	If $\hat{\sigma}$ were contained in a special subvariety of $X$ defined by two independent multiplicative conditions on the $\sigma_i$ coordinates, then we could eliminate one of these coordinates, contradicting the fact that the set $\{\sigma_1, \ldots, \sigma_n\}$ is minimally multiplicatively dependent.
	
	A special subvariety $M \times \G^n$, where $M$ is a special subvariety of $Y(1)^n$, never intersects $V$ atypically. Similarly, no special subvariety of the form $Y(1)^n \times T$, where $T$ is a special subvariety of $\G^n$, intersects $V$ atypically.
	
	It thus remains to consider special subvarieties of the form $M \times T$, where $M$ is a proper special subvariety of $Y(1)^n$ and $T$ is a special subvariety of $\G^n$ defined by one multiplicative condition. Then $V \cap (M \times T)$ is equal to the set
	\[ \{ (u_1, \ldots, u_n, R(u_1), \ldots, R(u_n)) : (u_1, \ldots, u_n) \in M, (R(u_1), \ldots, R(u_n)) \in T\}.\]
	This would typically have dimension $\dim M -1$. To be atypical, we would thus require that $R(M) \cap \G^n \subset T$. Hence, if $V \cap (M \times T)$ were also positive-dimensional, then Condition~\ref{conj:ind} and the fact that no proper subset of $\{\sigma_1, \ldots, \sigma_n\}$ is multiplicatively dependent implies that $R(M)$ must have two identically equal coordinates. Since no non-constant modular function is invariant under a larger subgroup of $\GL$ than $\Q^{\times} \cdot \SL$, one must then have that $M$ has two identically equal coordinates. But then the  $\sigma_i$ cannot be pairwise distinct, a contradiction.
\end{proof}

As can be seen from the above proof, a multiplicatively dependent $n$-tuple of $f$-special points always gives rise to an atypical component of $V$. However, to show that the resulting atypical component is maximal, one requires also that the points are distinct, the multiplicative dependence is minimal, and $f$ satisfies Condition~\ref{conj:ind}. In such a case, one can then rule out the possibility of there being any positive-dimensional atypical component containing such tuples. It is because we can exclude the positive-dimensional case that our Theorem~\ref{thm:moddep} (and also \cite[Theorem~1.2]{PilaTsimerman17}) is a stronger statement than \cite[Theorem~1.1]{PilaTsimerman19}, where one cannot rule out this case.

With Lemma~\ref{lem:atyp}, it is now easy to prove the following.

\begin{prop}\label{prop:ZPimplies}
	Let $f \in \C(j)$ be non-constant. Suppose that Condition~\ref{conj:ind} and Conjecture~\ref{conj:ZP} hold for $f$. Then, for each $n \geq 1$, there are only finitely many $n$-tuples $(\sigma_1, \ldots, \sigma_n)$ of distinct $f$-special points such that the set $\{\sigma_1, \ldots, \sigma_n\}$ is multiplicatively dependent, but no proper subset of $\{\sigma_1, \ldots, \sigma_n\}$ is multiplicatively dependent.
\end{prop}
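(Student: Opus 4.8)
The plan is to deduce this immediately from Lemma~\ref{lem:atyp} together with Conjecture~\ref{conj:ZP}. Fix $n \geq 1$, and let $\sigma = (\sigma_1, \ldots, \sigma_n)$ be an $n$-tuple of distinct $f$-special points with $\{\sigma_1, \ldots, \sigma_n\}$ multiplicatively dependent but with no proper subset multiplicatively dependent. Writing $\sigma_i = R(x_i)$ for suitable $j$-special points $x_i$, Lemma~\ref{lem:atyp} (which applies because we assume Condition~\ref{conj:ind}) yields an atypical point $\hat{\sigma} = (x_1, \ldots, x_n, \sigma_1, \ldots, \sigma_n) \in V$ that is contained in no atypical component of $V$ of positive dimension.

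First I would observe that, regarded as the $0$-dimensional subvariety $\{\hat{\sigma}\} \subset V$, the point $\hat{\sigma}$ is a \emph{maximal} atypical component of $V$: it is atypical by Lemma~\ref{lem:atyp}, and any atypical component of $V$ strictly containing it would necessarily be positive-dimensional, which Lemma~\ref{lem:atyp} forbids. By Conjecture~\ref{conj:ZP}, there are only finitely many maximal atypical components of $V$, so there are only finitely many points of $X$ that can occur as such a $\hat{\sigma}$.

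Next I would note that the assignment $\sigma \mapsto \hat{\sigma}$ is injective: the final $n$ coordinates of $\hat{\sigma}$ recover $\sigma$ exactly. (Conversely a single admissible $\sigma$ may give rise to several points $\hat{\sigma}$, one for each choice of $j$-special preimage $x_i$ of $\sigma_i$ under the finite-to-one map $R$, but this causes no difficulty.) Hence the finiteness of the set of admissible $\hat{\sigma}$ forces the finiteness of the set of admissible $\sigma$, which is exactly the assertion of the proposition.

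I do not expect any genuine obstacle here: the substantive content has already been packaged into Lemma~\ref{lem:atyp}, whose proof uses Condition~\ref{conj:ind} precisely to exclude positive-dimensional atypical components through such tuples. The only points needing a moment's care are the bookkeeping remark that $\sigma$ is determined by $\hat{\sigma}$, and the elementary observation that a $0$-dimensional atypical subvariety is automatically maximal among the atypical components containing it.
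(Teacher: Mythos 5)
Your proposal is correct and follows essentially the same route as the paper: apply Lemma~\ref{lem:atyp} (which uses Condition~\ref{conj:ind}) to see that each such tuple yields a maximal atypical component $\{\hat{\sigma}\}$ of $V$, note that distinct tuples give distinct points $\hat{\sigma}$, and conclude by the finiteness of maximal atypical components from Conjecture~\ref{conj:ZP}. The extra observations you flag (recovering $\sigma$ from the last $n$ coordinates, and a zero-dimensional atypical component being maximal once positive-dimensional ones are excluded) are exactly the implicit bookkeeping in the paper's proof.
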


\begin{proof}
Let $\sigma = (\sigma_1, \ldots, \sigma_n)$ be an $n$-tuple of distinct $f$-special points satisfying a non-trivial multiplicative relation and such that no proper subset of $\{\sigma_1, \ldots, \sigma_n\}$ is multiplicatively dependent. By Lemma~\ref{lem:atyp} (the proof of which requires only Condition~\ref{conj:ind}), the tuple $\sigma$ gives rise to a maximal atypical component $\{\hat{\sigma}\}$ of $V$. Further, distinct tuples $\sigma$ give rise to distinct points $\hat{\sigma}$. By Conjecture~\ref{conj:ZP}, there are only finitely many maximal atypical components of $V$, and hence there are only finitely many such tuples $\sigma$.
\end{proof}

Thus, for $f \in \C(j)$ non-constant and $n \geq 1$, the finiteness of $n$-tuples of pairwise distinct $f$-special points that are minimally multiplicatively dependent would follow from Condition~\ref{conj:ind} and Conjecture~\ref{conj:ZP} for $f$. Conjecture~\ref{conj:ZP} is not known in general though, and so we must prove the finiteness statement directly, as we did in Section~\ref{sec:tup}.

\subsection{The Zilber--Pink conjecture for $n \leq 2$}\label{subsec:ZP12}

We are not able to prove Conjecture~\ref{conj:ZP} in general. We can though establish some partial results for small $n$. The proofs require the results of Section~\ref{sec:tup}. First, we show that, for $f \in \C(j)$ non-constant, Conjecture~\ref{conj:ZP} holds for $n=1$ thanks to Proposition~\ref{prop:moddep1}. 

\begin{prop}\label{prop:ZP1}
	Let $f \in \C(j)$ be a non-constant modular function. Then Conjecture~\ref{conj:ZP} holds for $n=1$.
\end{prop}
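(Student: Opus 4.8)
The plan is to set up the Zilber--Pink conjecture for $n=1$ and show it reduces directly to Proposition~\ref{prop:moddep1}. Here $X = X_{1,1} = Y(1) \times \G$, which has dimension $2$, and $V = \{(x,t) \in X : t = R(x)\}$ is the graph of $R$, so $\dim V = 1$. I would first enumerate the special subvarieties $T \subset X$ and determine which intersections $V \cap T$ can be atypical.

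First I would observe that, since $\dim V = 1$ and $\dim X = 2$, an atypical component $A \subset V \cap T$ must satisfy $\dim A > \dim V + \dim T - \dim X = \dim T - 1$. As $\dim A \le \dim V = 1$, the only possibilities are $\dim A = 1$ with $\dim T \le 1$, or $\dim A = 0$ with $\dim T = 0$. The case $\dim A = 1$ would force $A = V$ (an irreducible curve contains a $1$-dimensional subvariety only if they coincide), so $V$ itself would have to be contained in a special $T$ with $\dim T \le 1$; the special subvarieties of $X$ of dimension $\le 1$ are points, $\{x_0\} \times \G$ for $x_0$ a $j$-special point, and $Y(1) \times \{\zeta\}$ for $\zeta$ a root of unity. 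None of these contains the graph $V$ of a non-constant rational function (the first two are vertical, the third horizontal, and $V$ is neither), so no $1$-dimensional atypical component exists. Hence every atypical component is a point $\hat\sigma = (x_0, \zeta) \in V$ lying in the $0$-dimensional special subvariety $T = \{(x_0, \zeta)\}$, where $x_0$ is a $j$-special point and $\zeta$ is a root of unity. Since $\hat\sigma \in V$ means $\zeta = R(x_0)$, such a point corresponds precisely to an $f$-special point $\sigma = R(x_0)$ that is also a root of unity.

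Then I would invoke Proposition~\ref{prop:moddep1}, which gives that there are only finitely many $f$-special points that are roots of unity, hence only finitely many such points $\hat\sigma$; a fortiori there are only finitely many maximal atypical components of $V$, which is Conjecture~\ref{conj:ZP} for $n=1$.

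I do not expect a serious obstacle here: the main content is already in Proposition~\ref{prop:moddep1}, and the rest is the routine dimension bookkeeping described above together with the classification of low-dimensional special subvarieties of $Y(1) \times \G$ from Definition~\ref{def:special}. The only point requiring slight care is ruling out a positive-dimensional atypical component, i.e.\ checking that $V$ is not itself contained in a proper special subvariety; this follows because $R$ is non-constant, so $V$ projects dominantly to both factors $Y(1)$ and $\G$, whereas every proper special subvariety of $Y(1) \times \G$ fails to project dominantly to at least one factor.
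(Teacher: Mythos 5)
Your proof is correct and follows essentially the same route as the paper: enumerate the special subvarieties of $Y(1)\times\G$ (which are $X$, $Y(1)\times\{\zeta\}$, $\{x\}\times\G$, and $\{x\}\times\{\zeta\}$), use the non-constancy of $R$ to rule out positive-dimensional atypical components, and reduce the remaining point-like atypical components to the finiteness of $f$-special roots of unity given by Proposition~\ref{prop:moddep1}. The only difference is organizational (you case-split on $\dim A$ rather than on the form of $T$), which does not change the substance.
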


\begin{proof}
	Let $n =1$, so that $X = Y(1) \times \G$ and $V = \{(x,t) \colon t=R(x)\}$, where $R \in \C(t)$ such that $f(z) = R(j(z))$. The special subvarieties of $X$ are either $X$ itself, or have one of the following forms:
	\[Y(1) \times \{\zeta \}, \quad \{x\} \times \G, \quad \{x\} \times \{\zeta \},\]
	where $x$ is a $j$-special point and $\zeta$ is a root of unity. Now $V$ does not intersect $X$ atypically. Nor can $V$ intersect atypically with varieties of form either $Y(1) \times \{\zeta \}$ (since $R$ is non-constant) or $\{x\} \times \G$. So the only atypical components of $V$ arise when, for $x$ a $j$-special point and $\zeta$ a root of unity, the intersection 
	\[V \cap (\{x\} \times \{\zeta \})\]
	is non-empty. This happens just when $\zeta = R(x)$, in which case $\zeta$ is both $f$-special and a root of unity. So it is enough to establish the finiteness of points that are both $f$-special and a root of unity. This though is just Proposition~\ref{prop:moddep1}.
\end{proof}

Now we come to the $n = 2$ case. So, for the remainder of this section, $X = X_{2, 2}$ and $V = V_2 \subset X$. We make the following definitions for convenience.

\begin{definition}\label{def:fin}
	Let $f \in \C(j)$ be non-constant. We say that $f$ satisfies the finiteness condition for pairs if there are only finitely many $2$-tuples $(\sigma_1,  \sigma_2)$ of distinct $f$-special points $\sigma_i$ such that the set $\{\sigma_1, \sigma_2\}$ is multiplicatively dependent and minimal for this property.
\end{definition}

In the remainder of this article, we say that $(x_1, x_2) \in Y(1)^2$ satisfies a modular relation if $\Phi_N(x_1, x_2)=0$ for some $N \geq 1$. Here $\Phi_N$ denotes the $N$th classical modular polynomial. We note (see Remark~\ref{rmk:wkspec}) that $(x_1, x_2)$ satisfies a modular relation if and only if $(x_1, x_2)=(j(z), j(gz))$ for some $z \in \h$, $g \in \GL$. 

\begin{definition}\label{def:modtors}
	Let $f \in \C(j)$ be non-constant. A modular--torsion tuple (for $f$) is a tuple $(x_1, x_2, \zeta_1, \zeta_2) \in V$ such that $(x_1, x_2)$ satisfies a modular relation, $x_1 \neq x_2$, neither $x_1$ nor $x_2$ is $j$-special, and $\zeta_1, \zeta_2$ are both roots of unity.
\end{definition}

We now show that Conjecture~\ref{conj:ZP} holds for $n=2$ if $f \in \C(j)$ is non-constant, satisfies both Condition~\ref{conj:ind} and the finiteness condition for pairs, and is also such that there are only finitely many modular--torsion tuples.

\begin{prop}\label{prop:ZP2}
	Let $f \in \C(j)$ be a non-constant modular function satisfying both Condition~\ref{conj:ind} and the finiteness condition for pairs. Suppose further that there are only finitely many modular--torsion tuples. Then Conjecture~\ref{conj:ZP} holds for $n=2$.
\end{prop}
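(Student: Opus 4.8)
The plan is to classify all maximal atypical components of $V = V_2$ in $X = X_{2,2} = Y(1)^2 \times \G^2$ by hand, in the spirit of the proof of Proposition~\ref{prop:ZP1}, now using all three hypotheses (Condition~\ref{conj:ind}, the finiteness condition for pairs, and the finiteness of modular--torsion tuples), together with Proposition~\ref{prop:moddep1} and André's theorem on CM points on curves in $Y(1)^2$. Here $\dim V = 2$ and $\dim X = 4$. First I would observe that $V$ is contained in no proper special subvariety of $X$: the projection of $V$ to $Y(1)^2$ is dominant, so the $Y(1)^2$-factor of any special subvariety containing $V$ must be all of $Y(1)^2$, while, $R$ being non-constant, the image of $V$ in $\G^2$ is Zariski dense and so lies in no proper algebraic subvariety of $\G^2$. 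Hence $V$ is not an atypical component of itself, and every maximal atypical component of $V$ has dimension $0$ or $1$.

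Next I would determine the one-dimensional atypical components. If $A \subset V \cap T$ is atypical of dimension $1$ with $T = M \times S$ special, the atypicality inequality forces $\dim T \le 2$, and the cases $M = Y(1)^2$ or $M$ a point give only finite intersections with $V$; so $\dim T = 2$ with $\dim M = \dim S = 1$. Running through these, a curve in $V \cap T$ can occur only when either (i) $M = \{x_0\} \times Y(1)$ (or its transpose) with $x_0$ a $j$-special point and $S = \{t_1 = \zeta\}$ for a root of unity $\zeta$, which forces $R(x_0) = \zeta$ to be a root of unity, so by Proposition~\ref{prop:moddep1} only finitely many $x_0$ are admissible and hence only finitely many such curves arise; or (ii) $M = \{\Phi_N(x_1, x_2) = 0\}$ for some $N$ and $S$ a one-dimensional special curve, in which case, writing $(x_1, x_2) = (j(z), j(gz))$ along $M$, the defining relation of $S$ becomes $f(z)^a f(gz)^b = \zeta$ for integers $a, b$ and a root of unity $\zeta$; since $f$ is non-constant, Condition~\ref{conj:ind} forces $f(z) \equiv f(gz)$, whence $a = -b = \pm 1$ and $\zeta = 1$, and then the fact that a non-constant modular function is invariant under no subgroup of $\GL$ larger than $\Q^{\times} \cdot \SL$ (used already in the proof of Lemma~\ref{lem:atyp}) gives $N = 1$, so $M$ is the diagonal and this contributes only the single component $D = \{(x, x, R(x), R(x)) : x \in Y(1)\}$. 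Thus there are only finitely many one-dimensional atypical components, and each is maximal since $V$ itself is not atypical.

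It then remains to bound the maximal atypical \emph{points}, that is, atypical points $\hat\sigma = (x_1, x_2, R(x_1), R(x_2)) \in V$ not lying on one of the curves above. Atypicality forces $\hat\sigma \in V \cap T$ for a special $T = M \times S$ with $\dim T \le 1$, and I would run through the possibilities. If $M$ and $S$ are both points then $x_1, x_2$ are $j$-special and $R(x_1), R(x_2)$ are roots of unity, so Proposition~\ref{prop:moddep1} leaves finitely many choices. If $M$ is one-dimensional and $S$ a point: when $M$ is horizontal or vertical, one of $R(x_1), R(x_2)$ is a root of unity and $\hat\sigma$ lies on a curve of type (i), so is not maximal; when $M = \{\Phi_N = 0\}$, the pair $(x_1, x_2)$ satisfies a modular relation with $R(x_1), R(x_2)$ roots of unity, and one splits into $x_1 = x_2$ (then $\hat\sigma \in D$), exactly one of $x_1, x_2$ being $j$-special (then both are, being isogenous, and are among the finitely many with root-of-unity image), or neither being $j$-special --- which is precisely a modular--torsion tuple, of which there are finitely many by hypothesis. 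Finally, if $M$ is a point and $S$ one-dimensional, then $x_1, x_2$ are $j$-special and $R(x_1)^a R(x_2)^b = \zeta$ for a root of unity $\zeta$: if $R(x_1) \neq R(x_2)$ then $\{R(x_1), R(x_2)\}$ is multiplicatively dependent, and is either minimally so (finitely many by the finiteness condition for pairs) or else $R(x_1)$ or $R(x_2)$ is a root of unity, putting $\hat\sigma$ on a curve of type (i); while if $R(x_1) = R(x_2) =: t$ with $t$ a root of unity then Proposition~\ref{prop:moddep1} applies, and if $R(x_1) = R(x_2)$ is not a root of unity we must have $x_1 \neq x_2$ (else $\hat\sigma \in D$), so $(x_1, x_2)$ is a pair of CM points on the curve $C = \{R(x_1) = R(x_2)\} \subset Y(1)^2$ lying off its diagonal component.

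The hard part will be this last sub-case, which is the only input beyond the three hypotheses and Proposition~\ref{prop:moddep1}: no component of $C$ other than the diagonal can be a special curve, since a horizontal or vertical component would make $R$ constant on a fibre, while a component $\{\Phi_N = 0\}$ with $N \ge 2$ would give $f \circ g = f$ for some $g \notin \Q^{\times} \cdot \SL$, again contradicting the invariance statement above; hence by André's theorem the curve $C$ contains only finitely many pairs of CM points off the diagonal, so only finitely many such $\hat\sigma$ arise. Collecting the cases, $V$ has only finitely many maximal atypical components, which is Conjecture~\ref{conj:ZP} for $n = 2$. (One could instead run the argument through the optimal subvarieties of Theorem~\ref{thm:AxSopt}, but the direct classification above appears cleanest here.)
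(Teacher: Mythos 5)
Your proposal is correct and follows essentially the same route as the paper's proof: an exhaustive case analysis over the special subvarieties $T = M \times S$ of $Y(1)^2 \times \G^2$, using Proposition~\ref{prop:moddep1} for $f$-special roots of unity, Condition~\ref{conj:ind} together with the fact that no non-constant modular function is invariant under a group larger than $\Q^{\times}\cdot\SL$ to isolate the diagonal component, the finiteness condition for pairs, Andr\'e's theorem for distinct CM points with equal $R$-values, and the modular--torsion hypothesis, each in the same role as in the paper. The only difference is organizational (you sort by the dimension of the atypical component, the paper by the codimension of $T$), which does not change the argument.
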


\begin{proof}
	Here $X = Y(1)^2 \times \G^2$ and $V = \{(x_1,x_2, t_1, t_2) \colon t_1=R(x_1), t_2 =R(x_2)\}$, so $\dim X = 4$ and $\dim V = 2$. We find all maximal atypical components of $V$ by considering the possible special subvarieties $T$ of $X$. We split into cases based on $\codim T$.
	
	\begin{enumerate}[wide, labelwidth=!, labelindent=0pt]
		
		\item The only special subvariety of $X$ of codimension $0$ is $X$ itself, and, once again, $V$ does not intersect $X$ atypically. \\[\parskip]
		
		\item So consider next intersections $V \cap T$, for $T \subset X$ a special subvariety of codimension $1$. So $T$ is defined by just one of: a single fixed $Y(1)$ coordinate; a single modular relation among the $Y(1)^2$ coordinates; a single fixed $\G$ coordinate; or a single multiplicative relation among the $\G^2$ coordinates.
		A component of such an intersection $V \cap T$ is atypical only if it has dimension 
		\[> \dim V + \dim T - \dim X =1\]
		Considering the possible $T$ in turn, we see that this is impossible.\\[\parskip]
		
		\item Now suppose $T \subset X$ is an special subvariety of codimension $2$. A component of the intersection $V \cap T$ is then atypical only if it is positive-dimensional. This clearly rules out all cases where $T$ is defined either by two independent conditions on the $Y(1)^2$ coordinates or by two independent conditions on the $\G^2$ coordinates. So we may assume $T$ is defined by one condition on the $Y(1)^2$ coordinates and one condition on the $\G^2$ coordinates. 
		
		If both conditions are fixed coordinates, then atypical components arise only when $T$ has form either $\{(x, t_1, \zeta, t_2) \colon t_1 \in Y(1), t_2 \in \G\}$ or $\{(t_1, x, t_2, \zeta) \colon t_1 \in Y(1), t_2 \in \G \}$, where $\zeta$ is a root of unity, $x$ is a $j$-special point, and $\zeta = R(x)$. Hence $\zeta$ is both a root of unity and an $f$-special point. There are therefore only finitely many such components by Proposition~\ref{prop:moddep1}. 
		
		If one of the conditions is a fixed coordinate and the other condition is a relation, then the intersection cannot be atypical. The final case to consider is where both the condition on the $Y(1)^2$ coordinates and the condition on the $\G^2$ coordinates are relations. Condition~\ref{conj:ind} then implies that the $\G^2$ coordinates must be equal. Since no non-constant modular function is invariant under a larger subgroup of $\GL$ than $\Q^{\times} \cdot \SL$, one must then have that $T = \{(t_1,t_1,t_2,t_2) \colon t_1 \in Y(1), t_2 \in \G \}$.\\[\parskip]
		
		\item Next let $T$ be a special subvariety of $X$ of codimension $3$. Then $T$ intersects $V$ atypically only if $V \cap T$ is non-empty. The special subvariety $T$ is defined either by two independent modular conditions and one multiplicative condition, or by one modular condition and two independent multiplicative conditions. We may assume that the two conditions of the same type are both fixed coordinates. 
		
		If all three conditions are fixed coordinates, then either both the first $Y(1)^2$ coordinate and the first $\G^2$ coordinate must be fixed, or both the second $Y(1)^2$ coordinate and the second $\G^2$ coordinate must be fixed. Let $x$ be the corresponding fixed $Y(1)^2$ coordinate and $\zeta$ be the corresponding fixed $\G^2$ coordinate. Then $V \cap T$ is non-empty only if $R(x) = \zeta$ is both an $f$-special point and a root of unity. But then atypical components of $V \cap T$ must already be contained in one of the finitely many positive-dimensional atypical components arising from special subvarieties of form either $\{(x, t_1, \zeta, t_2) \colon t_1 \in Y(1), t_2 \in \G\}$ or $\{(t_1, x, t_2, \zeta) \colon t_1 \in Y(1), t_2 \in \G \}$, where $\zeta = R(x)$ is both $f$-special and a root of unity. 
		
		Let $T$ be a special subvariety defined by two fixed $Y(1)^2$ coordinates, say $x_1, x_2$, and a multiplicative relation on the $\G^2$ coordinates. Then $T$ has a non-empty intersection with $V$ only if $R(x_1), R(x_2)$ satisfy this multiplicative relation. The points $R(x_1), R(x_2)$ are $f$-special points, so such $T$ correspond to pairs of $f$-special points that are multiplicatively dependent. The finiteness condition for pairs implies that there are only finitely many such $T$, provided the $f$-special points are distinct and neither is a root of unity. 
		
		If one of the $f$-special points is also a root of unity, then $V \cap T$ is already contained in one of the atypical components described above. If the $f$-special points are not distinct, then either $V \cap T$ is contained in the positive-dimensional atypical component identified above which arises from the special subvariety
		\[\{(t_1,t_1,t_2,t_2) \colon t_1 \in Y(1), t_2 \in \G \}\]
		or $x_1, x_2$ are distinct $j$-special points satisfying $R(x_1)=R(x_2)$. We show that there are only finitely many possibilities for $x_1, x_2$ in the second case.
		
		Suppose there are infinitely many pairs of $j$-special points $(x_1, x_2)$ such that $x_1 \neq x_2$, but $R(x_1)=R(x_2)$. Then the subvariety of $Y(1)^2$ given by 
		\[\{(x,y) \in Y(1)^2 : R(x)=R(y)\}\]
		contains infinitely many points $(x,y)$ with $x, y$ distinct $j$-special points. By Andr\'e's Theorem \cite{Andre98}, there must then be some $g \in \GL$ with the functions $j(z), j(gz)$ distinct and such that $R(j(z))=R(j(gz))$. This though contradicts the fact that no non-constant modular function is invariant under a larger subgroup of $\GL$ than $\Q^{\times} \cdot \SL$. There are thus only finitely many such pairs $(x_1, x_2)$ and hence only finitely many of the resulting atypical components.
		
		If $T$ is defined by one modular relation and two multiplicative conditions (which we may assume are both fixed coordinates), then either $V \cap T$ is contained in one of the finitely many above-described atypical components of positive dimension, or we have a maximal atypical component of the form $\{(x_1, x_2, \zeta_1, \zeta_2)\}$ where $(x_1, x_2)$ satisfies a modular relation, $x_1 \neq x_2$, neither $x_1$ nor $x_2$ is $j$-special, and $\zeta_1, \zeta_2$ are roots of unity. The finiteness of such components is then guaranteed by our assumption on modular--torsion tuples.
		\\[\parskip]
		
		\item Finally, if $T$ is a special subvariety of codimension $4$, then we may assume that all $4$ coordinates are fixed. So $T = \{(x_1, x_2, \zeta_1, \zeta_2)\}$, where $x_1, x_2$ are $j$-special points and $\zeta_1, \zeta_2$ are roots of unity. In this case, $V \cap T$ is non-empty only if $\zeta_1 = R(x_1)$ and $\zeta_2 = R(x_2)$, in which case $\zeta_1, \zeta_2$ are both $f$-special points and roots of unity. In this case $V \cap T$ is contained in one of the atypical components considered previously.
	\end{enumerate}
	
	Consequently, $V$ has only finitely many maximal atypical components, and so Conjecture~\ref{conj:ZP} holds.
\end{proof}

We have thus shown that, for $f \in \C(j)$ non-constant and $n=2$, Conjecture~\ref{conj:ZP} holds if $f$ satisfies both Condition~\ref{conj:ind} and the finiteness condition for pairs and, in addition, there are only finitely many modular--torsion tuples. We are able to prove the finiteness of modular--torsion tuples for non-constant $f \in \alg(j)$ satisfying Condition~\ref{conj:ind}. To do this, we need first the following lemma.

\begin{lem}\label{lem:modpairsdeg}
	Let $f \in \alg(j)$ be a non-constant modular function. For every $\alpha > 1$, there exists a constant $c = c(f, \alpha) > 0$ such that, for each modular--torsion tuple $(x_1, x_2, \zeta_1, \zeta_2)$, one has that $\deg \zeta_1 \leq c (\deg \zeta_2)^\alpha$ and $\deg \zeta_2 \leq c (\deg \zeta_1)^\alpha$.
\end{lem}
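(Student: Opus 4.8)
The plan is to pass from the roots of unity $\zeta_i$ to the $j$-line coordinates $x_i$, then to exploit the modular relation to compare $\deg x_1$ and $\deg x_2$ up to a factor depending on the level $N$, and finally to bound $N$ itself.

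First I would reduce to a statement about the $x_i$. Write $\zeta_i=R(x_i)$, where $f(z)=R(j(z))$ with $R\in K(t)$ for some number field $K$. Since $\zeta_1,\zeta_2$ are roots of unity, Kronecker's theorem gives $h(\zeta_i)=0$, so by functoriality of heights under the rational map $R\colon\pr\to\pr$ — exactly as in the proof of Proposition~\ref{prop:moddep1}, via \cite[Theorem~B.2.5]{HindrySilverman00} — one gets $h(x_i)\le c_0$ for a constant $c_0=c_0(f)$. As for degrees: on the one hand $\zeta_i\in K(x_i)$, so $\deg\zeta_i\le[K:\Q]\deg x_i$; on the other hand $x_i$ is a root of the numerator of $R(t)-\zeta_i$ over $K(\zeta_i)$, a polynomial of degree at most $\deg R$, so $\deg x_i\le[K:\Q](\deg R)\deg\zeta_i$. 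Thus $\deg\zeta_i$ and $\deg x_i$ agree up to a factor depending only on $f$, and it suffices to prove that for every $\alpha>1$ there is $c=c(f,\alpha)$ with $\deg x_1\le c(\deg x_2)^{\alpha}$ and $\deg x_2\le c(\deg x_1)^{\alpha}$.

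Next, the modular relation. Since $(x_1,x_2)$ satisfies a modular relation there is some $N\ge 1$ with $\Phi_N(x_1,x_2)=0$, so $x_2$ is a root of $\Phi_N(x_1,T)$, a polynomial of degree $\psi(N):=N\prod_{p\mid N}(1+p^{-1})$ over $\Q(x_1)$ (note $x_1$ is not a root of its leading coefficient $\pm 1$). Hence $[\Q(x_1,x_2):\Q(x_1)]\le\psi(N)$, so $\deg x_2\le\psi(N)\deg x_1$, and symmetrically $\deg x_1\le\psi(N)\deg x_2$. As $\psi(N)\ll_{\epsilon}N^{1+\epsilon}$, the lemma will follow once one shows that, for every $\epsilon>0$, there is $c_\epsilon=c_\epsilon(f)$ with
\[ N\le c_\epsilon\,\min(\deg x_1,\deg x_2)^{\epsilon}. \]
Here is where the hypothesis that $x_1,x_2$ are not $j$-special enters: the elliptic curves $E_1,E_2$ with these $j$-invariants have no complex multiplication, and they are linked by a cyclic $N$-isogeny, say with $E_1\cong E_2/C$ for $C\subset E_2$ cyclic of order $N$. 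The group $\Gal(\overline{\Q}/\Q(x_2))$ permutes such subgroups through the mod-$N$ representation of $E_2$, and $j(E_2/\sigma C)=\sigma x_1$ for $\sigma$ in this Galois group, so the orbit of $C$ maps onto the $\Q(x_2)$-conjugates of $x_1$ with fibres of size bounded in terms of $f$. Since $E_2$ is non-CM with Faltings height $\falt(E_2)$ bounded in terms of $f$ (by the height bound above), a uniform open-image/isogeny estimate — polynomial in $[\Q(x_2):\Q]$ and $\falt(E_2)$ — bounds the index of the mod-$N$ image by a fixed power of $\deg x_2$, whence the orbit of $C$ has size at least $\psi(N)$ divided by that power; comparing with $\deg x_1$ gives $\psi(N)\le c(\deg x_2)^{A}\deg x_1$, and one runs the symmetric argument too.

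The hard part is precisely this last step. One must quote a genuinely uniform estimate (valid throughout the bounded-Faltings-height regime, rather than depending on the individual curve), and — to avoid a useless circular bound of the shape $N\ll\deg x_1\deg x_2$ — one has to arrange the argument so that $N$ is controlled by the \emph{smaller} of the two degrees alone, by working over a conjugate field over which the relevant Galois image is provably large. Once $N\le c_\epsilon\min(\deg x_1,\deg x_2)^{\epsilon}$ is secured, substituting into $\deg x_1\le\psi(N)\deg x_2\ll_\epsilon(\deg x_2)^{1+2\epsilon}$ and choosing $\epsilon$ with $1+2\epsilon\le\alpha$, together with the symmetric inequality, completes the proof.
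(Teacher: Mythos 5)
Your reductions are fine (bounded heights of the $x_i$ via Kronecker plus \cite[Theorem~B.2.5]{HindrySilverman00}, comparability of $\deg\zeta_i$ and $\deg x_i$ up to constants depending on $f$, and $\deg x_2\le\psi(N)\deg x_1$ from $\Phi_N$), but the proof has a genuine gap exactly where you say the hard part is: the bound $N\le c_\epsilon\min(\deg x_1,\deg x_2)^{\epsilon}$ is never proved, and the mechanism you sketch cannot deliver it. The Galois-orbit argument gives: (orbit of $C$ under $\Gal(\overline{\Q}/\Q(x_2))$) has size at least $\psi(N)$ divided by the index of the mod-$N$ image, and at most $[\Q(x_1,x_2):\Q(x_2)]\le\deg x_1$. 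The only unconditional uniform input available (Masser--W\"ustholz type open-image/isogeny estimates) bounds that index polynomially in the degree of the field of definition, i.e.\ by $c(\deg x_2)^{\gamma}$ with $\gamma$ a large absolute constant. This yields only $\psi(N)\le c\,\deg x_1(\deg x_2)^{\gamma}$ and its mirror image, and this system of inequalities, together with $\deg x_2\le\psi(N)\deg x_1$, is consistent with $\deg x_2$ arbitrarily large relative to $\deg x_1$ (take $\deg x_2\approx\psi(N)\deg x_1$ with $N$ huge); so no conclusion follows. Even in the ideal, unprovable case of full mod-$N$ image over some conjugate field you would only get $\psi(N)\le\deg x_1$, hence $\deg x_2\le(\deg x_1)^2$, which gives the lemma for $\alpha\ge 2$ but not for every $\alpha>1$ as required. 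So the "work over a conjugate field where the image is provably large" step is not just unwritten; there is no known statement of the needed strength, and the circularity you flag is fatal to this route.

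The paper avoids Galois images entirely and argues by o-minimal counting. Assuming $d_2>Md_1^{\alpha}$, it uses the height bounds, the Faltings height bound, the isogeny estimate \cite[(5.10)]{PilaTsimerman17} and \cite[Lemma~5.2]{HabeggerPila12} to attach to the tuple a rational point $(g,q)$ of height $\ll d_2^{100}$ on a definable fibre $Y_{\nu}$ of a family indexed by preimages $\nu$ of the (at most $n$) possible $x_1$-coordinates; conjugating over $K(\zeta_1)$ (crucially, over the field of the root of unity, not over $\Q(x_2)$) produces $\gg d_2/d_1\ge M^{1/\alpha}d_2^{(\alpha-1)/\alpha}$ such rational points spread over at most $n$ fibres. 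Pila--Wilkie, uniform in the family and applied with $\epsilon=(\alpha-1)/(100\alpha)$, then forces a positive-dimensional connected semialgebraic subset of some $Y_{\nu_r}$, and the Ax--Schanuel results of Section~\ref{sec:ax} show this would make $V_1$ contain a positive-dimensional weakly special subvariety, which is impossible since $R$ is non-constant. If you want to complete your approach you would have to replace your missing step by an input of this counting type; the arithmetic estimates you list are exactly the ones the paper feeds into that machine, but they do not suffice on their own.
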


\begin{proof}
	Write $f(z) = R(j(z))$ for some rational function $R$ with algebraic coefficients. Let $K$ be a number field over which $R$ is defined. Since $R$ is a rational function, there is some integer $n$ such that $R$ is at most $n$-to-$1$. In this proof, all constants are positive and depend on $f, R, K, n$; any other dependencies will be explicitly indicated.
	
	Define $e \colon \C \to \G$ by $e(z) = \exp(2 \pi i z)$. We let
	\[F_e = \{u \in \C : 0 \leq \re u <1 \}\] 
	be a fundamental domain for the map $e$. The map $e$, restricted to $F_e$, is definable. We let 
	\[ Y = \{ (\nu, g, u) \in F_j \times \GLR \times F_e : g \nu \in F_j, \, R(j(g \nu)) = e(u)\}.\]
	The set $Y$ is definable. We view $Y$ as a definable family of fibres
	\[ Y_{\nu} = \{ (g, u) \in \GLR \times F_e  : g \nu \in F_j, \, R(j(g \nu)) = e(u) \},\]
	where $\nu \in F_j$.
	
	In particular, since the Counting Theorem of \cite{PilaWilkie06} is uniform in definable families, for every $\epsilon > 0$, there exists a constant $c(\epsilon) > 0$ such that, for every $\nu \in F_j$, either $N(Y_{\nu}, T) \leq c(\epsilon) T^{\epsilon}$ for all $T \geq 1$, or else $Y_{\nu}$ contains a positive-dimensional, connected semialgebraic set. Here $N(Y_{\nu}, T)$ denotes the number of rational points in the set $Y_{\nu}$ which have height $\leq T$.
	
	Suppose then that the lemma is false. Fix some $\alpha > 1$ for which it fails. Then, for every $M \geq 1$, there is some modular--torsion tuple $(x_1, x_2, \zeta_1, \zeta_2)$ with either $\deg \zeta_2 > M (\deg \zeta_1)^\alpha$ or $\deg \zeta_1 > M (\deg \zeta_2)^\alpha$. Fix some suitably large $M$ and such a modular--torsion tuple $(x_1, x_2, \zeta_1, \zeta_2)$. Write $d_i = \deg \zeta_i$. Without loss of generality, we may assume that $d_1 \leq d_2$, and so we must have that $d_2 > M d_1^{\alpha}$. Write $m_i$ for the order of $\zeta_i$. So $d_i = \phi(m_i)$. Let $\nu \in F_j$ be such that $j(\nu) = x_1$. We show that $(x_1, x_2, \zeta_1, \zeta_2)$ gives rise to a rational point of the set $Y_{\nu}$ in the following way.
	
	We have that $\zeta_2 = e(q)$, where $q = a/ m_2$ for some $a \in \Z$ with $0 < a < m_2$ and $\gcd(a, m_2)=1$. In particular, $H(q) = m_2$. Let $E_i$ be an elliptic curve with $j$-invariant $x_i$. The modular relation satisfied by $x_1, x_2$ implies that the elliptic curves $E_1$, $E_2$ are isogenous. We will bound the degree of this isogeny.
	
	First, we bound the degrees of $x_1, x_2$. Write
	\[R(t) = \frac{p(t)}{q(t)},\]
	where $p(t), q(t) \in K[t]$. Let $d = [K \colon \Q]$ and $l = \max\{\deg p, \deg q\}$. Note that $x_i$ is a root of the non-zero polynomial $f_i(t) = p(t)^{m_i} - q(t)^{m_i}$. The polynomial $f_i$ has degree $ \leq l m_i$ and coefficients in $K$. Hence
	\[[\Q(x_i) \colon \Q] \leq [K(x_i) \colon \Q] = [K(x_i) \colon K] [K \colon \Q] \leq l m_i d.\]
	So $x_i$ has degree bounded by $c_1 m_i$. 
	
	Next, we bound the logarithmic heights of $x_1, x_2$. Viewing the rational function $R$ as a morphism $\mathbb{P}^1 \to \mathbb{P}^1$, one may use \cite[Theorem~B.2.5]{HindrySilverman00} and the fact that $R(x_i) = \zeta_i$ is a root of unity to obtain that $h(x_1), h(x_2) \leq c_2$.
	
	We then use \cite[(5.8)]{PilaTsimerman17} to bound the semistable Faltings height $\falt(E_1)$ of the elliptic curve $E_1$. We obtain that
	\[\falt(E_1) \leq c_3 \max \{1, h(x_1)\}.\]
	In particular, the above bound on $h(x_1)$ thus implies that $\falt(E_1) \leq c_4$. Combining this bound on the Faltings height with the above degree bounds on $x_1, x_2$, we may use \cite[(5.10)]{PilaTsimerman17} to deduce that there is an isogeny between $E_1$ and $E_2$ of degree $N \leq c_5 \max \{m_1, m_2\}^5$. Consequently, by \cite[Lemma~5.2]{HabeggerPila12}, there exists $g \in \GL$ such that $g \nu \in F_j$, $j(g \nu) = x_2$, and the height of $g$ (regarded as the vector of its entries) is bounded by $c_6 \max\{m_1, m_2\}^{50}$. 
	
	The modular--torsion tuple $(x_1, x_2, \zeta_1, \zeta_2)$ thus gives rise to the rational point $(g, q) \in Y_{\nu}$, and this point has height $\leq c_7 \max\{m_1, m_2\}^{50}$. Since $\phi(m_i) = d_i$, we may use the elementary lower bound $\phi(m_i) \geq \sqrt{m_i/2}$ to obtain that the height of this rational point is $\leq c_8 d_2^{100}$.
	
	Now consider the conjugates of $(x_1, x_2, \zeta_1, \zeta_2)$ over the field $K(\zeta_1)$. Observe that
	\begin{align*} 
		[K(\zeta_1, \zeta_2) : K(\zeta_1)][K(\zeta_1) : \Q] = [K(\zeta_1, \zeta_2) : \Q] \geq [\Q(\zeta_2) : \Q] = d_2,
	\end{align*}
	so
	\begin{align*}
		[K(\zeta_1, \zeta_2) : K(\zeta_1)] &\geq \frac{d_2}{[K(\zeta_1) : \Q]}\\
		&= \frac{d_2}{[K(\zeta_1) : \Q(\zeta_1)][\Q(\zeta_1) : \Q]}\\
		&\geq \frac{d_2}{[K : \Q] d_1}.
	\end{align*}
	Recall that $d_2 > M d_1^\alpha$. Hence, there are at least
	\[[K(\zeta_1, \zeta_2) : K(\zeta_1)] \geq c_9 M^{1/\alpha} d_2^{(\alpha - 1) / \alpha} \]
	conjugates of $(x_1, x_2, \zeta_1, \zeta_2)$ over the field $K(\zeta_1)$.
	
	We may enumerate the conjugates as $(x_1^{(i)}, x_2^{(i)}, \zeta_1, \zeta_2^{(i)})$. Note that $R(x_1^{(i)}) = \zeta_1$ for all of these conjugates. Hence, there are at most $n$ distinct coordinates $x_1^{(i)}$ among these conjugates since $R$ is at most $n$-to-$1$. Let $\nu_1, \ldots, \nu_k \in F_j$ be such that $j(\nu_1), \ldots, j(\nu_k)$ are all distinct and give all the possible $x_1^{(i)}$ coordinates. Note that $k \leq n$.
	
	Now each distinct conjugate $(x_1^{(i)}, x_2^{(i)}, \zeta_1, \zeta_2^{(i)})$ gives rise, in the same way as above, to a distinct rational point which lies on one of the definable sets $Y_{\nu_1}, \ldots, Y_{\nu_k}$. Further, these rational points all have height bounded by $c_8 d_2^{100}$ since every $\zeta_2^{(i)}$ is again a root of unity of order $m_2$. Thus, there must be at least one $r \in \{1,\ldots, k\}$ such that $Y_{\nu_r}$ has at least $c_{10} M^{1/\alpha} d_2^{(\alpha - 1) / \alpha}$ rational points of height $\leq c_8 d_2^{100}$.
	
	We now apply the above-stated Counting Theorem with $\epsilon = (\alpha -1)/(100 \alpha)$ and $T = c_8 d_2^{100}$. Provided $M$ is suitably large (which we may always assume), there must then exist some $r \in \{1, \ldots, k\}$ such that $Y_{\nu_r}$ contains a positive-dimensional, connected, semialgebraic set. Fix such an $r$ and let $S$ be the corresponding semialgebraic set. 
	
	Define the map $\Theta \colon \GLR \times \C \to \h \times \C$ by $(g, u) \mapsto (g \nu_r, u)$. In particular, the map $\Theta$ is semialgebraic. The set $\Theta(S) \subset \h \times \C$ is positive-dimensional and semialgebraic. Define the map $\pi_e \colon \h \times \C \to Y(1) \times \G = X_1$ by $\pi_e = (j, e)$. Observe that $\Theta(S) \subset \pi_e^{-1}(V_1)$ since $S \subset Y_{\nu_r}$. Arguing as in the proof of Proposition~\ref{prop:conjtup}, we may find a complex algebraic subvariety $W \subset \C^2$ and a positive-dimensional complex-analytically irreducible component $A \subset (\h \times \C) \cap W$ such that $A \subset \pi_e^{-1}(V_1)$. Ax--Schanuel for the map $\pi_e$ (which follows from the Ax--Schanuel results in Section~\ref{sec:ax}) then implies that $V_1$ must contain a positive-dimensional weakly special subvariety. The only weakly special subvarieties of $V_1$ are points though, because $V_1 = \{ (x, t) \in X_1 : R(x) = t\}$ and the map $R$ is non-constant. We thus obtain a contradiction, and the result is proved.
\end{proof}

\begin{prop}\label{prop:modpairs}
	Suppose that $f \in \alg(j)$ is a non-constant modular function satisfying Condition~\ref{conj:ind}. Then there are only finitely many modular--torsion tuples.
\end{prop}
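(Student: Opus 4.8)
The plan is to argue by contradiction and run a point-counting argument of the same shape as the proof of Lemma~\ref{lem:modpairsdeg}, but now in the mixed Shimura variety $X_2 = Y(1)^2 \times \G^2$, with Condition~\ref{conj:ind} (rather than the trivial absence of positive-dimensional weakly special subvarieties of $V_1$) supplying the final contradiction. So suppose there are infinitely many modular--torsion tuples. Writing $f(z) = R(j(z))$ with $R \in K(t)$ for a number field $K$ and $R$ at most $n$-to-$1$, for a modular--torsion tuple $(x_1, x_2, \zeta_1, \zeta_2)$ with $\zeta_i$ of order $m_i$ and $d_i = \deg \zeta_i = \phi(m_i)$ the arithmetic estimates of the proof of Lemma~\ref{lem:modpairsdeg} apply verbatim: each $x_i$ is a root of $p(t)^{m_i} - q(t)^{m_i} \in K[t]$ so $\deg x_i \leq c\,m_i$; since $R(x_i)$ is a root of unity, $h(x_i) \leq c$ by height functoriality for $R \colon \pr \to \pr$, whence the elliptic curves $E_i$ with $j$-invariant $x_i$ have bounded Faltings height; as $E_1$ and $E_2$ are isogenous (the modular relation) there is an isogeny of degree $N \leq c\max\{m_1,m_2\}^5$, so by \cite[Lemma~5.2]{HabeggerPila12} there is $g \in \GL$ of height $\leq c\max\{m_1,m_2\}^{50}$ carrying the preimage in $F_j$ of $x_1$ to that of $x_2$. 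Using $\phi(m) \geq \sqrt{m/2}$, all of these quantities are bounded by $c\,d^{100}$ with $d := \max\{d_1,d_2\}$. Moreover, by Lemma~\ref{lem:modpairsdeg} (which forces $d_1$ and $d_2$ to be polynomially comparable), if the complexities $m_i$ were bounded there would be only finitely many modular--torsion tuples; so $d \to \infty$ along modular--torsion tuples.

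Next one passes to Galois conjugates. Conjugates over $K$ of a modular--torsion tuple are again modular--torsion tuples of the same complexity: the modular relation $\Phi_N$ is defined over $\Q$, the orders $m_i$ are preserved, the property of not being $j$-special is Galois-stable, and $\zeta_i = R(x_i)$ persists since $K$ contains the coefficients of $R$; and there are $\geq c\,d$ of them, since $[K(x_1,x_2):K] \geq [K(\zeta_1,\zeta_2):K] \geq c\max\{d_1,d_2\}$. By the isogeny bound above, each such conjugate gives a rational point of height $\leq c\,d^{100}$ on a definable set of the same shape as in the proof of Lemma~\ref{lem:modpairsdeg}, now recording both torsion coordinates --- either on one of finitely many fibres $Y_{\nu_1},\dots,Y_{\nu_k}$ ($k \leq n$, with $\nu_i$ the preimages in $F_j$ of the at most $n$ possible values of the fixed $j$-coordinate) of the definable family
\[ Y_\nu = \{(g,u_1,u_2) \in \GLR \times F_e \times F_e : g\nu \in F_j,\ R(j(\nu)) = e(u_1),\ R(j(g\nu)) = e(u_2)\}, \]
or, when the degrees $d_1$, $d_2$ are too close for a single $j$-coordinate to be pinned down, on the definable set obtained from $Y_\nu$ by existentially quantifying $\nu$ over $F_j$. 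Since $d \to \infty$, for large $d$ this count ($\geq c\,d$ rational points of height $\leq c\,d^{100}$, up to the finitely-many-to-one loss in passing from tuples to points) beats the Pila--Wilkie bound $c(\epsilon)T^{\epsilon}$ with $\epsilon$ small, so by the Counting Theorem \cite{PilaWilkie06} (uniform in families) the relevant definable set contains a positive-dimensional connected semialgebraic set; by the refined form \cite[Corollary~7.2]{HabeggerPila16} one gets a semialgebraic curve non-constant in the two $F_e$-coordinates.

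One then transports this curve, via the semialgebraic map $(\,\cdot\,, g, u_1, u_2) \mapsto (z, gz, 2\pi i u_1, 2\pi i u_2)$ (with $z$ the fixed preimage, respectively a continuous definable choice of preimage on a cell), into $U_2 = \h^2 \times \C^2$; its image is positive-dimensional and contained in $\pi^{-1}(V_2)$. Taking Zariski closures as in the proof of Proposition~\ref{prop:conjtup} produces a complex algebraic $W \subset \C^2 \times \C^2$ and a positive-dimensional component $A$ of $W \cap \pi^{-1}(V_2)$; by the Ax--Schanuel results of Section~\ref{sec:ax} (passing to the smallest weakly special subvariety containing $A$ and re-applying Theorem~\ref{thm:AxS1} as necessary) one reduces to the case that $\pi(A)$ is a positive-dimensional subset of $V_2 \cap (M \times T)$ for weakly special $M \subseteq Y(1)^2$ and $T \subseteq \G^2$. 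Running through the possibilities exactly as in the proof of Proposition~\ref{prop:ZP2}: if the $\G^2$-direction is frozen, $\pi(A)$ lies in a copy of $V_1 = \{R(x) = t\}$, which has no positive-dimensional weakly special subvariety because $R$ is non-constant --- a contradiction; otherwise $M$ is positive-dimensional and a non-trivial multiplicative relation $R(x_1)^{b_1} R(x_2)^{b_2} = c$ holds identically on $M$, and writing $M$ as $\{x_2 = hx_1\}$ --- whose two coordinates are not identically equal, since $M$ contains the preimages of a modular--torsion tuple and no non-constant modular function is invariant under a subgroup of $\GL$ larger than $\Q^{\times}\cdot\SL$ --- this is a multiplicative dependence modulo constants among the distinct $\GL$-translates $f(z)$ and $f(hz)$, contradicting Condition~\ref{conj:ind}. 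Either way we reach a contradiction, so there are only finitely many modular--torsion tuples.

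The main obstacle is the point-counting step when $\deg\zeta_1$ and $\deg\zeta_2$ are of comparable size: then, unlike in the proof of Lemma~\ref{lem:modpairsdeg}, no single $j$-coordinate is pinned down by the Galois action to boundedly many values, so one cannot immediately work in a fixed fibre $Y_\nu$, and one must instead exploit that all the conjugate tuples satisfy one common multiplicative relation --- this is precisely where the degree comparability furnished by Lemma~\ref{lem:modpairsdeg} is needed. The subsequent functional-transcendence analysis is also delicate: one must ensure the transported curve yields a genuinely positive-dimensional, non-degenerate component $A$ (not collapsed by the many-to-oneness of $g \mapsto gz$ or in the $\C^2$-direction), and then carry out the case analysis of weakly special subvarieties of $Y(1)^2 \times \G^2$ so as to land on a relation that really contradicts Condition~\ref{conj:ind}; here the non-CM-ness and distinctness built into the definition of a modular--torsion tuple, and the rigidity of non-constant modular functions, all come into play.
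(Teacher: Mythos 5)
Your overall strategy is the same as the paper's (arithmetic estimates for a modular--torsion tuple, passage to its $K$-conjugates, point counting on a definable set in the $(g,u_1,u_2)$-coordinates, algebraization, Ax--Schanuel, and a contradiction ultimately resting on Condition~\ref{conj:ind}), but the step you yourself flag as ``the main obstacle'' is not actually carried out, and it is exactly where the paper's proof does its real work. You need the semialgebraic curve to be non-constant in \emph{each} of the two $F_e$-coordinates separately, and \cite[Corollary~7.2]{HabeggerPila16} does not give this: it gives non-constancy of the projection to the chosen coordinate block as a whole, i.e.\ in at least one coordinate. Nor does your count suffice: ``$\geq c\,d$ rational points of height $\leq c\,d^{100}$'' is compatible with all the points lying on pieces that are constant in one of the two $u$-coordinates. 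The paper instead records that among the conjugate points there are $\geq c\Delta$ distinct $u_1$-values \emph{and} $\geq c\Delta$ distinct $u_2$-values (with $\Delta=\min\{\deg\zeta_1,\deg\zeta_2\}$, which is where Lemma~\ref{lem:modpairsdeg} is really used), applies the block-family Counting Theorem \cite[Theorem~3.5]{Pila09} together with analytic cell decomposition, and pigeonholes to find one cell containing a point and non-constant in both $F_e$-coordinates; bi-non-constancy is then carefully preserved through algebraization via normalisation and a Charles--Poonen curve through two points differing in both $u$-coordinates. This is not a technicality: if your curve were constant in, say, $u_2$, the weakly special variety Ax--Schanuel hands you can be of the form $Y(1)\times\{x_2^*\}\times\G\times\{\zeta_2\}$ with $x_2^*$ non-special and $R(x_2^*)=\zeta_2$, whose intersection with $V$ is the positive-dimensional component $\{(x,x_2^*,R(x),\zeta_2)\}$; such components genuinely exist in $V$, are typical, and involve no multiplicative relation between two distinct $\GL$-translates, so neither Condition~\ref{conj:ind} nor an atypicality argument yields a contradiction. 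Your case analysis ``as in Proposition~\ref{prop:ZP2}'' silently omits precisely these weakly special varieties with a constant non-special modular coordinate or a single frozen torsion coordinate.

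A second, related gap is the transport/Zariski-closure step. Because $x_1,x_2$ are not $j$-special, their preimages in $F_j$ are not quadratic points and had to be quantified away, so your curve lives in the $(g,u_1,u_2)$-space; a ``continuous definable choice of preimage'' $z(g,u_1,u_2)$ involves $j^{-1}$ and is definable but not semialgebraic, so ``taking Zariski closures as in the proof of Proposition~\ref{prop:conjtup}'' does not produce a complex algebraic $W\subset\C^2\times\C^2$ of controlled dimension meeting $\pi^{-1}(V_2)$ in excess dimension (the Zariski closure of a non-semialgebraic definable arc can be everything, and then Ax--Schanuel says nothing). The paper gets around this by working entirely in $(g,u_1,u_2)$-space with local inverses of $f$ to build an analytic set, invoking the Nash-set results of Adamus--Rand there, and only afterwards constructing the algebraic surface $\{(z,gz,u_1,u_2):z\in\C,\ (g,u_1,u_2)\in\nu(T)\}$ with $z$ a free variable; none of this is reproduced or replaced in your proposal. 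So while the skeleton is right, the two steps that make Proposition~\ref{prop:modpairs} harder than Lemma~\ref{lem:modpairsdeg} -- forcing non-constancy in both torsion coordinates, and algebraizing without semialgebraic $z$-coordinates -- are missing.
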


\begin{proof}
	Write $f(z) = R(j(z))$ for some rational function $R$ with algebraic coefficients. Let $K$ be a number field over which $R$ is defined. In this proof, all constants are positive and may depend possibly on $f, R, K$; any other dependencies will be explicitly indicated.
	
	Observe that the proof of Proposition~\ref{prop:ZP2} implies that every modular--torsion tuple $(x_1, x_2, \zeta_1, \zeta_2)$ gives rise to a maximal atypical component $\{(x_1, x_2, \zeta_1, \zeta_2)\}$ of $V$. In particular, no modular--torsion tuple is contained in a positive-dimensional atypical component of $V$.
	
	We define the complexity $\Delta$ of a modular--torsion tuple $(x_1, x_2, \zeta_1, \zeta_2)$ by $\Delta = \min \{ \deg \zeta_1, \deg \zeta_2\}$. The degree bound in Lemma~\ref{lem:modpairsdeg} and the fact that $R$ is finite-to-one together imply that, for any $C > 0$, there are only finitely many modular--torsion tuples with complexity $\leq C$. Suppose then, for contradiction, that there are infinitely many modular--torsion tuples. In particular, there are modular--torsion tuples of arbitrarily large complexity.
	
	Now let
	\begin{align*} Y = \{&(g, z_1, z_2, u_1, u_2) \in \GLR \times F_j^2 \times F_e^2 : \\
		&g z_1 = z_2, \, R(j(z_1)) = e(u_1), \, R(j(z_2)) = e(u_2)\}, \end{align*}
	and let
	\[ Z = \{(g, u_1, u_2) : \exists z_1, z_2 \, (g, z_1, z_2, u_1, u_2) \in Y\}\]
	be its projection. Both sets are definable.
	
	Suppose $(x_1, x_2, \zeta_1, \zeta_2)$ is a modular--torsion tuple of complexity $\Delta$. We show that this tuple leads to a rational point on $Z$ of bounded height. Say $\zeta_1$ is a primitive $m$th root of unity and $\zeta_2$ is a primitive $n$th root of unity. So $\deg \zeta_1 = \phi(m_1)$ and $\deg \zeta_2 = \phi(m_2)$. Write $d_i = \deg \zeta_i$. Without loss of generality, we assume that $d_2 \geq d_1$.
	
	We have that $\zeta_1 = e(k/m_1)$ for some $0 \leq k < m_1$ with $\gcd(k, m_1) = 1$ and $\zeta_2 = e(l/m_2)$ for some $0 \leq l < m_2$ with $\gcd(l, m_2) = 1$. In particular, $H(k/m_1) = m_1$ and $H(l/m_2) = m_2$.
	
	Let $E_1, E_2$ be elliptic curves with $j$-invariants $x_1, x_2$ respectively. As in the proof of Lemma~\ref{lem:modpairsdeg}, we may then bound the degrees and logarithmic heights of $x_1, x_2$. Once again, we obtain that the degree of an isogeny between $E_1, E_2$ is bounded by $c_1 (\max \{m_1, m_2\})^{c_2}$. Let $\tau_1, \tau_2 \in F_j$ such that $x_i = j(\tau_i)$. By \cite[Lemma~5.2]{HabeggerPila12}, there then exists $g \in \GL$ such that $g \tau_1 = \tau_2$ and the height of $g$ (viewed as a vector of its entries) is bounded by $c_3 (\max\{m_1, m_2\})^{c_4}$.
	
	The modular--torsion tuple $(x_1, x_2, \zeta_1, \zeta_2)$ thus gives rise to the rational point $\sigma = (g, k/m_1, l/m_2) \in Z$.  The height bound on $g$, Lemma~\ref{lem:modpairsdeg}, and the elementary estimate that $\phi(m_i) \geq \sqrt{m_i/2}$ together imply that this point $\sigma$ has height bounded by $c_5 \Delta^{c_6}$.
	
	Observe that
	\[[K(\zeta_i) : K][K: \Q] = [K(\zeta_i) : \Q] \geq d_i \geq \Delta\]
	since $\Delta = \min \{d_1, d_2\}$, and so
	\[ [K(\zeta_i) : K] \geq c_7 \Delta. \]
	The modular--torsion tuple $(x_1, x_2, \zeta_1, \zeta_2)$ thus has $ \geq c_7 \Delta$ conjugates over $K$. Each of these conjugates $(x_1^{(r)}, x_2^{(r)}, \zeta_1^{(r)}, \zeta_2^{(r)})$ is again a modular--torsion tuple. In particular, note that each $\zeta_i^{(r)}$ is again a root of unity of order $m_i$. Each distinct conjugate $(x_1^{(r)}, x_2^{(r)}, \zeta_1^{(r)}, \zeta_2^{(r)})$ thus gives rise, in the same way as above, to a rational point $(g_r, k_r/m_1, l_r/m_2) \in Z$ which has height bounded by $c_5 \Delta^{c_6}$. Moreover, there must also be $\geq c_7 \Delta$ distinct coordinates $k_r/m_1$ and $\geq c_7 \Delta$ distinct coordinates $l_i/m_2$ among the resulting rational points.
	
	We now apply the Counting Theorem in the form of \cite[Theorem~3.5]{Pila09} to the set $Z$, with $\epsilon = 1/{2 c_6}$ say. We may write the resulting basic block families as $W_i$, where $i=1, \ldots, J$, for some constant $J$. In particular, the rational points of $Z$ with height at most $T=c_5 \Delta^{c_6}$ are contained in $\leq c_8 T^{1/2 c_6} = c_9 \Delta^{1/2}$ basic blocks, each of which is a fibre of one of $W_1, \ldots, W_J$. 
	
Recall that the structure $\R_{\mathrm{an}, \exp}$ has analytic cell decomposition \cite{vandenDriesMiller94}. We may thus decompose each of the basic block families $W_1, \ldots, W_J$ into finitely many analytic cells $P_i$, and moreover this may be done in such a way that it induces an analytic cell decomposition of each fibre of the basic block family over its base.
	
	As shown above, a modular--torsion tuple of complexity $\Delta$ gives rise, via its $K$-conjugates, to a collection of rational points $(g, u_1, u_2)$ on $Z$ which are all of height $\leq c_5 \Delta^{c_6}$ and among which there $\geq c_7 \Delta$ distinct $u_1$ coordinates and $\geq c_7 \Delta$ distinct $u_2$ coordinates. Provided that $\Delta$ is suitably large, we thus see that at least one of the analytic cells $P_i$ must contain one of these rational points and also have non-constant projection to both its $F_e$ coordinates. Fix such a cell $P$ and such a rational point $p =(g_p, u_{1, p}, u_{2, p}) \in P$.
	
	The cell $P$ is contained in a basic block $B$ that is itself contained in $Z$. As in the proof of \cite[Proposition~3.4(1)]{Pila09}, one may then find an open neighbourhood $\Omega_\R \subset \GLR \times \C^2$ of $p$ such that the intersection $\Omega_\R \cap B$ is a real semialgebraic set, which we denote $S$. Note that $p \in S$.
	
	Provided that $\Delta$ is sufficiently large, we must also have that
	\[ e(u_{1, p}), e(u_{2, p}) \notin \{ f(w) : w \in \h \mbox{ and } f'(w)=0\}.\]
	Therefore, the function $f$ is locally invertible at $e(u_{1,p}), e(u_{2,p})$. Given a sufficiently small open neighbourhood $\Omega \subset \mathrm{GL}_2(\C) \times \C^2$ of $p$, we may therefore define an analytic subvariety $A \subset \Omega$ by
	\begin{align*} 
		A = \{&(g, e(u_1), e(u_2)) \in \Omega : g= \begin{pmatrix}
			a & b\\
			c & d
		\end{pmatrix} \mbox{ and }\\
 &\prod_{h_1, h_2} ((c h_1(e(u_1)) + d)h_2(e(u_2)) - (a h_1(e(u_1))+b))=0\},
\end{align*}
	where the product runs over all the local inverses $h_1, h_2$ of $f$ at $e(u_{1, p}), e(u_{2,p})$ respectively which hit the fundamental domain $F_e$. Without loss of generality, we may assume that $\Omega_\R = \Omega \cap (\GLR \times \C^2)$, and thus $S \subset A$.
	
	As in the proof of Proposition~\ref{prop:conjtup}, we may now apply the results of \cite{AdamusRand13}. There exists an open neighbourhood $\Omega' \subset \Omega$ of $p$ and a set $Q \subset \Omega'$, where $Q$ may be written as a finite union of irreducible Nash subsets of $\Omega'$ which each contain $p$, such that $p \in \Omega' \cap S \subset Q \subset A$. 
	
	If every complex-analytically irreducible component of $Q$ projects constantly to one or more of its $F_e$ coordinates, then the cell $P$ would have constant projection to (at least) one of its $F_e$ coordinates by real analytic continuation. This cannot happen. Hence, there must exist a complex algebraic subvariety $W \subset \mathrm{GL}_2(\C) \times \C^2$ such that there is a complex-analytically irreducible component $D \subset \Omega' \cap W$ such that $p \in D \subset A$ and $D$ projects non-constantly to both its $F_e$ coordinates.
	
	Denote by $\nu \colon W^{\nu} \to W$ the normalisation of the variety $W$. There exists an open neighbourhood in $\nu^{-1}(W \cap A)$ of some preimage $p^{\nu}$ of $p$. We may choose a point $q$ in this neighbourhood such that, writing $\nu(q) = (g, u_1, u_2)$, we have that $u_1 \neq u_{1, p}$ and $u_2 \neq u_{2, p}$. By \cite[Corollary~1.9]{CharlesPoonen16}, we may then find an irreducible complex algebraic curve $T \subset W^{\nu}$ which passes through the points $p^{\nu}, q$.
	
	The Zariski-closure of the set
	\[ \{ (z, gz, u_1, u_2) : z \in \C, (g, u_1, u_2) \in \nu(T)\}\]
	is then a complex algebraic surface, which we denote $H$. Let $\pi_e = (j, j, e, e) \colon \h^2 \times \C^2 \to Y(1)^2 \times \G^2$, where $e(z) = \exp(2 \pi i z)$ as in the proof of Lemma~\ref{lem:modpairsdeg}. There exists a positive-dimensional complex-analytically irreducible component $H_0$ of $H \cap \pi_e^{-1}(V)$ which has non-constant projection to both its $u_i$ coordinates and also contains a point $(z, g_p z, u_{1, p}, u_{2,p})$ whose image under $\pi_e$ is a modular--torsion tuple.
	
	The Ax--Schanuel results of Section~\ref{sec:ax} (applied with $\pi_e$ in place of $\pi$) then imply that there exists a weakly special subvariety $W_0$ of $U$ such that $H_0 \subset W_0 \subset \pi_e^{-1}(V)$. Since this weakly special subvariety $W_0$ has no constant coordinates, it is in fact a special subvariety. It must also have codimension at least two, in order to be contained in $\pi_e^{-1}(V)$. Taking the image under $\pi_e$, we thus obtain a positive-dimensional atypical component of $V$ which contains $\pi_e(z, g_p z, u_{1, p}, u_{2,p})$. This is a positive-dimensional atypical component of $V$ which contains a modular--torsion tuple. However, we know already that there are no such components. We thus obtain the desired contradiction, and so the proof is complete.
\end{proof}

Recall that Proposition~\ref{prop:conjtup} implies that if $f \in \alg(j)$ is non-constant and satisfies Condition~\ref{conj:ind}, then the finiteness condition for pairs holds for $f$. Therefore, one may deduce from Propositions~\ref{prop:ZP2} and \ref{prop:modpairs} the following case of Conjecture~\ref{conj:ZP}.

\begin{prop}\label{prop:uncondZP2}
	Let $f \in \alg(j)$ be non-constant. Suppose that $f$ satisfies Condition~\ref{conj:ind}. Then Conjecture~\ref{conj:ZP} holds for $n = 2$.
\end{prop}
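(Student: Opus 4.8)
The plan is to deduce this statement directly from Proposition~\ref{prop:ZP2}, which reduces Conjecture~\ref{conj:ZP} for $n=2$ to three hypotheses on $f$: that $f$ satisfies Condition~\ref{conj:ind}; that $f$ satisfies the finiteness condition for pairs (Definition~\ref{def:fin}); and that there are only finitely many modular--torsion tuples for $f$. The first of these is given by assumption, so the work of the proof is to verify the remaining two for a non-constant $f \in \alg(j)$ satisfying Condition~\ref{conj:ind}.

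First I would dispatch the finiteness condition for pairs. Applying Proposition~\ref{prop:conjtup} with $n=2$ — whose hypotheses ($f \in \alg(j)$ non-constant, Condition~\ref{conj:ind}) are exactly those in force — yields that there are only finitely many $2$-tuples $(\sigma_1, \sigma_2)$ of distinct $f$-special points such that $\{\sigma_1, \sigma_2\}$ is multiplicatively dependent and minimal for this property, which is precisely the finiteness condition for pairs. Next I would invoke Proposition~\ref{prop:modpairs}, whose hypotheses are again that $f \in \alg(j)$ is non-constant and satisfies Condition~\ref{conj:ind}; this gives the finiteness of modular--torsion tuples for $f$. With all three hypotheses of Proposition~\ref{prop:ZP2} thereby verified, its conclusion — that Conjecture~\ref{conj:ZP} holds for $n=2$ — follows at once.

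There is no genuine obstacle at this final stage: the statement is a bookkeeping combination of results already established. The substantive difficulty lies upstream, in the point-counting arguments underpinning Propositions~\ref{prop:conjtup} and \ref{prop:modpairs} — in particular the degree bound of Lemma~\ref{lem:modpairsdeg} and the ensuing Ax--Schanuel argument used to rule out positive-dimensional atypical components containing modular--torsion tuples — together with the lengthy codimension-by-codimension case analysis carried out in the proof of Proposition~\ref{prop:ZP2}. Once those are in hand, the present proposition is immediate.
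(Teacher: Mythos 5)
Your proposal is correct and matches the paper's own deduction exactly: the paper likewise obtains the finiteness condition for pairs from Proposition~\ref{prop:conjtup}, the finiteness of modular--torsion tuples from Proposition~\ref{prop:modpairs}, and then concludes via Proposition~\ref{prop:ZP2}. Nothing further is needed.
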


\section{An extension to finite rank}\label{sec:finrk}

In this section we prove Theorem~\ref{thm:gamdep}. As in Section~\ref{sec:tup}, we will prove a stronger conditional result under the assumption of Condition~\ref{conj:ind}. Theorem~\ref{thm:gamdep} then follows from Proposition~\ref{prop:conjgam} as it corresponds to those cases where the divisor condition holds and Condition~\ref{conj:ind} is thus known via Theorem~\ref{thm:modind}.

\begin{prop}\label{prop:conjgam}
		Let $f \colon \h \to \C$ be a non-constant modular function such that $f \in \alg(j)$. Assume Condition~\ref{conj:ind} holds for $f$. Let $n \geq 1$ and $\Gamma \leq \G$ be of finite rank. Then there exist only finitely many $n$-tuples $(\sigma_1, \ldots, \sigma_n)$ of distinct $f$-special points such that the set $\{\sigma_1, \ldots, \sigma_n \}$ is $\Gamma$-dependent, but no proper subset of $\{\sigma_1, \ldots, \sigma_n \}$ is $\Gamma$-dependent.
\end{prop}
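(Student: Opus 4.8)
The plan is to adapt the o-minimal counting argument used to prove Proposition~\ref{prop:conjtup}, but now working over the finite-rank group $\Gamma$ rather than the trivial group $\{1\}$. First I would reduce to the case where $\Gamma = \Gamma_0$ is finitely generated: if $\Gamma$ has finite rank with finitely generated $\Gamma_0 \subset \Gamma$ such that every $\gamma \in \Gamma$ has a power in $\Gamma_0$, then a $\Gamma$-dependence $\prod \sigma_i^{a_i} \in \Gamma$ yields, after raising to a suitable power $m$ (bounded in terms of the relation), a relation $\prod \sigma_i^{m a_i} \in \Gamma_0$; minimality is essentially preserved, and the extra factor of $m$ can be absorbed into the height bounds. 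So assume $\Gamma_0 = \langle \gamma_1, \ldots, \gamma_t \rangle$. Now an $f$-dependent (over $\Gamma_0$) tuple $\sigma = (\sigma_1, \ldots, \sigma_n)$ of distinct $f$-special points satisfies $\prod \sigma_i^{a_i} = \prod \gamma_k^{c_k}$ for integers $a_i$ (not all zero) and $c_k$, minimal for this property. Writing $\sigma_i = R(x_i)$ for $j$-special $x_i$ and passing to logarithms $u_i$ with $\exp(u_i) = \sigma_i$ and fixed branches $\ell_k$ of $\log \gamma_k$, this becomes a linear relation $\sum a_i u_i - \sum c_k \ell_k \in 2\pi i \Z$.

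The next step is to set up the definable sets exactly as in the proof of Proposition~\ref{prop:conjtup}, but with the ambient mixed Shimura variety enlarged to $X_{n, n+t} = Y(1)^n \times \G^{n+t}$ and $V$ replaced by
\[V_\Gamma = \{(x_1, \ldots, x_n, t_1, \ldots, t_n, s_1, \ldots, s_t) : t_i = R(x_i),\ s_k = \gamma_k\},\]
so that the generators $\gamma_k$ are recorded as fixed $\G$-coordinates. Working with a fundamental domain $F_j^n \times F_{\exp}^n$ for the $x_i, u_i$ and using fixed lifts for the $\ell_k$, the height bounds go through as before: by Lemmas~\ref{lem:htspec}, \ref{lem:htpre}, \ref{lem:degspec} the heights of the $z_i$-coordinates and the logarithmic heights and degrees of the $\sigma_i$ are polynomially controlled by the complexity $\Delta = \max_i |\Delta(\sigma_i)|$; a Loher--Masser-type bound (the analogue of Lemma~\ref{lem:htmultdep}, now for relations $\prod \alpha_i^{b_i} = \prod \gamma_k^{d_k}$ in the number field $K(\gamma_1,\ldots,\gamma_t)$ with the $\gamma_k$ as fixed parameters — this is the group-variety version, e.g. from the same Loher--Masser circle of ideas, or can be derived by treating the $\gamma_k$ as extra "variables" of bounded height) bounds the $a_i$ and $c_k$ polynomially in $\Delta$. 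As in Lemma~\ref{lem:conj}, the $K$-conjugates of $\sigma$ are again $f$-special points of the same discriminant satisfying the same relation (the fixed parameters $\gamma_k$ are unchanged under the relevant Galois action, or one enlarges $K$ to contain them), giving $\gg \Delta^{1/4 - \epsilon}$ conjugate tuples by Lemma~\ref{lem:degspec}, all mapping to distinct rational points of bounded height $\ll \Delta^{O(1)}$ on the relevant definable set.

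Applying the Pila--Wilkie counting theorem in the algebraic-points form of \cite{HabeggerPila16, Pila09} as in Proposition~\ref{prop:conjtup}, for $\Delta$ large we extract a semialgebraic curve, continue it complex-analytically using \cite{AdamusRand13}, and obtain a positive-dimensional complex algebraic $A$ contained in $\pi^{-1}(\tilde V_\Gamma)$ where $\tilde V_\Gamma = \{(z, t) \in \h^n \times \C : \prod f(z_i)^{a_i} = \exp(t) \cdot \prod \gamma_k^{c_k}\}$, with non-constant projection to the $\h^n$-coordinates. By the Ax--Schanuel results of Section~\ref{sec:ax} (Theorem~\ref{thm:AxS}, applied in $X_{n, n+t}$ with the $\gamma_k$-coordinates constant — i.e. on an appropriate weakly special $U' \subset U$), $A$ lies in a weakly special $W_1 \times W_2$; since the projection $\tilde V_\Gamma \to \h^n$ has discrete fibres, $W_2$ is a point, so $W_1$ is a positive-dimensional weakly special subvariety of $\h^n$ on which $\prod f(z_i)^{a_i}$ is constant. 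As the $a_i$ are all nonzero by minimality and $f(W_1)$ has no two identically equal coordinates (the $f(\tau_i)$ at a preimage of $\sigma$ are distinct), taking the image under $f$ produces a multiplicative dependence modulo constants among pairwise distinct $\GL$-translates of $f$, contradicting Condition~\ref{conj:ind}. The main obstacle I expect is the bookkeeping around the finite-rank reduction and, in particular, securing the correct Loher--Masser-style height bound for relations of the form $\prod \alpha_i^{b_i} \in \Gamma_0$ with the coefficients $b_i$ controlled polynomially in the heights of the $\alpha_i$ (with a constant depending on $\Gamma_0$); once that ingredient is in place, the rest of the argument is structurally identical to Section~\ref{sec:tup}.
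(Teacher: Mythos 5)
Your plan follows essentially the same route as the paper's proof: reduce to a finitely generated $\Gamma_0$, record its generators as fixed extra $\G$-coordinates, bound the exponents by applying a Loher--Masser estimate (Lemma~\ref{lem:htmultdep}) to the combined set of the $\sigma_i$ and the generators, run the conjugate-counting/Pila--Wilkie argument over $K(\gamma_1,\ldots,\gamma_t)$, and finish with Ax--Schanuel plus Condition~\ref{conj:ind}. The only detail you gloss over is the paper's preliminary replacement of $\Gamma$ by $\Gamma\cap\alg$ (harmless, since $f$-special points are algebraic), which is what legitimises treating the generators as elements of a number field in the height and Galois-orbit estimates.
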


Fix such a function $f$ and such a group $\Gamma$. If $\Gamma$ has rank $0$, then the result follows immediately from Proposition~\ref{prop:conjtup}. So we may assume that $\Gamma$ has positive rank. As usual, we write $f(z) = R(j(z))$ for $R$ some rational function with algebraic coefficients. Let $K$ be a number field containing the coefficients of $R$.

 The group $\Gamma$ is of finite rank, so there exists $\Gamma_0 \leq \Gamma$ finitely generated such that for every $\gamma \in \Gamma$ there exists $m \geq 1$ such that $\gamma^m \in \Gamma_0$. Thus every $\Gamma$-dependent tuple is also $\Gamma_0$-dependent. Therefore, we may and do assume that $\Gamma$ is finitely generated.

Further, $f$-special points are algebraic. So if $\prod \sigma_i^{a_i} = \gamma \in \Gamma$ for some $f$-special points $\sigma_i$ and $a_i \in \Z$, then $\gamma \in \Gamma \cap \alg$. Now $\Gamma \cap \alg$ is a subgroup of the finitely generated abelian group $\Gamma$, so is itself finitely generated. Replacing $\Gamma$ with $\Gamma \cap \alg$ as necessary we may and do assume as well that $\Gamma$ is generated by algebraic elements.

We thus consider $\Gamma = \langle b_1, \ldots, b_k \rangle$, where $k \geq 1$ and every $b_i \in \alg$. A tuple of $f$-special points $(\sigma_1, \ldots, \sigma_n)$ is then $\Gamma$-dependent if some relation 
\[\prod_{i=1}^n \sigma_i^{a_i} = \prod_{i=1}^k b_i^{\alpha_i}\]
holds with $a_i, \alpha_i \in \Z$ and the $a_i$ not all zero. We may also assume that the set $\{b_1, \ldots, b_k\}$ is multiplicatively independent; in particular, no $b_i$ is a root of unity. We let $K_0 = K(b_1, \ldots, b_k)$.

Now fix $n \geq 1$. Let $X =X_{n,n+k}=Y(1)^n \times \G^{n+k}$, and
\[V = \{(x_1, \ldots, x_n, t_1, \ldots, t_n, b_1, \ldots, b_k) \in X \colon  t_i = R(x_i) \mbox{ for } i=1,\ldots, n\}.\] 
We refer to Section~\ref{sec:ax} for the definition of a (weakly) special subvariety of $X$. 

Now for the proof of Proposition~\ref{prop:conjgam}. Constants $c, c'$ will be positive and depend only on $f$, $n$, and $\Gamma$, but will vary between occurrences and are in general non-effective. We note that the proofs of Lemmas~\ref{lem:degspec} and \ref{lem:conj} both generalise straightforwardly when we replace $K$ with $K_0$ in their statements.

\begin{proof}[Proof of Proposition~\ref{prop:conjgam}]
	For a tuple $\sigma = (\sigma_1, \ldots, \sigma_n)$ of $f$-special points, define the complexity $\Delta(\sigma)$ of $\sigma$ to be $\Delta(\sigma) = \max \{\lvert \Delta(\sigma_1) \rvert, \ldots, \lvert \Delta(\sigma_n)\rvert \}$. Call an $n$-tuple $(\sigma_1, \ldots, \sigma_n)$ of distinct $f$-special points such that the set $\{\sigma_1, \ldots, \sigma_n \}$ is $\Gamma$-dependent, but no proper subset of $\{\sigma_1, \ldots, \sigma_n \}$ is $\Gamma$-dependent, a $\Gamma$-tuple. We show that there are only finitely many $\Gamma$-tuples.
	
	Suppose not. Then there are $\Gamma$-tuples $\sigma$ of arbitrarily large complexity $\Delta(\sigma)$. Fix $\nu = (\nu_1, \ldots, \nu_k) \in F_{\exp}^k$ a preimage of $(b_1, \ldots, b_k)$. We let 
	\[Y = \{ (z, u \, \nu, r, s) \in F_j^{n} \times F_{\exp}^{n+k} \times \R^{n+k} \times \R \colon R(j(z)) = \exp(u), r \cdot (u \, \nu) = 2\pi i s\},\]
	where $F_j, F_{\exp}$ are the standard fundamental domains for the actions of $\SL$ on $\h$ and $2 \pi i \Z$ on $\C$ respectively (see Section~\ref{sec:tup}). Then let $Z$ be the projection of $Y$ to $F_j^n \times \R^{n+k} \times \R$. Note that $Y, Z$ are definable.
	
	Fix a $\Gamma$-tuple $\sigma = (\sigma_1, \ldots, \sigma_n)$ of complexity $\Delta = \Delta(\sigma)$. We may write $\sigma_i = R(x_i)$ for some $j$-special point $x_i$. The tuple $\sigma$ satisfies some multiplicative relation
	\[\prod_{i=1}^n \sigma_i^{a_i} \prod_{i=1}^k b_i^{\alpha_i} =1\]
	for $a_i, \alpha_i \in \Z$, with the $a_i$ not all zero. The point 
	\[\hat{\sigma} = (x_1, \ldots, x_n, \sigma_1, \ldots, \sigma_n, b_1, \ldots, b_k) \in Y(1)^n \times \G^{n+k}\]
	 has a preimage $(z, u \, \nu) \in F_j^{n} \times F_{\exp}^{n+k}$. This gives rise to a point $(z , u \, \nu, \beta \, \beta', \delta) \in Y$. Here the $\beta, \beta', \delta$ coordinates are rational integers, with the $\beta_i$ not all zero, which record the multiplicative dependence of the $\sigma_i, b_i$; that is,
	 \[\sum _{i=1}^n \beta_i u_i + \sum_{i=1}^{k} \beta_i' \nu_i = 2 \pi i \delta.\]
	
	We may apply Lemma~\ref{lem:htmultdep} to some minimally multiplicatively dependent set $S$, where $\{\sigma_1, \ldots, \sigma_n\} \subset S \subset \{\sigma_1, \ldots, \sigma_n, b_1, \ldots, b_k\}$. The integers $\beta_i, \beta_i'$ may thus be chosen such that 
	\[\lvert \beta_i \rvert \leq c d^{n+k} (\log d) \prod_{\substack{j=1\\ j \neq i}}^{n} h(\sigma_j)\]
	and 
	\[\lvert \beta_i' \rvert \leq c d^{n+k} (\log d) \prod_{j=1}^{n} h(\sigma_j).\]
	Here $d \geq 2$ is the degree of a number field containing $\sigma_1, \ldots, \sigma_n, b_1, \ldots, b_k$. (We absorb the dependency on the logarithmic heights of the $b_i$ into our constant.) Using Lemmas~\ref{lem:htspec} and \ref{lem:degspec}, we may thus bound $\lvert \beta_i \rvert$ and $\lvert \beta_i' \rvert$ by $c \Delta^{n(n+k)}$. Observe that for $u_i, \nu_i \in F_{\exp}$, the imaginary part of $u_i, \nu_i$ is bounded by $2 \pi$ in absolute value. Then, using the relation
	\[ \sum _{i=1}^n \beta_i u_i + \sum_{i=1}^{k} \beta_i' \nu_i = 2 \pi i \delta,\]
we may also bound $\lvert \delta \rvert$ by $c \Delta^{n(n+k)}$. Since $\beta_i, \beta_i', \delta$ are rational integers, their heights are also bounded by $c \Delta^{n(n+k)}$ since $H(l) = \lvert l \rvert$ for $l \in \Z \setminus \{0\}$.
	
	The point $(z, u \, \nu, \beta \, \beta', \delta)$ projects to a point $(z, \beta \, \beta', \delta) \in Z$, which is quadratic in the $F_j$ coordinates and integral in the $\R$ coordinates.  The height of the $z$ coordinates may be bounded by $c \Delta$ thanks to Lemma~\ref{lem:htpre}. Combining this with the height bounds from the previous paragraph, we see that the height of the point $(z, \beta \, \beta', \delta)$ is thus bounded by $c \Delta^{n(n+k)}$. The $\Gamma$-tuple $\sigma$ thus gives rise to a quadratic point of $Z$ with height bounded by $c \Delta^{n(n+k)}$.
	
	By the aforementioned generalisation of Lemma~\ref{lem:degspec} with $\epsilon = 1/4$, a $\Gamma$-tuple $\sigma$ of complexity $\Delta$ has $\geq c \Delta^{1/4}$ distinct conjugates over $K_0$, provided $\Delta$ is large enough. By the similar generalisation of Lemma~\ref{lem:conj} and the fact that $b_1, \ldots, b_k \in K_0$, any $K_0$-conjugate of such a $\Gamma$-tuple is again a $\Gamma$-tuple and is also of complexity $\Delta$. In particular, the $K_0$-conjugates all satisfy the same $\Gamma$-dependence. 
	
	Therefore, each $K_0$-conjugate $\tilde{\sigma}$ of $\sigma$ also gives rise, in the same way as $\sigma$, to a quadratic point $(\tilde{z}, \beta \, \beta', \tilde{\delta})$ of $Z$ with height bounded by $c \Delta^{n(n+k)}$. Note that that the $\beta, \beta'$ coordinates are the same for each conjugate. Therefore, a $\Gamma$-tuple $\sigma$ of sufficiently large complexity $\Delta$ gives rise to at least $c' T^{1/4n(n+k)}$ quadratic points on $Z$ with height at most $T = c \Delta^{n(n+k)}$, each corresponding to a distinct $\Gamma$-tuple. 
	
	Let $Z'$ be the image of $Z$ under the map $(z, r, s) \mapsto (z, 2 \pi i s)$. Denote by $Y_{(\beta_1, \ldots, \beta_n, \beta_1' \ldots, \beta_k')}$ the fibre of $Y$ over the point $(\beta_1, \ldots, \beta_n, \beta_1' \ldots, \beta_k')$. We may now apply the Counting Theorem in the form of \cite[Corollary~7.2]{HabeggerPila16} to $Y_{(\beta_1, \ldots, \beta_n, \beta_1' \ldots, \beta_k')}$, and then complete the argument as in the proof of Proposition~\ref{prop:conjtup}. We will only sketch the remainder of the proof, and refer to the proof of Proposition~\ref{prop:conjtup} for details.
	
	 Arguing as in the proof of of Proposition~\ref{prop:conjtup}, if the complexity $\Delta$ of $\sigma$ is suitably large, then we obtain a complex algebraic subvariety $W \subset \C^{n+1}$ and a complex-analytically irreducible component $A \subset (\h^n \times \C) \cap W$ such that $A$ has non-constant projection to its $\h^n$ coordinates, $A$ contains a point of $Z'$ which corresponds to a $K_0$-conjugate of $\sigma$, and $A \subset \tilde{V}$ where
	\[\tilde{V} = \{(z, t) \in \h^n \times \C : \prod_{i=1}^n f(z_i)^{\beta_i} \prod_{i=1}^k b_i^{\beta_i'} = \exp(t)\}.\]
	By Ax--Schanuel, we thus obtain that there are weakly special subvarieties $W_1 \subset \h^n$ and $W_2 \subset \C$ such that $A$ is contained in $W_1 \times W_2$ and $W_1 \times W_2 \subset \tilde{V}$. As in the proof of Proposition~\ref{prop:conjtup}, one must have that $W_2 = \{2 \pi i \delta'\}$ for some $\delta' \in \Z$.
	
	Therefore, $W_1$ is a positive-dimensional weakly special subvariety of $\h^n$ which is contained in the set 
	\[\{z \in \h^n : \prod_{i=1}^n f(z_i)^{\beta_i} \prod_{i=1}^k b_i^{\beta_i'} = 1\}.\]
	In addition, $W_1$ contains a pre-image $(\tau_1, \ldots, \tau_n)$ of some $\Gamma$-tuple $(f(\tau_1), \ldots, f(\tau_n))$. In particular, $f(W_1)$ has no two identically equal coordinates, since the $f(\tau_i)$ are pairwise distinct. In addition, all the $\beta_i$ are non-zero, since the set $\{f(\tau_1), \ldots, f(\tau_n)\}$ is minimally $\Gamma$-dependent. Taking the image of $W_1$ under $f$, we therefore obtain a multiplicative dependence modulo constants among some pairwise distinct $\GL$-translates of $f$. This contradicts Condition~\ref{conj:ind}, and so we are done.
	\end{proof}

For the $n=1$ case, Proposition~\ref{prop:conjgam} holds unconditionally. 

\begin{prop}\label{prop:gam1}
	Let $f \colon \h \to \C$ be a non-constant modular function such that $f \in \alg(j)$. Let $\Gamma \leq \G$ be of finite rank. Then there exist only finitely many $f$-special points $\sigma$ such that the set $\{\sigma\}$ is $\Gamma$-dependent.
\end{prop}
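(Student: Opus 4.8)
The plan is to observe that Proposition~\ref{prop:gam1} is exactly the case $n = 1$ of Proposition~\ref{prop:conjgam}, and that when $n = 1$ the hypothesis that Condition~\ref{conj:ind} holds for $f$ may be dropped: the only use of that hypothesis in the proof of Proposition~\ref{prop:conjgam} becomes vacuous. So I would re-run the argument of that proof verbatim with $n = 1$ and check carefully where Condition~\ref{conj:ind} actually enters.

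In detail: suppose for contradiction that there are infinitely many $f$-special points $\sigma$ with $\{\sigma\}$ $\Gamma$-dependent, i.e.\ $\sigma^{a} \in \Gamma$ for some $a \in \Z \setminus \{0\}$; these are precisely the $\Gamma$-tuples in the sense of Proposition~\ref{prop:conjgam} when $n = 1$, minimality of a singleton being automatic. Following the proof of Proposition~\ref{prop:conjgam}, after the standard reductions I may take $\Gamma = \langle b_1, \ldots, b_k \rangle$ with the $b_i$ multiplicatively independent algebraic numbers, set $K_0 = K(b_1, \ldots, b_k)$ where $f(z) = R(j(z))$ with $R$ defined over $K$, and work in $X = Y(1) \times \G^{1+k}$ with the associated definable sets $Y, Z$. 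Each such $\sigma$, of complexity $\Delta$, gives a point of $Y$ recording a relation $\beta_1 u_1 + \sum_{i=1}^{k} \beta_i' \nu_i = 2 \pi i \delta$ with $\beta_1 \neq 0$; the arithmetic estimates (Lemmas~\ref{lem:htmultdep}, \ref{lem:htspec}, \ref{lem:degspec}, \ref{lem:htpre}) bound the resulting point of $Z$ to be quadratic of height $\leq c \Delta^{1+k}$, and the generalisations of Lemmas~\ref{lem:degspec} and \ref{lem:conj} produce $\geq c \Delta^{1/4}$ conjugate $\Gamma$-tuples over $K_0$, all satisfying the same relation, hence $\geq c' T^{1/4(1+k)}$ quadratic points of $Z$ of height $\leq T = c \Delta^{1+k}$. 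Applying the Counting Theorem in the form of \cite[Corollary~7.2]{HabeggerPila16} to the appropriate fibre of $Y$, and then the Ax--Schanuel results of Section~\ref{sec:ax} exactly as before, for $\Delta$ sufficiently large one obtains a positive-dimensional weakly special subvariety $W_1 \subseteq \h$ with $W_1 \subseteq \{ z \in \h : f(z)^{\beta_1} \prod_{i=1}^{k} b_i^{\beta_i'} = 1 \}$ and $\beta_1 \neq 0$.

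This is the only point at which the two proofs diverge. In Proposition~\ref{prop:conjgam} one invokes Condition~\ref{conj:ind} to exclude such a $W_1$; but when $n = 1$ the only positive-dimensional weakly special subvariety of $\h$ is $\h$ itself, so $W_1 = \h$ and $f^{\beta_1} \equiv (\prod_{i=1}^{k} b_i^{\beta_i'})^{-1}$, a constant. This is impossible, since $f$ is a non-constant meromorphic function on $\h$, so no non-zero power of $f$ is constant (if $f^{\beta_1}$ were constant, $f$ would take only finitely many values, hence be constant). This contradiction completes the proof.

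The argument is essentially routine, so there is no serious obstacle; the one thing requiring care is the verification that Condition~\ref{conj:ind} is used nowhere else in the proof of Proposition~\ref{prop:conjgam} --- the reductions on $\Gamma$, the arithmetic estimates, the point counting, and the Ax--Schanuel input are all unconditional. It is also worth noting why a purely height-theoretic proof seems not to suffice: a finite rank subgroup of $\G$ contains elements of arbitrarily large height, while $f$-special points of discriminant $\Delta$ can have height as large as $\lvert \Delta \rvert^{\epsilon}$, so the Galois-theoretic lower bound on the number of conjugates together with the functional transcendence input appears genuinely necessary.
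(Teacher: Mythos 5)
Your proposal is correct and follows essentially the same route as the paper: the paper likewise proves this by re-running the proof of Proposition~\ref{prop:conjgam} with $n=1$ and, at the final step, replacing the appeal to Condition~\ref{conj:ind} by the non-constancy of $f$, since for $n=1$ the Ax--Schanuel step yields that $f$ itself is multiplicatively dependent modulo constants. The only small point to add is the rank-$0$ case, which the paper dispatches separately via Proposition~\ref{prop:moddep1} (finiteness of $f$-special roots of unity); this keeps the argument unconditional, because the rank-$0$ reduction inside the proof of Proposition~\ref{prop:conjgam} goes through the conditional Proposition~\ref{prop:conjtup}, so your remark that all the reductions on $\Gamma$ there are unconditional needs this minor correction.
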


\begin{proof}
	If $\Gamma$ has rank $0$, then this follows from Proposition~\ref{prop:moddep1}. Otherwise, the proof proceeds identically to the proof of Proposition~\ref{prop:conjgam}. Since $n=1$, one obtains that the function $f(z)$ is multiplicatively dependent modulo constants. One therefore applies at the last step the fact that $f$ is non-constant (rather than Condition~\ref{conj:ind}), in order to obtain the necessary contradiction.
\end{proof}

\section{The Zilber--Pink conjecture in the finite rank case}\label{sec:ZPgam}

\subsection{The Zilber--Pink setting}\label{subsec:ZPgam}
We now put Theorem~\ref{thm:gamdep} in the context of the Zilber--Pink conjecture. 

For $n,k \geq 1$, we set $X =X_{n,n+k} = Y(1)^n \times \G^{n+k}$, $U = U_{n,n+k}= \h^n \times \C^{n+k}$, and let $\pi \colon U \to X$ be given by 
\[\pi(z_1, \ldots, z_n, u_1, \ldots, u_{n+k}) = (j(z_1), \ldots, j(z_n), \exp(u_1), \ldots, \exp(u_{n+k})).\]
We define (weakly) special subvarieties of $U, X$ as in Definition~\ref{def:special}. An atypical component of a subvariety $V \subset X$ is defined as in Definition~\ref{def:atyp}.

For the remainder of this paper, we fix $f \in \alg(j)$ non-constant and write $f(z) = R(j(z))$ for some rational function $R$ with algebraic coefficients. We consider the Zilber--Pink conjecture for the family of subvarieties given by
\[V_{n,k,\bar{b}} = \{(x_1, \ldots, x_n, t_1, \ldots, t_n, b_1, \ldots, b_k) \in X_{n, n+k} : t_i = R(x_i) \mbox{ for } i=1, \ldots, n\},\]
where $n, k \geq 1$ and $\bar{b} = (b_1, \ldots, b_k) \in \G^k$. The statement of the Zilber--Pink conjecture is as follows.

\begin{conj}[Zilber--Pink conjecture]\label{conj:ZPgam}
	Let $n, k \geq 1$ and $\bar{b}=(b_1, \ldots, b_k) \in \G^k$. Then there are only finitely many maximal atypical components of $V_{n,k,\bar{b}}$.
\end{conj}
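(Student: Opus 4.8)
The plan is to attack Conjecture~\ref{conj:ZPgam} by the Pila--Zannier strategy inside the mixed Shimura variety $X=X_{n,n+k}$, exploiting that $V=V_{n,k,\bar b}$ is the Cartesian power of the graph of $R$ in $Y(1)^n\times\G^n$ times the fixed point $\bar b\in\G^k$. First I would pass to the optimality formalism of Section~\ref{sec:ax}: every maximal atypical component of $V$ is contained in an optimal one, and by Theorem~\ref{thm:AxSopt} — whose hypotheses hold here since Theorem~\ref{thm:AxS} applies verbatim to $X_{n,n+k}$ — every optimal component is geodesic, so its Zariski closure in the uniformisation is weakly special. It then suffices to show that the special subvarieties $T\subset X$ which witness the atypicality of a maximal atypical component fall into finitely many families, and to control the components of $V\cap T$ for each such $T$.

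The second step is to stratify these witnessing $T$ by the shape of their defining conditions, exactly as in the proof of Proposition~\ref{prop:ZP2}. Because $R$ is non-constant and no non-constant modular function is invariant under a subgroup of $\GL$ larger than $\Q^\times\cdot\SL$, a $T$ that imposes only modular-type conditions on the $Y(1)^n$-block, or only multiplicative conditions on the $\G^{n+k}$-block, never meets $V$ atypically; so every atypical $T$ mixes a modular-type condition (fixing some $x_i$ to a $j$-special point, or a modular relation $\Phi_N(x_i,x_{i'})=0$) with a multiplicative relation among the $t_i=R(x_i)$ and the $b_j$. Writing $\Gamma=\langle b_1,\dots,b_k\rangle$, the resulting maximal atypical components are then of three kinds: (i) isolated points arising from $\Gamma$-dependent tuples of distinct $f$-special points and their sub-tuples, finite in number by Proposition~\ref{prop:conjgam} and an analogue of Lemma~\ref{lem:atyp} (valid whenever $f$ satisfies Condition~\ref{conj:ind}, in particular when $f$ satisfies the divisor condition, by Theorem~\ref{thm:modind}); (ii) positive-dimensional components, which by Ax--Schanuel and the non-invariance of $f$ lie inside one of the finitely many obvious weakly special subvarieties of $V$ that equate two $Y(1)$-coordinates; and (iii) ``modular--torsion''-type points, where some $x_i$ are joined by a modular relation without being $j$-special while the $t_i,b_j$ satisfy a multiplicative relation, whose finiteness is an analogue of Proposition~\ref{prop:modpairs}, proved by the Pila--Wilkie counting argument together with the isogeny-degree and Faltings-height bounds imported in Section~\ref{sec:tup}. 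For $n\le 2$ this trichotomy is exhaustive and short, so the conjecture follows in those cases; for general $n$ the same analysis reduces Conjecture~\ref{conj:ZPgam} to the finiteness statements in (iii) at every modular level.

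The main obstacle is (iii) for general $n$, together with the arithmetic half of the strategy as a whole. For a fixed multiplicative relation on a subset of $\{t_1,\dots,t_n,b_1,\dots,b_k\}$ combined with a modular relation of arbitrary level $N$ among the $x_i$, the counting argument needs a lower bound on the number of Galois conjugates of the relevant point of $V\cap T$ that grows faster than the Pila--Wilkie bound, uniformly in $N$, alongside a polynomial height bound; the height side is supplied as in Section~\ref{sec:tup}, but the required large-Galois-orbit estimate — effectively an equidistribution input for isogeny classes of bounded Faltings height — is not available in this generality, and for large $n$ one would in addition need Zilber--Pink-type finiteness for the pure modular varieties $Y(1)^n$ that enter, which is itself open. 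This Galois-orbit step is precisely what keeps the Zilber--Pink conjecture open in general, and I do not see how to supply it here; so the realistic outcome of the plan is Conjecture~\ref{conj:ZPgam} for $n\le 2$ (assuming the divisor condition, so that Condition~\ref{conj:ind} holds via Theorem~\ref{thm:modind}) together with, for general $n$, a conditional reduction to such Galois-orbit estimates.
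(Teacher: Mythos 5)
The statement you are proving is stated in the paper as a conjecture, and the paper itself does not prove it: it establishes only the case $n=1$ unconditionally (Proposition~\ref{prop:ZPgam1}) and the case $n=2$ \emph{conditionally} (Proposition~\ref{prop:condZPgam}), under Condition~\ref{conj:ind} together with an explicit finiteness hypothesis on $(f,\Gamma)$-pairs (Definition~\ref{def:gampair}). Your overall strategy (enumerate the witnessing special subvarieties $T$ by type, handle isolated components via Proposition~\ref{prop:conjgam} and the analogue of Lemma~\ref{lem:atyp}, rule out positive-dimensional components via Ax--Schanuel and the non-invariance of $f$) does match the paper's treatment of $n=1,2$. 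But your step (iii) contains a genuine gap, and it is exactly the point where the paper stops short.

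You assert that the ``modular--torsion''-type components in the finite-rank setting are finite ``by an analogue of Proposition~\ref{prop:modpairs}, proved by the Pila--Wilkie counting argument together with the isogeny-degree and Faltings-height bounds''. No such analogue is available, and the paper explains why at the end of Section~\ref{sec:ZPgam}. In Proposition~\ref{prop:modpairs} the two independent multiplicative conditions force $R(x_1)$ and $R(x_2)$ to be roots of unity; Kronecker's theorem plus the height machine then give bounded logarithmic height for $x_1,x_2$ (and in particular their algebraicity and, via Lemma~\ref{lem:modpairsdeg}, comparable degrees), which is what feeds the Faltings-height and isogeny bounds and makes the Galois-orbit versus Pila--Wilkie comparison work. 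In the finite-rank case the multiplicative relations also involve the generators $b_1,\ldots,b_k$ of $\Gamma$, so one only learns that each $R(x_i)$ is $\Gamma$-dependent: the points $x_i$ need not be algebraic at all (the $b_j$ may be transcendental), and even when they are, no height bound of the Kronecker type is available, so the counting argument collapses. Consequently your claim that the plan delivers Conjecture~\ref{conj:ZPgam} for $n\le 2$ under the divisor condition overreaches: the divisor condition gives Condition~\ref{conj:ind} via Theorem~\ref{thm:modind}, but it does not give finiteness of $(f,\Gamma)$-pairs, and the correct conclusion for $n=2$ is the conditional statement of Proposition~\ref{prop:condZPgam}. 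Only $n=1$ comes out unconditionally (for $f\in\alg(j)$), since there the relevant input is Proposition~\ref{prop:gam1} rather than any pair-counting.
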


 Suppose $\Gamma \leq \G$ is a subgroup of finite, positive rank. If $f$ satisfies Condition~\ref{conj:ind}, then Conjecture~\ref{conj:ZPgam} implies the finiteness of $n$-tuples of distinct $f$-special points that are $\Gamma$-dependent and minimal for this property, i.e. Proposition~\ref{prop:conjgam}. To show this, we first prove the following lemma via an easy modification of the proof of Lemma~\ref{lem:atyp}.

\begin{lem}\label{lem:gamatyp}
Suppose that $f$ satisfies Condition~\ref{conj:ind} and $\Gamma \leq \G$ is a subgroup of finite, positive rank. Then there exists $k \geq 1$ and $\bar{b} \in \G^k$ with the following property: an $n$-tuple $\sigma = (\sigma_1, \ldots, \sigma_n)$ of distinct $f$-special points such that the set $\{\sigma_1, \ldots, \sigma_n\}$ is minimally $\Gamma$-dependent gives rise to a point $\hat{\sigma} \in V_{n, k, \bar{b}}$ such that $\{\hat{\sigma}\}$ is a maximal atypical component of $V_{n, k, \bar{b}}$.
\end{lem}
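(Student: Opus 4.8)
The plan is to follow the proof of Lemma~\ref{lem:atyp} essentially verbatim, treating the trailing $\G^k$ factor of $X_{n,n+k}$ as a block of coordinates on which $V_{n,k,\bar b}$ is constant. First I would fix the data $\bar b$. Since $\Gamma$ has finite rank there is a finitely generated $\Gamma_0\le\Gamma$ such that every element of $\Gamma$ has a positive power in $\Gamma_0$; quotienting out the torsion of $\Gamma_0$ and lifting a basis of the free quotient yields multiplicatively independent elements $b_1,\dots,b_k$ such that every element of $\Gamma_0$ has a positive power in $\langle b_1,\dots,b_k\rangle$. I set $\bar b=(b_1,\dots,b_k)$, with $k$ the rank of $\Gamma_0$. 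The observation underpinning the argument is that a multiplicative relation $\prod_i x_i^{a_i}\prod_j y_j^{c_j}=\zeta$ with $\zeta$ a root of unity becomes, after raising to a suitable power, a relation $\prod_i x_i^{a_i'}=\prod_j y_j^{-c_j'}$ with $(a_i')$ a nonzero multiple of $(a_i)$ when $(a_i)\ne0$; evaluated at $(\sigma_1,\dots,\sigma_n,b_1,\dots,b_k)$ and using $b_j\in\Gamma_0\subseteq\Gamma$, this exhibits $\{\sigma_i:a_i\ne0\}$ as $\Gamma$-dependent, while conversely every $\Gamma$-dependence of the $\sigma_i$ arises in this way with $\zeta=1$, and it cannot involve the $b_j$ alone without contradicting their multiplicative independence.

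Given a minimally $\Gamma$-dependent $n$-tuple $\sigma=(\sigma_1,\dots,\sigma_n)$ of distinct $f$-special points, write $\sigma_i=R(x_i)$ with $x_i$ a $j$-special point and put $\hat\sigma=(x_1,\dots,x_n,\sigma_1,\dots,\sigma_n,b_1,\dots,b_k)\in V_{n,k,\bar b}$. Then $\hat\sigma$ lies in $V_{n,k,\bar b}\cap T$, where $T$ is the special subvariety of $X_{n,n+k}$ obtained by fixing each $Y(1)^n$-coordinate to its ($j$-special) value and imposing on the $\G^{n+k}$-coordinates the primitive form of the relation witnessing the $\Gamma$-dependence of $\sigma$; as $\codim T=n+1$ and $\dim V_{n,k,\bar b}=n$, one gets $\dim(V_{n,k,\bar b}\cap T)=0>n+\dim T-\dim X_{n,n+k}=-1$, so $\hat\sigma$ is atypical. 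It remains to rule out a positive-dimensional atypical component $A\supsetneq\{\hat\sigma\}$, say $A\subseteq V_{n,k,\bar b}\cap(M\times S)$ with $M\times S$ special. Since $V_{n,k,\bar b}$ is (the closure of) the graph of $x\mapsto(R(x),\bar b)$ over $Y(1)^n$, $A$ has positive-dimensional image in the special variety $M$, and each defining multiplicative equation of $S$ restricts on $V_{n,k,\bar b}$ to a relation $\prod_i R(x_i)^{a_i}=\kappa$ with $\kappa$ a fixed constant. As in Lemma~\ref{lem:atyp}, the case in which $S$ is cut out by two independent multiplicative conditions is excluded at once by the observation above together with minimality: eliminating a $\G$-coordinate between two such relations (and the relation defining $T$) produces a $\Gamma$-dependent proper subset of $\{\sigma_1,\dots,\sigma_n\}$. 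Hence $S$ is defined by a single multiplicative relation.

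With $S$ defined by one relation, atypicality of $A$ forces that relation to hold identically on $M$; pulling back along $j$ and parametrising the non-constant coordinates of $M$ via $\GL$-translates $g_\ell z$ of free variables produces a relation $\prod_\ell f(g_\ell z)^{a_\ell}=\kappa$, where $f=R\circ j$. By Condition~\ref{conj:ind} the translates occurring here cannot be pairwise distinct, and since no non-constant modular function is invariant under a subgroup of $\GL$ larger than $\Q^\times\cdot\SL$, two coordinates of $M$, hence two of the $\sigma_i$, are forced to be identically equal, contradicting distinctness — exactly as in Lemma~\ref{lem:atyp}. The only genuinely new possibility is that the defining relation of $S$ involves only coordinates that are constant on $M$ together with the $b_j$-slots, reading $R(c)^a\prod_j b_j^{c_j}=\zeta$ for a $j$-special coordinate $c$ of $M$; the observation above then gives either $a=0$, contradicting the multiplicative independence of $\bar b$, or that $\{R(c)\}$ is a proper $\Gamma$-dependent subset of $\{\sigma_1,\dots,\sigma_n\}$ (when $n\ge2$; for $n=1$ the only special subvarieties of $Y(1)$ are points and $Y(1)$ itself and the claim is immediate), contradicting minimality. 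I expect the only real work to be this last case analysis — verifying that every shape of $S$ that could cut $V_{n,k,\bar b}$ atypically and positive-dimensionally is caught by minimality of the $\Gamma$-dependence, multiplicative independence of $\bar b$, Condition~\ref{conj:ind}, or rigidity of modular functions — but with $\bar b$ chosen as above this is a routine modification of the proof of Lemma~\ref{lem:atyp}.
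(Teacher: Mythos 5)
Your proposal is correct and follows essentially the same route as the paper: choose multiplicatively independent generators $b_1,\ldots,b_k$ of a finitely generated subgroup capturing $\Gamma$ up to powers, exhibit $\hat{\sigma}$ as atypical via the codimension-$(n+1)$ special subvariety fixing the modular coordinates and imposing the multiplicative relation, and then rule out positive-dimensional atypical components exactly as in the proof of Lemma~\ref{lem:atyp}, using minimality of the $\Gamma$-dependence, multiplicative independence of the $b_i$, Condition~\ref{conj:ind}, and the rigidity of modular functions. The paper's own proof is the same argument stated more tersely (it defers the second half to the proof of Lemma~\ref{lem:atyp}), so your extra case analysis is just a fuller write-up of the intended reduction.
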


\begin{proof}
	There is a finitely generated $\Gamma_0 \subset \Gamma$ such that, for every $\gamma \in \Gamma$, there exists $m \geq 1$ with $\gamma^m \in \Gamma_0$. Write $\Gamma_0 = \langle b_1, \ldots, b_k \rangle$ for some $b_1, \ldots, b_k \in \G$. We may assume that the set $\{b_1, \ldots, b_k\}$ is multiplicatively independent. Let $n \geq 1$. Set $X = X_{n, n+k}$ and $V = V_{n, k, \bar{b}}$, where $\bar{b} = (b_1, \ldots, b_k)$.
	
	Let $\sigma = (\sigma_1, \ldots, \sigma_n)$ be an $n$-tuple of distinct $f$-special points such that $\{\sigma_1, \ldots, \sigma_n\}$ is $\Gamma$-dependent, but no proper subset of $\{\sigma_1, \ldots, \sigma_n\}$ is $\Gamma$-dependent. Then 
	\[\sigma_1^{a_1} \cdots \sigma_n^{a_n} b_1^{\alpha_1} \cdots b_k^{\alpha_k} = 1\]
	for some integers $a_i, \alpha_i$ with the $a_i$ not all zero. There exist $j$-special points $x_1, \ldots, x_n$ such that each $\sigma_i = R(x_i)$. The point 
	\[\hat{\sigma}=(x_1, \ldots, x_n, \sigma_1, \ldots, \sigma_n, b_1, \ldots, b_k) \in V\]
	then lies in the intersection of $V$ with the special subvariety $T$ of $X$ defined by
	\[T = \{(\bar{z}, \bar{t}, \bar{u}) \colon z_i = x_i \mbox{ for } i=1,\ldots,n \mbox{ and } \, t_1^{a_1} \cdots t_n^{a_n} u_1^{\alpha_1} \cdots u_k^{\alpha_k}=1 \}.\]
	Here $\codim T = n+1$ and so $\dim T = n+k -1$.  Thus,
	\[\dim V + \dim T - \dim X = n + (n+k -1) - (2n +k) = -1.\]
	So $\sigma$ gives rise to an atypical component $\{\hat{\sigma}\}$ of $V$.
	
	Since Condition~\ref{conj:ind} holds for $f$ and the $b_i$ are multiplicatively independent, one may then show, as in the proof of Lemma~\ref{lem:atyp}, that this atypical point $\hat{\sigma}$ cannot be contained in any atypical component of $V$ of positive dimension. 
\end{proof}

The desired result then follows straightforwardly. 

\begin{prop}\label{prop:ZPgamimplies}
	Suppose Condition~\ref{conj:ind} and Conjecture~\ref{conj:ZPgam} hold for $f$. Let $\Gamma \leq \G$ be a subgroup of finite, positive rank. Then, for each $n \geq 1$, there are only finitely many $n$-tuples $(\sigma_1, \ldots, \sigma_n)$ of distinct $f$-special points such that the set $\{\sigma_1, \ldots, \sigma_n\}$ is $\Gamma$-dependent, but no proper subset of $\{\sigma_1, \ldots, \sigma_n\}$ is $\Gamma$-dependent.
\end{prop}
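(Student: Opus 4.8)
The plan is to mirror the proof of Proposition~\ref{prop:ZPimplies}, substituting Lemma~\ref{lem:gamatyp} for Lemma~\ref{lem:atyp}; the argument is essentially formal once that lemma is in hand.

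First I would fix $n \geq 1$ and apply Lemma~\ref{lem:gamatyp} to the subgroup $\Gamma$: since $f$ satisfies Condition~\ref{conj:ind} and $\Gamma$ has finite, positive rank, the lemma supplies an integer $k \geq 1$ and a tuple $\bar{b} = (b_1, \ldots, b_k) \in \G^k$ --- coming from a finitely generated $\Gamma_0 \subset \Gamma$ with the property that every element of $\Gamma$ has a positive power in $\Gamma_0$ --- such that any $n$-tuple $\sigma = (\sigma_1, \ldots, \sigma_n)$ of distinct $f$-special points with $\{\sigma_1, \ldots, \sigma_n\}$ minimally $\Gamma$-dependent produces a point $\hat{\sigma} \in V_{n,k,\bar{b}}$ for which $\{\hat{\sigma}\}$ is a \emph{maximal} atypical component of $V_{n,k,\bar{b}}$.

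Next I would note that the assignment $\sigma \mapsto \hat{\sigma}$ is injective: by the construction in Lemma~\ref{lem:gamatyp} one has $\hat{\sigma} = (x_1, \ldots, x_n, \sigma_1, \ldots, \sigma_n, b_1, \ldots, b_k)$ with each $\sigma_i = R(x_i)$, so the coordinates of $\hat{\sigma}$ in positions $n+1, \ldots, 2n$ recover $\sigma$. Hence distinct admissible tuples $\sigma$ give rise to distinct maximal atypical components of $V_{n,k,\bar{b}}$. Finally I would invoke Conjecture~\ref{conj:ZPgam} for $V_{n,k,\bar{b}}$ --- legitimate since $n,k \geq 1$ and $\bar{b} \in \G^k$ --- to conclude that $V_{n,k,\bar{b}}$ has only finitely many maximal atypical components, whence there are only finitely many $n$-tuples of distinct $f$-special points that are $\Gamma$-dependent with no proper subset $\Gamma$-dependent.

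I do not expect a genuine obstacle in this proposition: all the work --- the reduction from $\Gamma$ to $\Gamma_0$, so that a relation $\prod \sigma_i^{a_i} = \gamma \in \Gamma$ becomes $\prod \sigma_i^{m a_i} = \gamma^m \in \Gamma_0$ with exponents not all zero, together with the use of Condition~\ref{conj:ind} to rule out positive-dimensional atypical components through $\hat{\sigma}$ --- has been packaged into Lemma~\ref{lem:gamatyp}, whose proof in turn only adapts that of Lemma~\ref{lem:atyp}. The one point I would double-check is that Lemma~\ref{lem:gamatyp} is stated with exactly the minimality hypothesis appearing in Proposition~\ref{prop:ZPgamimplies}, so that no further bookkeeping about minimal $\Gamma$-dependent subsets is needed at this stage.
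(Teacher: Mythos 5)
Your proposal is correct and is essentially the paper's own argument: apply Lemma~\ref{lem:gamatyp} to get, for each minimally $\Gamma$-dependent $n$-tuple of distinct $f$-special points, a maximal atypical component $\{\hat{\sigma}\}$ of $V_{n,k,\bar{b}}$, observe that distinct tuples give distinct points $\hat{\sigma}$, and conclude by Conjecture~\ref{conj:ZPgam}. Your final check is also fine, since Lemma~\ref{lem:gamatyp} is stated exactly for minimally $\Gamma$-dependent sets, so no extra bookkeeping is needed.
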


\begin{proof}
Let $\Gamma_0 \subset \Gamma$ be a finitely generated subgroup such that, for every $\gamma \in \Gamma$, there exists $n \geq 1$ such that $\gamma^n \in \Gamma_0$. Let $\{b_1, \ldots, b_k\}$ be a multiplicatively independent set of generators for $\Gamma_0$. Set $\bar{b} = (b_1, \ldots, b_k) \in \G^k$.

Suppose then that $f$ satisfies Condition~\ref{conj:ind} and Conjecture~\ref{conj:ZPgam}. Let $n \geq 1$. Then Lemma~\ref{lem:gamatyp} shows that any $n$-tuple $\sigma = (\sigma_1, \ldots, \sigma_n)$ of distinct $f$-special points such that the set $\{\sigma_1, \ldots, \sigma_n\}$ is $\Gamma$-dependent and minimal for this property gives rise to a maximal atypical component $\{\hat{\sigma}\}$ of the subvariety $V_{n, k, \bar{b}}$. Further, distinct $n$-tuples $\sigma$ of this kind give rise, in this way, to distinct points $\hat{\sigma} \in V_{n, k, \bar{b}}$. By Conjecture~\ref{conj:ZPgam}, there are only finitely many maximal atypical components of $V_{n, k, \bar{b}}$. Hence, there are only finitely many such $n$-tuples.
\end{proof}

In particular, Theorem~\ref{thm:gamdep} would follow from Conjecture~\ref{conj:ZPgam}. However, we are not able to prove Conjecture~\ref{conj:ZPgam} in general. For $n=1$ though we can prove Conjecture~\ref{conj:ZPgam}. The proof uses Proposition~\ref{prop:gam1}. When $n=2$, we show that Conjecture~\ref{conj:ZPgam} holds if $f$ satisfies Condition~\ref{conj:ind} and also a suitable finiteness assumption for points satisfying certain special conditions is true.

\subsection{The Zilber--Pink conjecture for $n=1$}\label{subsec:ZPgam1}

\begin{prop}\label{prop:ZPgam1}
	Conjecture~\ref{conj:ZPgam} holds for $n=1$.
\end{prop}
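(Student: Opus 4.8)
The plan is to mimic the proof of Proposition~\ref{prop:ZP1} but now in the mixed Shimura variety $X = X_{1, 1+k} = Y(1) \times \G^{1+k}$, where $V = V_{1, k, \bar b} = \{(x, t, b_1, \ldots, b_k) \in X : t = R(x)\}$. Here $\dim V = k$ and $\codim V = 2$. First I would enumerate the possible special subvarieties $T \subset X$ and, for each, ask whether $V$ can meet $T$ atypically, i.e. whether a component $A$ of $V \cap T$ can have $\dim A > \dim V + \dim T - \dim X = \dim T - 2$. Since the $b_i$ are fixed constants, a special subvariety $T$ is cut out by some combination of: a fixed $Y(1)$-coordinate $x = x_0$ with $x_0$ a $j$-special point; and multiplicative conditions on the $\G^{1+k}$-coordinates $(t, u_1, \ldots, u_k)$, where the specialisation $u_i = b_i$ is already imposed by $V$. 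The key simplification is that $V$ lies over the fixed point $\bar b \in \G^k$, so an atypical intersection forces a genuine drop in dimension either in the $x$-coordinate or in the $t$-coordinate.

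The main steps: (1) If $T$ imposes no condition on $x$ and no nontrivial condition on $t$ (only conditions among the $u_i$, which are constant on $V$), then $V \cap T$ is either empty or all of $V$, hence not atypical. (2) If $T$ fixes $x = x_0$ for a $j$-special point $x_0$ but imposes no condition forcing $t$, then a component of $V \cap T$ has dimension $0 = \dim V + \dim T - \dim X$ (since $\dim T = k$, $\dim V + \dim T - \dim X = k + k - (1 + k) \cdot \ldots$ — I would compute: $\dim X = 1 + (1+k) = k+2$, $\dim T = k$, so the typical dimension is $k + k - (k+2) = k - 2$; but a point has dimension $0 \geq k-2$ only forces atypicality when $k \leq 1$, so for $k \geq 2$ such an intersection is typical, and in all cases it is a single point, hence bounded only if finitely many such $x_0$ occur — which they need not, so this is \emph{not} where atypicality comes from unless combined with a $t$-condition). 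The crucial case (3) is when $T$ both fixes $x = x_0$ (a $j$-special point) \emph{and} imposes a multiplicative relation forcing $t = \zeta \cdot \prod b_i^{e_i}$ for integers $e_i$ — equivalently $T$ is defined by $x = x_0$ together with a relation $t^{a} u_1^{\alpha_1} \cdots u_k^{\alpha_k} = \xi$ with $a \neq 0$ and $\xi$ a root of unity. Then $V \cap T$ is nonempty iff $R(x_0)^a \prod b_i^{\alpha_i} = \xi$, i.e. iff $R(x_0)$ lies in the divisible hull of $\langle b_1, \ldots, b_k, \mu_\infty \rangle$ — which is exactly the statement that the $f$-special point $R(x_0)$ is $\Gamma$-dependent (for $\Gamma = \langle b_1, \ldots, b_k\rangle$, or rather its finite-rank saturation). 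Each such $x_0$ gives rise to a $0$-dimensional atypical component $\{\hat\sigma\}$, and by Proposition~\ref{prop:gam1} there are only finitely many $f$-special points that are $\Gamma$-dependent, hence only finitely many such $x_0$, hence only finitely many such components. (4) Finally, one checks that no other configuration of $T$ yields a positive-dimensional atypical component: any $T$ imposing a nontrivial multiplicative condition on $t$ but \emph{not} fixing $x$ would require $R$ to be constant on a positive-dimensional set, which it is not since $R$ is a non-constant rational function; and components arising from $T$ of the form $\{x_0\} \times (\text{subvariety of } \G^{1+k})$ are all points. Assembling these cases, $V_{1,k,\bar b}$ has only finitely many maximal atypical components.

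I expect the bookkeeping of special subvarieties $T$ — keeping straight which coordinates are forced by membership in $V$ versus by membership in $T$ — to be the main nuisance rather than a genuine obstacle; the real content is entirely carried by Proposition~\ref{prop:gam1}, which supplies the finiteness in the one case (case (3) above) that actually produces atypical components. The argument is unconditional precisely because Proposition~\ref{prop:gam1} is unconditional for $n = 1$.
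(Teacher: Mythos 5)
Your overall strategy is the same as the paper's (a case analysis over the special subvarieties $T \subset X_{1,1+k}$, with all the genuine finiteness carried by Proposition~\ref{prop:gam1}, and Proposition~\ref{prop:moddep1} implicitly for the torsion case), but as written there are concrete errors and one missing case. First, the dimensions: $V_{1,k,\bar b}$ is the graph of $x \mapsto (R(x),\bar b)$, so $\dim V = 1$ and $\codim V = k+1$, not $\dim V = k$, $\codim V = 2$. This derails your typicality computations: for $T = \{x_0\}\times \G^{1+k}$ one has $\dim T = k+1$ and $\dim V + \dim T - \dim X = 1+(k+1)-(k+2) = 0$, so the single intersection point is typical for \emph{every} $k$; with your figures you would (wrongly) be forced to treat every $j$-special $x_0$ as an atypical component when $k=1$, and the remark that this "is not where atypicality comes from" papers over a computation that is simply incorrect. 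Second, your case (1) is false as stated: if $\{b_1,\dots,b_k\}$ is multiplicatively dependent, then $V\cap T = V$ for a proper special $T$ and this \emph{is} atypical. The paper deals with this at the outset by observing that in that case $V$ itself is the unique maximal atypical component, and thereafter assumes the $b_i$ multiplicatively independent (and in particular that no $b_i$ is a root of unity) --- an assumption it then uses repeatedly.

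The substantive gap is the case $T = Y(1)\times T_2$ with $T_2 \subset \G^{1+k}$ of codimension at least $2$. There $\dim V + \dim T - \dim X \le -1$, so any nonempty component of $V\cap T$ --- even a single point --- is atypical; your case (4) only rules out positive-dimensional components (via "$R$ constant on a positive-dimensional set"), which is not enough, and a priori each of the infinitely many such $T$ could contribute a new maximal atypical point. The paper closes this case by showing $V\cap T = \emptyset$: any fixed coordinate of $T_2$ would have to be the $t$-coordinate, both defining conditions must then be multiplicative relations involving $t$ with nonzero exponent (since no $b_i$ is a root of unity and the $b_i$ are independent), and eliminating $t$ from two independent such relations yields a multiplicative dependence among $b_1,\dots,b_k$, a contradiction. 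You need this step (or an equivalent finiteness argument) for the proof to be complete. By contrast, your case (3) --- $T = \{x_0\}\times T_2$ with $T_2$ proper, reduced to the statement that $R(x_0)$ is $\Gamma$-dependent and hence to Proposition~\ref{prop:gam1}, with the fixed-$t$ (root of unity) subcase subsumed --- is correct and matches the paper.
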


\begin{proof}
	Fix $k \geq 1$ and $\bar{b}=(b_1, \ldots, b_k) \in \G^k$. We look for the atypical components of the subvariety
	\[V = \{(x, t, \bar{b}) \colon t=R(x)\} \subset X = Y(1) \times \G^{1+k}.\]
	So $\dim V = 1$ and $\dim X = k+2$. If the set $\{b_1, \ldots, b_k\}$ is multiplicatively dependent, then $V$ is itself atypical and hence the only maximal atypical component. So we may assume that the set $\{b_1, \ldots, b_k\}$ is multiplicatively independent; in particular, no $b_i$ is a root of unity.
	
	Consider the possible special subvarieties $T$ of $X$. Clearly $X$ cannot itself intersect $V$ atypically, so we look only at proper special subvarieties. We may write $T = T_1 \times T_2$, where $T_1$ is an special subvariety of $Y(1)$ and $T_2$ is a special subvariety of $\G^{1+k}$. 
	
	Look first at those $T$ where $T_1$ is a proper special subvariety. The condition on $T_1$ must be a fixed coordinate. So $T= \{x\} \times T_2$, where $x$ is a $j$-special point and $T_2$ is a special subvariety of $\G^{k+1}$. If $T= \{x \} \times \G^{1+k}$, then $V \cap T = \{(x, R(x), b_1, \ldots, b_k)\}$, which is not atypical since
	\[\dim V + \dim T - \dim X = 1 + (k+1) - (k+2)=0.\]
	So we must have that $T= \{x\} \times T_2$ with $T_2$ a proper special subvariety of $\G^{1+k}$. Then $\dim T \leq k$, and so the intersection $V \cap T$ is atypical if it is non-empty.
	
	Suppose for now that $T_2$ has at least one fixed coordinate (i.e. a root of unity). If $V \cap T$ is non-empty, then this fixed coordinate $\zeta$ of $T_2$ must be in the first of the $\G^{1+k}$ coordinates, since no $b_i$ is a root of unity. We then require for $V \cap T \neq \emptyset$ also that $\zeta = R(x)$, and so $\zeta$ is both $f$-special and a root of unity. By Proposition~\ref{prop:moddep1}, there are at most finitely many such points. Thus, there are only finitely many atypical components of this form.
	
	So we may suppose that $T_2$ has no fixed coordinate. Then some multiplicative relation must hold on $T_2$ since $T_2$ is proper. Then $V \cap T$ is non-empty (and hence atypical) only if $R(x), b_1, \ldots, b_k$ satisfy this multiplicative relation, which must involve $R(x)$ since the $b_i$ are multiplicatively independent. Note that $R(x)$ is an $f$-special point. By Proposition~\ref{prop:gam1} applied to the finite rank group $\Gamma = \langle b_1, \ldots, b_k \rangle$, there are only finitely many $f$-special points $\sigma$ which satisfy a multiplicative relation over $b_1 ,\ldots, b_k$. So there are only finitely many such $R(x)$ to consider and hence only finitely many such atypical components.
	
	It thus remains to consider those cases where $T_1=Y(1)$. So $T = Y(1) \times T_2$, where $T_2$ must be a proper special subvariety of $\G^{k+1}$ since $T \subsetneq X$. Then
	\[V \cap T = \{ (t, R(t), b_1, \ldots, b_k) \colon (R(t), b_1, \ldots, b_k) \in T_2\}.\]
The subvariety $T_2$ of $\G^{1+k}$ is proper, so either some coordinate is fixed in $T_2$ or a multiplicative relation holds. If $V \cap T \neq \emptyset$, then any fixed coordinate in $T_2$ must be the first coordinate since no $b_i$ is a root of unity. Hence in this case $V \cap T$ must be finite. Now suppose some multiplicative relation holds on $T_2$. Since the $b_i$ are multiplicatively independent, if $V \cap T \neq \emptyset$, then this multiplicative relation must involve the first coordinate of $T_2$. Write $t_1^{a_1} \cdots t_{k+1}^{a_{k+1}}=1$ for this relation (so $a_1 \neq 0$). The equation
\[t^{a_1} b_1^{a_2} \cdots b_k^{a_{k+1}}=1\]
has only finitely many solutions $t$. So $V \cap T$ is finite.

In either of these cases, $\dim (V \cap T) = 0$. So the components of the intersection $V \cap T$ are then atypical only if
\begin{align*}
	0 &> \dim V + \dim T - \dim X \\
	&= 1 + (1 + \dim T_2) - (k+2) \\
	&= \dim T_2 -k,
	\end{align*}
i.e. if $\dim T_2 < k$. So at least two independent conditions must hold on $T_2$. Any fixed coordinate condition must apply to the first coordinate, as otherwise $V \cap T$ would be empty since no $b_i$ is a root of unity. Since the two conditions are independent, at least one of them must therefore be a multiplicative relation. Further, if $V \cap T \neq \emptyset$, then this multiplicative relation must involve the first coordinate since the $b_i$ are multiplicatively independent. This then rules out the possibility of the first coordinate of $T_2$ being fixed because that would then imply the existence of a multiplicative relation among the $b_i$ if $V \cap T \neq \emptyset$. So we must have that the second condition is also a multiplicative relation, and this multiplicative relation must again involve the first coordinate of $T_2$ for the same reason as before. So we have $T_2$ defined by conditions
\[t^{a} t_1^{a_1} \cdots t_k^{a_k} =1,\]
\[t^{a'} t_1^{a_1'} \cdots t_k^{a_k'} =1\]
with $a, a' \neq 0$. If $V \cap T \neq \emptyset$, then there is some $t$ such that
\[t^{a} b_1^{a_1} \cdots b_k^{a_k} =1,\]
\[t^{a'} b_1^{a_1'} \cdots b_k^{a_k'} =1.\]
Since the two multiplicative conditions are independent we may then eliminate $t$ to get a multiplicative dependence among $b_1, \ldots, b_k$, a contradiction. Hence there are no atypical components of this form, and the proof is complete.
\end{proof}

\subsection{The Zilber--Pink conjecture for $n=2$}\label{subsec:ZPgam2}

In this subsection, we show that Conjecture~\ref{conj:ZPgam} for $V_{2,k,\bar{b}}$ follows from Condition~\ref{conj:ind} together with a finiteness statement for pairs $(x_1, x_2) \in Y(1)^2$ satisfying certain special conditions. 

\begin{definition}\label{def:gampair}
Let $\Gamma \leq \G$. We define an $(f,\Gamma)$-pair to be a pair $(x_1, x_2) \in Y(1)^2$ such that $(x_1, x_2)$ satisfies a modular relation, $R(x_1)$ is $\Gamma$-dependent, and $R(x_2)$ is $\Gamma$-dependent.
\end{definition}

We now prove the following conditional version of Conjecture~\ref{conj:ZPgam} for $n=2$.

\begin{prop}\label{prop:condZPgam}
Suppose that $f$ satisfies Condition~\ref{conj:ind}. Let $k \geq 1$ and $\bar{b}=(b_1, \ldots, b_k) \in \G^k$. Set $\Gamma = \langle b_1, \ldots, b_k \rangle \leq \G$.  Suppose that there are only finitely many $(f, \Gamma)$-pairs $(x_1, x_2)$ with $x_1 \neq x_2$ and $x_1, x_2$ not $j$-special. Then Conjecture~\ref{conj:ZPgam} holds for $V_{2, k, \bar{b}}$.
\end{prop}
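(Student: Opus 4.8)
The plan is to imitate the case analysis of Proposition~\ref{prop:ZP2}, now working in $X = X_{2,k+2} = Y(1)^2 \times \G^{k+2}$ with $V = V_{2,k,\bar b}$, so that $\dim X = k+4$ and $\dim V = 2$. Write $\Gamma = \langle b_1, \ldots, b_k\rangle$. If the set $\{b_1, \ldots, b_k\}$ is multiplicatively dependent, then $V$ is itself atypical and is the unique maximal atypical component, so we may assume $b_1, \ldots, b_k$ are multiplicatively independent; in particular no $b_i$ is a root of unity. A special subvariety $T = T_1 \times T_2 \subset X$ (with $T_1 \subset Y(1)^2$, $T_2 \subset \G^{k+2}$ special) intersects $V$ atypically only if some component $A \subset V \cap T$ satisfies $\dim A > \dim V + \dim T - \dim X = \dim T_1 + \dim T_2 - (k+2)$. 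One runs through the possibilities for $T_1$ (all of $Y(1)^2$; one fixed coordinate; one modular relation; two fixed coordinates; a fixed coordinate and a modular relation) and for $T_2$, organising by $\operatorname{codim} T$ exactly as in Proposition~\ref{prop:ZP2}.

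The key simplifications are as follows. First, since no $b_i$ is a root of unity and the $b_i$ are multiplicatively independent, any fixed-coordinate or multiplicative condition imposed by $T_2$ that is to give $V \cap T \neq \emptyset$ must involve the first two $\G$-coordinates (the $R(x_i)$-coordinates) nontrivially; a condition purely among the last $k$ coordinates forces $V \cap T = \emptyset$, and two independent such conditions involving only the $b_i$ contradict multiplicative independence. Second, when $T_1$ fixes both $Y(1)^2$-coordinates to $j$-special points $x_1, x_2$ and $T_2$ imposes one multiplicative relation involving $R(x_1), R(x_2), b_1, \ldots, b_k$, the point $(x_1,x_2,R(x_1),R(x_2),\bar b)$ is exactly a $2$-tuple of $f$-special points that is $\Gamma$-dependent; Proposition~\ref{prop:conjgam} (available since $f$ satisfies Condition~\ref{conj:ind}) gives finiteness of the minimally $\Gamma$-dependent such pairs, and Lemma~\ref{lem:minmult}/Lemma~\ref{lem:gamatyp}-style bookkeeping handles the non-minimal or coincident cases — with the subcase $R(x_1) = R(x_2)$, $x_1 \neq x_2$ ruled out finite by André's theorem as in Proposition~\ref{prop:ZP2}, and the subcase where one $R(x_i)$ is $\Gamma$-dependent alone absorbed into lower-codimension components via Proposition~\ref{prop:gam1}. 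Third, when $T_1$ imposes a single modular relation and $T_2$ imposes enough multiplicative conditions, Condition~\ref{conj:ind} forces (after taking images under $f$ and arguing that no non-constant modular function is invariant under a subgroup of $\GL$ larger than $\Q^\times \cdot \SL$) the relevant $\G$-coordinates to coincide, reducing to the diagonal special subvariety $\{(t_1,t_1,t_2,t_2,\bar b)\}$, which contributes a single positive-dimensional atypical component.

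The genuinely new case — and the one the hypothesis is tailored to — is $T_1$ a single modular relation $\Phi_N(x_1,x_2)=0$ together with two independent multiplicative conditions on $T_2$ that we may take to be fixed coordinates in the first two $\G$-slots, i.e. $R(x_1) = \zeta_1$, $R(x_2)=\zeta_2$ with $\zeta_i$ roots of unity (equivalently, $\Gamma$-dependence of $R(x_1)$ and of $R(x_2)$ in the $\bar b$-generalised sense). Here the component $\{(x_1,x_2,\zeta_1,\zeta_2,\bar b)\}$ is zero-dimensional and atypical, and $(x_1,x_2)$ is an $(f,\Gamma)$-pair; if $x_1 = x_2$ one lands in the diagonal component, if $x_1$ or $x_2$ is $j$-special one reduces to finitely many $f$-special roots of unity via Proposition~\ref{prop:moddep1}, and otherwise finiteness is exactly the hypothesis. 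The main obstacle is purely bookkeeping: verifying that every special $T$ of every codimension either fails to meet $V$, meets $V$ typically, is contained in one of the finitely many positive-dimensional atypical components already produced (the diagonal, and those from $f$-special roots of unity), or produces a zero-dimensional component of one of the finitely many types controlled by Propositions~\ref{prop:conjgam}, \ref{prop:gam1}, \ref{prop:moddep1} and the $(f,\Gamma)$-pair hypothesis. Assembling this finite list shows $V_{2,k,\bar b}$ has only finitely many maximal atypical components, which is Conjecture~\ref{conj:ZPgam} for $n=2$.
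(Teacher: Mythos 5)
Your proposal follows essentially the same route as the paper's proof: a case analysis over special subvarieties $T\subset Y(1)^2\times\G^{2+k}$ organised by codimension, using the multiplicative independence of the $b_i$ to force every relevant multiplicative condition to involve the first two $\G$-coordinates, Condition~\ref{conj:ind} together with the fact that no non-constant modular function is invariant under a subgroup of $\GL$ larger than $\Q^{\times}\cdot\SL$ to isolate the diagonal component, Propositions~\ref{prop:moddep1}, \ref{prop:gam1}/\ref{prop:conjgam} and Andr\'e's theorem for the fixed-coordinate and positive-dimensional cases, and the $(f,\Gamma)$-pair hypothesis for the codimension-three case of one modular relation plus two multiplicative conditions. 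One caveat: your claim that the two independent multiplicative conditions ``may be taken to be fixed coordinates in the first two $\G$-slots'', i.e. $R(x_i)=\zeta_i$ roots of unity, is not a valid reduction --- relations genuinely involving the $b_i$ cannot be normalised that way, and being a root of unity is not equivalent to being $\Gamma$-dependent once $\Gamma$ has positive rank; however, since your parenthetical and your conclusion actually work with the correct general statement (each $R(x_i)$ is individually $\Gamma$-dependent, obtained by eliminating coordinates via the independence of the $b_i$), the argument is unaffected. A genuine, harmless difference from the paper: where the paper treats the subcase in which both multiplicative conditions are fixed roots of unity separately via Proposition~\ref{prop:modpairs} (modular--torsion tuples), you absorb it into the $(f,\Gamma)$-pair hypothesis, which is legitimate because a root of unity is trivially $\Gamma$-dependent, so every modular--torsion tuple yields an $(f,\Gamma)$-pair with $x_1\neq x_2$ and neither coordinate $j$-special.
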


\begin{proof}
	We look for the atypical components of 
	\[V = \{(x_1, x_2, t_1, t_2, b_1, \ldots, b_k) \colon t_1 = R(x_1), t_2 = R(x_2)\} \subset X = Y(1)^2 \times \G^{2+k}.\]
	So $\dim V = 2$ and $\dim X = k+4$. A special subvariety $T$ of $X$ gives rise to an atypical component of $V$ if
	\[\dim (V \cap T) > \dim V + \dim T - \dim X = 2 - \codim T.\]
	
	If the set $\{b_1, \ldots, b_k\}$ is multiplicatively dependent, then $V$ is itself atypical and hence the only maximal atypical component. Thus we may assume that the set $\{b_1, \ldots, b_k\}$ is multiplicatively independent. The group $\Gamma$ is of finite rank and so, by Proposition~\ref{prop:conjgam}, for every $n \geq 1$ there are only finitely many $n$-tuples of distinct $f$-special points which are $\Gamma$-dependent and minimal for this property.
	
	Consider the possible special subvarieties $T$ of $X$. We find those $T$ that give rise to atypical components of $V$. We split into cases according to $\codim T$. 
	
	\begin{enumerate}[wide, labelwidth=!, labelindent=0pt]
		
		\item Clearly $X$ cannot itself intersect $V$ atypically, so we only need to look at proper special subvarieties. \\[\parskip]
		
		\item Suppose $\codim T = 1$. Then some component of $V \cap T$ is atypical if and only if $\dim (V \cap T) \geq 2$. If $T$ is defined by one fixed multiplicative coordinate, then $\dim (V \cap T)$ is either $0$ or $1$ depending on whether this fixed coordinate is one of the first two $\G$ coordinates or not. In either case, the intersection is not atypical. If $T$ is defined by specifying one fixed modular coordinate, then $\dim (V \cap T)=1$ and the intersection is not atypical. Similarly if $T$ is defined by a single modular relation. So the only case left is where $T$ is defined by a multiplicative relation. Then, since the $b_i$ are multiplicatively independent, this multiplicative relation must involve at least one of the first two $\G$ coordinates if $V \cap T \neq \emptyset$. But then $\dim (V \cap T )= 1$ and the intersection is not atypical.\\[\parskip]
		
		\item Next we look at $\codim T=2$. Then a component of $V \cap T$ is atypical if and only if it is positive dimensional. Clearly then $T$ cannot be defined by two independent modular conditions. Suppose $T$ is defined by two independent multiplicative conditions (either fixed coordinates or multiplicative relations). If $V \cap T \neq \emptyset$, then each of these conditions must involve at least one of the first two $\G$ coordinates since the $b_i$ are multiplicatively independent. In all such cases, one then sees that $V \cap T$ must be finite and hence its components cannot be atypical. 
		
		So $T$ must be defined by one modular condition and one multiplicative condition. If both conditions are fixed coordinates, then $V \cap T$ is positive dimensional only if the two conditions either both apply to the respective first coordinate or both apply to the respective second coordinate, and they also satisfy $\zeta = R(x)$, where $\zeta$ is the multiplicative fixed coordinate and $x$ is the fixed modular coordinate. In such cases, $\zeta$ is both an $f$-special point and a root of unity, and so by Proposition~\ref{prop:conjtup} there are only finitely many atypical components of this kind. 
		
		If $T$ is defined by a modular relation and a multiplicative relation, then, by Condition~\ref{conj:ind} and the multiplicative independence of the $b_i$, the intersection $V \cap T$ cannot be positive dimensional unless the multiplicative relation has the form $x=y$. Since no non-constant modular function is invariant under a larger subgroup of $\GL$ than $\Q^{\times} \cdot \SL$, the modular relation must also be of the form $x=y$. So there is just one such atypical component.
		
		Suppose $T$ is defined by a modular relation and a fixed multiplicative coordinate. Then this fixed coordinate must be one of the first two $\G^{2+k}$ coordinates. But then the modular relation on the $Y(1)^2$ coordinates implies that $V \cap T$ is finite and so not atypical.
		
		If $T$ is defined by a multiplicative relation and a fixed modular coordinate, then $V \cap T$ is positive dimensional only if the multiplicative relation involves the $\G^{2+k}$ coordinate corresponding to the fixed $Y(1)^2$ coordinate, but not the other of the first two $\G^{2+k}$ coordinates. If the fixed modular coordinate is given by the $j$-special point $x$, then the $f$-special point $R(x)$ is $\Gamma$-dependent since it satisfies a multiplicative relation over the $b_i$. Hence, by Proposition~\ref{prop:conjgam}, there are only finitely many atypical components of this form to consider.\\[\parskip]
		
		\item When $\codim T =3$, the components of the intersection $V \cap T$ are atypical if and only if $V \cap T \neq \emptyset$. If all three conditions defining $T$ are fixed coordinates, then $V \cap T \neq \emptyset$ implies that $V \cap T$ must already be contained in one of the positive dimensional atypical components arising from $T$ defined by a fixed modular coordinate and a fixed multiplicative coordinate. So we may assume that at least one condition defining $T$ is a relation. 
		
		If there were three independent multiplicative conditions defining $T$, then $V \cap T \neq \emptyset$ would imply a multiplicative relation among the $b_i$, which is impossible. Clearly one cannot have three independent conditions on the $Y(1)^2$ coordinates. So there must be at least one modular condition and at least one multiplicative condition defining $T$.
		
		If there are two independent modular conditions, then we may assume these are fixed coordinates. The other condition must then be a multiplicative relation, and this relation must involve at least one of the first two $\G^{2+k}$ coordinates if $V \cap T \neq \emptyset$ since the $b_i$ are multiplicatively independent. If the multiplicative relation involves only one of the first two $\G^{2+k}$ coordinates, then the corresponding fixed modular coordinate $x$ gives rise to a $\Gamma$-dependent $f$-special point $R(x)$. We are thus in one of the positive-dimensional atypical components arising from a special subvariety of codimension $2$. 
		
		If the multiplicative relation involves both the first two $\G^{2+k}$ coordinates, then the fixed modular coordinates $x_1, x_2$ give rise to a $\Gamma$-dependent pair of $f$-special points $(R(x_1), R(x_2))$. If some $R(x_i)$ is $\Gamma$-dependent, then we are in one of the previously identified atypical components. We may thus assume that no subtuple of $(R(x_1), R(x_2))$ is $\Gamma$-dependent. Since there are only finitely many pairs of $j$-special points $(x_1, x_2)$ with $x_1 \neq x_2$ and $R(x_1)=R(x_2)$, as shown in the proof of Proposition~\ref{prop:ZP2}, we may also assume that $R(x_1) \neq R(x_2)$. It thus follows from Proposition~\ref{prop:conjgam} that there are at most finitely many maximal atypical components of this form.
		
		So we may suppose that $T$ is defined by two multiplicative conditions and one modular condition. If the multiplicative conditions are both fixed coordinates, then the modular condition is a relation. We thus have that $V \cap T = \{(x_1, x_2, \zeta_1, \zeta_2, b_1, \ldots, b_k)\}$, where $(x_1, x_2)$ satisfies a modular relation, $\zeta_1, \zeta_2$ are roots of unity, and $\zeta_i = R(x_i)$ for $i=1,2$. By Proposition~\ref{prop:modpairs}, there are only finitely many such components satisfying the additional restrictions $x_1 \neq x_2$ and neither $x_1$ nor $x_2$ is $j$-special. If these additional restrictions are not met, then we are in one of the previously identified atypical components. 
		
		So we may assume that at least one of the multiplicative conditions is a relation; clearly, this relation must involve at least one of the first two $\G^{2+k}$ coordinates. Suppose the modular condition is a fixed coordinate $x$. If the second multiplicative condition is a fixed coordinate $\zeta$, then clearly this must be one of the first two $\G^{2+k}$ coordinates. Further since $\zeta$ is a root of unity, we can eliminate this coordinate from the multiplicative relation. If $V \cap T \neq \emptyset$, then, according to whether the two fixed coordinates are in the same respective position or not, either $\zeta = R(x)$ is both a root of unity and $f$-special or $(R(x), b_1, \ldots, b_k)$ satisfy the multiplicative relation and so $R(x)$ is a $\Gamma$-dependent $1$-tuple. Thus such components are contained in positive dimensional atypical components arising from special subvarieties of codimension $2$.
		
		So now suppose the modular condition is a fixed coordinate $x$ and both the multiplicative conditions are relations. If $V \cap T \neq \emptyset$, then both relations must involve at least one of the first two $\G^{2+k}$ coordinates. They cannot both involve only the first (respectively the second) of the first two $\G^{2+k}$ coordinates, since by their independence we would then be able to obtain a relation among the $b_i$. If both relations involve both the first two $\G^{k+2}$ coordinates, then we may eliminate either of these two coordinates. Thus we may assume that the first relation involves the first but not the second $\G^{2+k}$ coordinate and that the second relation involves the second but not the first $\G^{2+k}$ coordinate. The components of $V \cap T$ are therefore contained in some of the already identified positive dimensional atypical components.
		
		We thus reduce to considering when the modular condition is a relation and at least one of the multiplicative conditions is a relation. Since the $b_i$ are multiplicatively independent, if $V \cap T \neq \emptyset$, then any component of $V \cap T$ must have the form $\{(x_1, x_2, R(x_1), R(x_2), b_1, \ldots, b_k)\}$ where $(x_1, x_2)$ satisfies a modular relation and $R(x_1), R(x_2)$ are both (individually) $\Gamma$-dependent. Any such $(x_1, x_2)$ is an $(f, \Gamma)$-pair. The finiteness of the resulting maximal atypical components then follows from the assumption on $(f, \Gamma)$-pairs in the hypotheses of the proposition. This is because if $x_1 = x_2$ or some $x_i$ is $j$-special, then the corresponding component of $V \cap T$ is already contained in one of the atypical components above.
		\\[\parskip]
		
		\item We now consider the case when $\codim T \geq 4$. If $T$ is defined by $\geq 3$ independent multiplicative conditions and $V \cap T \neq \emptyset$, then we can eliminate the first two $\G^{2+k}$ coordinates from these relations and obtain a multiplicative dependency among $b_1, \ldots, b_k$. This obviously cannot happen. Clearly, there can also be no more than two independent  modular conditions defining $T$. Thus the only case to consider is when $T$ is defined by two modular conditions and two multiplicative conditions.  
		
		The components of the intersection $V \cap T$ are then atypical if and only if $V \cap T \neq \emptyset$. We may assume that the two modular conditions are both fixed coordinates. If one of the multiplicative conditions is a fixed coordinate, then this must be one of the first two $\G^{2+k}$ coordinates. If $V \cap T \neq \emptyset$, then the root of unity corresponding to this fixed coordinate must also be an $f$-special point because the respective modular coordinate is $j$-special. In this case, the intersection is contained in one of the already identified positive dimensional atypical components. Therefore we may assume that both the multiplicative conditions are relations. The components that can arise here are thus all contained in larger atypical components identified in (3).	\qedhere
	\end{enumerate}
\end{proof}

We note here the similarity between the finiteness assumption on $(f, \Gamma)$-pairs contained in the hypotheses of Proposition~\ref{prop:condZPgam} and the finiteness statement for modular--torsion tuples in Proposition~\ref{prop:modpairs}. In both cases, one is dealing with the atypical components which arise from one modular relation and two independent multiplicative relations. However, in the case of Proposition~\ref{prop:modpairs} we are able to prove finiteness, whereas in Proposition~\ref{prop:condZPgam} we must assume it.

The difference is that in Proposition~\ref{prop:modpairs} the points $R(x_1), R(x_2)$ are roots of unity, since if $R(x_1), R(x_2)$ satisfy two independent multiplicative relations then they must both be roots of unity. One thus obtains that $x_1, x_2$ are algebraic and their heights are bounded, which is crucial for the finiteness proof. In contrast, for an $(f, \Gamma)$-pair $(x_1, x_2)$ the multiplicative relations satisfied by $R(x_1), R(x_2)$ involve also the generators $b_1, \ldots, b_k$ of $\Gamma$. One thus obtains only the weaker condition of each $R(x_i)$ being $\Gamma$-dependent, rather than a root of unity. In particular, we thus do not seem able to obtain suitable bounds on the heights of $x_1, x_2$ (they might not even be algebraic) in order to prove the finiteness of the pairs $(x_1, x_2)$ by a modified version of the argument for Proposition~\ref{prop:modpairs}. 

\bibliographystyle{amsplain}
\bibliography{refs}

\providecommand{\bysame}{\leavevmode\hbox to3em{\hrulefill}\thinspace}
\providecommand{\MR}{\relax\ifhmode\unskip\space\fi MR }
% \MRhref is called by the amsart/book/proc definition of \MR.
\providecommand{\MRhref}[2]{%
  \href{http://www.ams.org/mathscinet-getitem?mr=#1}{#2}
}
\providecommand{\href}[2]{#2}
\begin{thebibliography}{10}

\bibitem{AdamusRand13}
J.~Adamus and S.~Randriambololona, \emph{Tameness of holomorphic closure
  dimension in a semialgebraic set}, Math. Ann. \textbf{355} (2013), no.~3,
  985--1005.

\bibitem{Andre98}
Y.~Andr\'{e}, \emph{Finitude des couples d'invariants modulaires singuliers sur
  une courbe alg\'{e}brique plane non modulaire}, J. Reine Angew. Math.
  \textbf{505} (1998), 203--208.

\bibitem{Ax71}
J.~Ax, \emph{On {S}chanuel's conjectures}, Ann. of Math. (2) \textbf{93}
  (1971), 252--268.

\bibitem{BiluLucaMasser17}
Yu. Bilu, F.~Luca, and D.~Masser, \emph{Collinear {CM}-points}, Algebra \&
  Number Theory \textbf{11} (2017), no.~5, 1047--1087.

\bibitem{BiluLucaMadariaga16}
Yu. Bilu, F.~Luca, and A.~Pizarro-Madariaga, \emph{Rational products of
  singular moduli}, J. Number Theory \textbf{158} (2016), 397--410.

\bibitem{BombieriGubler06}
E.~Bombieri and W.~Gubler, \emph{Heights in {D}iophantine geometry}, New
  Mathematical Monographs, vol.~4, Cambridge University Press, Cambridge, 2006.

\bibitem{BombieriMasserZannier07}
E.~Bombieri, D.~Masser, and U.~Zannier, \emph{Anomalous
  subvarieties---structure theorems and applications}, Int. Math. Res. Not.
  IMRN (2007), no.~19.

\bibitem{Borcherds95a}
R.~Borcherds, \emph{Automorphic forms on
  {${\mathrm{O}}_{s+2,2}(\mathbb{R})^{+}$} and generalized {K}ac--{M}oody
  algebras}, Proceedings of the {I}nternational {C}ongress of {M}athematicians,
  {V}ol. 2 ({Z}\"{u}rich, 1994), Birkh\"{a}user, Basel, 1995, pp.~744--752.

\bibitem{Borcherds95}
\bysame, \emph{Automorphic forms on {${\rm O}_{s+2,2}({\mathbb{R}})$} and
  infinite products}, Invent. Math. \textbf{120} (1995), no.~1, 161--213.

\bibitem{Brink04}
D.~Brink, \emph{On alternating and symmetric groups as {G}alois groups}, Israel
  J. Math. \textbf{142} (2004), 47--60.

\bibitem{CharlesPoonen16}
F.~Charles and B.~Poonen, \emph{Bertini irreducibility theorems over finite
  fields}, J. Amer. Math. Soc. \textbf{29} (2016), no.~1, 81--94.

\bibitem{Cox89}
D.~Cox, \emph{Primes of the form {$x^2 + ny^2$}}, John Wiley \& Sons, Inc., New
  York, 1989.

\bibitem{vandenDries98}
L.~van~den Dries, \emph{Tame topology and o-minimal structures}, London
  Mathematical Society Lecture Note Series, vol. 248, Cambridge University
  Press, Cambridge, 1998.

\bibitem{vandenDriesMiller94}
L.~van~den Dries and C.~Miller, \emph{On the real exponential field with
  restricted analytic functions}, Israel J. Math. \textbf{85} (1994), no.~1-3,
  19--56.

\bibitem{GrossZagier85}
B.~Gross and D.~Zagier, \emph{On singular moduli}, J. Reine Angew. Math.
  \textbf{355} (1985), 191--220.

\bibitem{Habegger15}
P.~Habegger, \emph{Singular moduli that are algebraic units}, Algebra Number
  Theory \textbf{9} (2015), no.~7, 1515--1524.

\bibitem{HabeggerPila12}
P.~Habegger and J.~Pila, \emph{Some unlikely intersections beyond
  {A}ndr\'{e}--{O}ort}, Compos. Math. \textbf{148} (2012), no.~1, 1--27.

\bibitem{HabeggerPila16}
\bysame, \emph{O-minimality and certain atypical intersections}, Ann. Sci.
  \'Ec. Norm. Sup\'er. (4) \textbf{49} (2016), no.~4, 813--858.

\bibitem{HindrySilverman00}
M.~Hindry and J.~Silverman, \emph{Diophantine geometry}, Graduate Texts in
  Mathematics, vol. 201, Springer-Verlag, New York, 2000.

\bibitem{LoherMasser04}
T.~Loher and D.~Masser, \emph{Uniformly counting points of bounded height},
  Acta Arith. \textbf{111} (2004), no.~3, 277--297.

\bibitem{Ono04a}
K.~Ono, \emph{The web of modularity: arithmetic of the coefficients of modular
  forms and {$q$}-series}, CBMS Regional Conference Series in Mathematics, vol.
  102, Published for the Conference Board of the Mathematical Sciences,
  Washington, DC; by the American Mathematical Society, Providence, RI, 2004.

\bibitem{Pila09}
J.~Pila, \emph{On the algebraic points of a definable set}, Selecta Math.
  (N.S.) \textbf{15} (2009), no.~1, 151--170.

\bibitem{Pila14a}
\bysame, \emph{Special point problems with elliptic modular surfaces},
  Mathematika \textbf{60} (2014), no.~1, 1--31.

\bibitem{PilaTsimerman19}
J.~Pila and J.~Tsimerman, \emph{Independence of {CM} points in elliptic
  curves}, to appear in J. Eur. Math. Soc. (JEMS).

\bibitem{PilaTsimerman16}
\bysame, \emph{Ax--{S}chanuel for the {$j$}-function}, Duke Math. J.
  \textbf{165} (2016), no.~13, 2587--2605.

\bibitem{PilaTsimerman17}
\bysame, \emph{Multiplicative relations among singular moduli}, Ann. Sc. Norm.
  Super. Pisa Cl. Sci. (5) \textbf{17} (2017), no.~4, 1357--1382.

\bibitem{PilaWilkie06}
J.~Pila and A.~Wilkie, \emph{The rational points of a definable set}, Duke
  Math. J. \textbf{133} (2006), no.~3, 591--616.

\bibitem{Pink05}
R.~Pink, \emph{A combination of the conjectures of {M}ordell--{L}ang and
  {A}ndr\'{e}--{O}ort}, Geometric methods in algebra and number theory, Progr.
  Math., vol. 235, Birkh\"{a}user Boston, Boston, MA, 2005, pp.~251--282.

\bibitem{Spence17}
H.~Spence, \emph{A note on the degree of field extensions involving classical
  and nonholomorphic singular moduli}, 2017, preprint, arXiv:1702.01950.

\bibitem{Zilber02}
B.~Zilber, \emph{Exponential sums equations and the {S}chanuel conjecture}, J.
  London Math. Soc. (2) \textbf{65} (2002), no.~1, 27--44.

\end{thebibliography}

\end{document}